\documentclass[12pt]{amsart}
\usepackage[utf8]{inputenc}
\usepackage{amsmath,amsthm,amsfonts,amssymb,amsxtra}
\usepackage{mathrsfs}

\usepackage{ amsmath, amsthm, amsfonts, amssymb, color}
\usepackage{mathrsfs}
\usepackage{amsfonts, amsmath}
\usepackage{amsmath,amstext,amsthm,amssymb,amsxtra}
\usepackage{txfonts} %also pxfonts
\usepackage[colorlinks, citecolor=blue,pagebackref,hypertexnames=false]{hyperref}
\allowdisplaybreaks
\usepackage{pgf,tikz}
\usetikzlibrary{calc}
\usepackage{caption} % for \captionof
\usepackage{enumitem}

\newcommand\Neu{\operatorname{Neu}}

\newcommand\LapoN{\Delta^{(0, \infty)}_{\Neu, d}}

\newcommand\D{\mathcal{D}}
\newcommand\C{\mathbb{C}}

\newcommand\R{\mathbb{R}}

\newcommand\Lapd{\Delta_{d_1, d_2}}

\newcommand\Lapdd{\Delta_{d}}
\newcommand{\supp}{{\rm supp}{\hspace{.05cm}}}

\newtheorem{theorem}{Theorem}[section]
\newtheorem{proposition}[theorem]{Proposition}
\newtheorem{coro}[theorem]{Corollary}
\newtheorem{lemma}[theorem]{Lemma}

\newtheorem{remark}[theorem]{Remark}

\begin{document}

\title{Herz versus Fefferman: Symmetric and asymmetric Bochner--Riesz theory}
\author{Peng Chen, Dangyang He, Adam Sikora, Lixin Yan}  
\address{Peng Chen, Department of Mathematics, Sun Yat-sen (Zhongshan)
University, Guangzhou, 510275, P.R. China}
\email{achenpeng1981@163.com}
\address{Dangyang He, Department of Mathematics, Sun Yat-sen (Zhongshan) University, Guangzhou, 510275, P.R. China}
\email{hedy28@mail.sysu.edu.cn, dangyang.he@hdr.mq.edu.au}
\address{Adam Sikora, School of Mathematical and Physical Sciences, Macquarie University, NSW 2109, Australia}
\email{adam.sikora@mq.edu.au}
\address{Lixin Yan, Department of Mathematics, Sun Yat-sen (Zhongshan) University,
Guangzhou, 510275, P.R. China}
\email{mcsylx@mail.sysu.edu.cn}

\subjclass[2020]{Primary 42B15; Secondary 42B20, 35P05, 47B90.}

\keywords{Bochner--Riesz means, spectral projections, radial Laplacians,
	manifolds with ends, Herz theorem, Fefferman ball multiplier theorem,
	non-doubling spaces.}

\begin{abstract}
We study Bochner--Riesz summability for a one-dimensional model of
noncompact manifolds with ends. Each end has an effective Euclidean
dimension, and these dimensions may differ from one end to
another. The central point is the contrast between the symmetric and
asymmetric cases. When the end dimensions agree, the model follows
Herz's radial theory for the Euclidean Laplacian. When they are unequal,
a Fefferman-type obstruction appears, analogous to the ball
multiplier obstruction in higher-dimensional Fourier analysis, even
though the model itself is one-dimensional. We give a complete
characterisation of the \(L^p\)-boundedness of the corresponding spectral
projections and Bochner--Riesz means. In the asymmetric case, the
boundedness range contains an additional restriction depending on the
difference between the end dimensions. This restriction is absent from
Herz's radial model and shows that Bochner--Riesz summability on spaces
with unequal ends is governed not only by the maximal end dimension, but
also by the interaction between the ends.

\end{abstract}

\maketitle

\hypersetup{linkcolor=black}
\tableofcontents
\hypersetup{linkcolor=red}

\section{Introduction}

%{\color{blue}

%\noindent
This project concerns Bochner--Riesz summability for a one-dimensional model of
manifolds with ends. The guiding idea goes back to Herz~\cite{Herz}, who studied
boundedness properties of spectral projections of the Laplace operator acting on
radial functions. In the radial setting, the analysis reduces to a family of
one-dimensional operators.

%\noindent
Let \(d \ge 1\) and consider the Hilbert space \(L^2(\R_+, r^{d-1}\,dr)\).
For \(f,g\in C_c^\infty[0,\infty)\) define the quadratic form
\begin{equation}\label{Q1}
Q_d^{(0,\infty)}(f,g)
:=\int_0^\infty f'(r)\,\overline{g'(r)}\, r^{d-1}\,dr .
\end{equation}
The (radial) Laplace operator \(\LapoN\) %(denoted also just by $\Delta_d$ below) 
is defined as the self-adjoint operator
associated with \(Q_d^{(0,\infty)}\) via the Friedrichs construction, i.e.\ as the
generator of the closed form obtained by completing \(C_c^\infty[0,\infty)\) in the
form norm.

We shall investigate a ``manifold with ends'' analogue. Let
\[
\widetilde{\R}:=(-\infty,-1]\cup[1,\infty),
\]
and fix parameters $d_1,d_2>1$. Define the weight
\[
\sigma_{d_1,d_2}(r):=
\begin{cases}
|r|^{d_1-1}, & r\leq -1,\\
r^{d_2-1}, & r\geq 1.
\end{cases}
\]
We work in the space $L^2(\widetilde{\R},\sigma_{d_1,d_2}(r)\,dr)$.

Next set 
\[
\mathcal D_0
:=
\left\{
f\in C_c^\infty((-\infty,-1]\cup[1,\infty)):
f(-1)=f(1)
\right\}.
\]
For $f,g\in \mathcal D_0$ we define the quadratic form
\begin{align}\label{Q2}
\widetilde{Q}_{d_1,d_2}(f,g)
&:=\int_{\widetilde{\R}} f'(r)\,\overline{g'(r)}\,\sigma_{d_1,d_2}(r)\,dr \nonumber\\
&=\int_{-\infty}^{-1} f'(r)\,\overline{g'(r)}\,|r|^{d_1-1}\,dr
+\int_{1}^{\infty} f'(r)\,\overline{g'(r)}\,r^{d_2-1}\,dr .
\end{align}
We denote the closure of this form by $\widetilde{Q}_{d_1,d_2}$ as well. 
The operator $\Delta_{d_1,d_2}$ is then defined as the self-adjoint operator associated with the form $\widetilde{Q}_{d_1,d_2}$. 
In particular, in the symmetric case $d_1=d_2=d$, we use the notation $\Lapdd=\Delta_{d,d}$.
Note that if $f\in C_c^{\infty}(a,b)$ for $1<a<b$, then
$\Delta_{d_1,d_2}f(x)=-f''(x)-\frac{d_2-1}{x}f'(x)$, and a similar expression is valid with $d_1$ if $a<b<-1$.
Also note that if $f$ is in the domain of $\widetilde{Q}_{d_1,d_2}$,
then $f(1)=f(-1)$. If $f$ is in the domain of $\Delta_{d_1,d_2}$,
then, in addition, $f'(1)=f'(-1)$.

The operator \(\Delta_{d_1,d_2}\) provides a surprisingly robust one-dimensional
model for the analysis of manifolds with ends. From the perspective of the
present project, a convenient reference for the definition of manifolds with
ends and for the classical analytic framework on such spaces is the work of
Grigor'yan and Saloff-Coste~\cite{GS}; see also~\cite{GIS}.

% The study of the boundedness of the Riesz transform in this one dimensional model setting was initiated
% by Hassell and the third author in~\cite{HS1D} in the symmetric case \(d_1=d_2\),
% and subsequently extended to the asymmetric case \(d_1\neq d_2\) in
% \cite{HE_phd,Nix}. As expected, for the Riesz transform these one-dimensional
% model results coincide with the corresponding results for general manifolds with
% ends; see~\cite{HS,HNS,HE_phd}.

 The study of the Riesz transform in this one-dimensional model setting was initiated by Hassell and the third author in~\cite{HS1D}, where the symmetric case \(d_1=d_2\) was analysed. The asymmetric case \(d_1\neq d_2\) was subsequently treated in~\cite{HE_phd,Nix}. As expected, the resulting boundedness properties mirror those established for Riesz transforms on general manifolds with ends; see~\cite{HS,HNS,HE_phd}.

%\medskip

 We expect an analogous correspondence to hold, at least in part, for
Bochner--Riesz summability. The necessary conditions suggested by the model
appear quite plausible. However, one should be more cautious about expecting a
complete parallel in the positive direction. Proving sufficiency in the general
manifolds-with-ends setting remains open and may be technically demanding,
especially because the sufficiency problem touches on the more delicate, and
still unresolved, aspects of Bochner--Riesz theory. Nevertheless, our
investigations suggest that a general sufficiency theorem in this geometric
setting may still be more accessible than the classical Bochner--Riesz
conjecture.

%\noindent
As a first step, following a standard approach, we recall the functional calculus
associated with spectral decompositions. Let \((X,\mu)\) be a measure space and let
\(L\) be a non-negative self-adjoint operator on \(L^2(X)\). Then \(L\) admits a
spectral resolution \(\{E_L(\lambda)\}_{\lambda\ge 0}\). For any bounded Borel
function \(F:[0,\infty)\to\C\), one defines
\begin{equation}\label{eq1.1}
F(L)=\int_{0}^{\infty} F(\lambda)\, dE_L(\lambda),
\end{equation}
where the integral is understood in the sense of the spectral theorem. In
particular, \(F(L)\) is a well-defined bounded operator on \(L^2(X)\).

Spectral multiplier theorems provide sufficient conditions on \(F\) ensuring that
\(F(L)\) extends to a bounded operator on \(L^p(X)\) for some range of \(p\).
In this project, however, we restrict attention to the model Laplacians
\(\LapoN\) and \(\Delta_{d_1,d_2}\), and to the associated spectral projections
for their square roots, namely
\[
E_{\sqrt{\LapoN }}\big([0,R)\big)
\qquad\text{and}\qquad
E_{\sqrt{\Delta_{d_1,d_2}}}\big([0,R)\big),\qquad R>0.
\]

In what follows, we restrict attention to Bochner--Riesz spectral multipliers and
to spectral projections, which may be viewed as Bochner--Riesz means of order
$0$. For $R>0$ and $\delta\geq 0$, define the Bochner--Riesz cutoff
\[
\sigma_R^\delta(\lambda):=
\begin{cases}
\bigl(1-\lambda/R^2\bigr)^\delta, & 0\leq \lambda\leq R^2,\\[2mm]
0, & \lambda>R^2,
\end{cases}
\]
where, for $\delta=0$, we interpret $\sigma_R^0$ as the characteristic
function of $[0,R^2]$, and set
\begin{equation}\label{delBR}
\sigma_R^\delta(L):=
\int_{0}^{\infty}\sigma_R^\delta(\lambda)\,dE_L(\lambda),
\end{equation}
where the operator is defined via the spectral calculus \eqref{eq1.1}.
In the sequel, we often write the corresponding operator as
\[
\sigma_R^\delta(L)
=
\Bigl(1-\tfrac{L}{R^2}\Bigr)_+^\delta
=
\Bigl(1-\tfrac{(\sqrt{L})^2}{R^2}\Bigr)_+^\delta.
\]

The Bochner--Riesz means originate in Bochner's study
\cite{Bochner1936} of spherical summation of multiple Fourier series.
In the Euclidean setting, the Fourier transform diagonalises
\(-\Delta_{\mathbb R^d}\), and the corresponding means are given by the
radial multipliers
$\left(1-{|\xi|^2}/{R^2}\right)_+^\delta$.

A central problem in Bochner--Riesz theory is to determine, for each
\(1\leq p\leq\infty\), the critical index \(\delta_{\mathrm{cr}}(p)\) for which
\[
\sup_{R>0}\|\sigma_R^\delta(L)\|_{L^p\to L^p}<\infty
\qquad\text{whenever}\qquad
\delta>\delta_{\mathrm{cr}}(p).
\]
For \(1\leq p<\infty\), under the standard density assumptions, this uniform
boundedness is equivalent to the convergence
\(\sigma_R^\delta(L)f\to f\) in \(L^p\) as \(R\to\infty\).

The literature on Bochner--Riesz summability is vast. Useful starting points
include \cite{MiCh,chSo,CaS,TT,wang2025, LeeS}, together with the classical accounts
in \cite[Chapter
IX]{Stein},\cite[Chapter 5.2]{Sogge} and the references therein.

Our first result concerns Bochner--Riesz means of order \(0\), that is,
spectral projections. In the symmetric case \(d_1=d_2=d\), the
\(L^p\)-boundedness range for the projections associated with
\(\Delta_{d_1,d_2}=\Delta_d\) coincides exactly with Herz's range for the
radial Laplacian \(\LapoN\) in~\cite{Herz}. The same range, namely
\(\left|\frac{1}{p}-\frac{1}{2}\right|< \frac{1}{2d}\), appears in
C\'ordoba's closely related positive results for the disc multiplier
in~\cite{cord}. By contrast, when \(d_1\neq d_2\), the behaviour is
governed by a Fefferman-type obstruction~\cite{Fefferman}, which becomes
dominant.

%To avoid lengthy and repetitive calculations, we assume that $d_1,d_2>2$.

To keep the scope of the paper manageable, we assume that \(d_i>2\) for \(i=1,2\). See Question~Q1 in the Open Problems Section~\ref{oppr} below, for further discussion.

\begin{theorem}\label{main_spectral_projection}
Let $d_1,d_2>2$. Then the spectral projection $E_{\sqrt{\Lapd}}([0,R))$ ($R>0$) is bounded uniformly in $L^p(\widetilde{\R})$, i.e.,
\begin{align*}
    \sup_{R>0} \left\|  \mathbf{1}_{[0,R)}\left(\sqrt{\Lapd}\right)   \right\|_{p\to p} < \infty
\end{align*}
if and only if 
\begin{align*}
    \begin{cases}
        \left|\frac{1}{2}-\frac{1}{p}\right|<\frac{1}{2d}, & d_1=d_2=d,\\
        p=2, & d_1\ne d_2.
    \end{cases}
\end{align*}
\end{theorem}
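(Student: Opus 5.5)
We plan to prove Theorem~\ref{main_spectral_projection} through an explicit spectral representation of $\Lapd$ by generalized eigenfunctions. On each end, solutions of $-u''-\frac{d_i-1}{r}u'=\lambda^2u$ are $r^{-\nu_i}J_{\pm\nu_i}(\lambda r)$ (or $r^{-\nu_i}J_{\nu_i}, r^{-\nu_i}Y_{\nu_i}$), where $\nu_i=\frac{d_i-2}{2}>0$. The matching conditions $u(1)=u(-1)$ and $u'(1)=u'(-1)$ leave a two-dimensional space of generalized eigenfunctions for each $\lambda>0$. We take an orthonormal basis $\phi_\lambda^{(1)},\phi_\lambda^{(2)}$ with respect to the spectral measure, normalized through the Weyl--Titchmarsh/Kodaira formalism, so that each $\phi^{(j)}_\lambda$ is asymptotically an incoming/outgoing wave on one end combined with a transmitted and reflected wave on the other. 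Then
\[
\mathbf 1_{[0,R)}(\sqrt{\Lapd})(x,y)=\sum_{j}\int_0^R \phi^{(j)}_\lambda(x)\overline{\phi^{(j)}_\lambda(y)}\,d\lambda .
\]
Scaling does not reduce the problem to $R=1$, since the junction at $|r|=1$ breaks dilation invariance. We will therefore track the scattering coefficients $a_{jk}(\lambda)$ uniformly: they tend to constants as $\lambda\to\infty$ (high-frequency transmission at a smooth junction), and as $\lambda\to0$ they follow power laws $\lambda^{\pm(\nu_1-\nu_2)}$-type behaviour, with asymmetric powers when $d_1\neq d_2$.

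\emph{Sufficiency, symmetric case.} When $d_1=d_2=d$, the reflection $r\mapsto -r$ commutes with $\Delta_d$. This splits $L^2$ into even and odd parts, and each part is a half-line operator on $[1,\infty)$ with weight $r^{d-1}$ and a Neumann or Dirichlet condition at $1$. For large $\lambda r$ the eigenfunctions are $r^{-(d-1)/2}\cos(\lambda r+\theta(\lambda))$ plus lower order terms, exactly as in Herz's radial setting. We will decompose the kernel into a region $\lambda|x|,\lambda|y|\lesssim1$ (handled by size estimates), a region where one variable is small, and an oscillatory region. There the leading term is a Hilbert-transform type kernel $\frac{\sin R(x-y)}{x-y}\,(xy)^{-(d-1)/2}$ together with $\frac{\sin R(x+y)}{x+y}$ terms. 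Boundedness on $L^p(r^{d-1}dr)$ then reduces to weighted boundedness of the Hilbert transform with power weight $r^{(d-1)(1-p/2)}$, which is an $A_p$ weight exactly when $|\frac1p-\frac12|<\frac1{2d}$. Necessity in this case follows by testing on a function concentrated near a point at scale $1/R$, whose image decays like $|x|^{-(d-1)/2}$; this fails to be $L^p$-integrable outside that range (Herz's counterexample), and the same test works here.

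\emph{Necessity of $p=2$ when $d_1\neq d_2$.} This is the Fefferman-type obstruction and the main step. Fix $p\neq2$; by duality we may assume $p>2$. Because the junction is fixed while $R$ varies, we will exploit the low-frequency regime. Take $R\to0$, or equivalently rescale so that the junction sits at $1/R\to\infty$ in rescaled units. In the limit the operator looks like a transmission problem between cones of different dimension at the origin: the generalized eigenfunctions behave like $|r|^{-\nu_i}J_{\pm\nu_i}(\lambda r)$, and the matching produces transmission coefficients whose modulus is not identically one and whose phase depends on $\lambda$ through $\lambda^{\nu_1-\nu_2}$-type quantities.

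The plan is then a Besicovitch/Kakeya-free version of Fefferman's argument. Modulate the input by a sum of wave packets at frequencies $\lambda_k$ near the cutoff $R$. Because of the mismatch in decay rates $|x|^{-(d_1-1)/2}$ versus $|y|^{-(d_2-1)/2}$ across the two ends, the transmitted part of the kernel $x^{-(d_2-1)/2}|y|^{-(d_1-1)/2}\frac{e^{iR(x-y)}}{x-y}$ type term is bounded on $L^p(\sigma)$ only if the weights satisfy a two-weight $A_p$ condition. That condition reads $(d_2-1)(\frac12-\frac1p)$ balanced against $(d_1-1)(\frac12-\frac1p)$, and it forces $\frac1p=\frac12$ when $d_1\neq d_2$. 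Concretely, we will take $f=\mathbf 1_{[-N-1,-N]}e^{-iR r}|r|^{-(d_1-1)/p}$ on the $d_1$ end and compute $\mathbf 1_{[0,R)}(\sqrt{\Lapd})f$ on the $d_2$ end for $x\in[N,2N]$. This should give a ratio of norms growing like $N^{|d_1-d_2|\,|\frac12-\frac1p|}$, uniformly for large $R$, via a two-weight Hilbert transform lower bound (logarithmic divergence at worst, power here).

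The hard step is the uniform asymptotics of the transmission coefficient and of the eigenfunctions near $|r|=1$, needed to show that the transmitted term has a nonvanishing leading coefficient with no cancellation against the reflected term. We will handle this by showing that the high-frequency transmission coefficient tends to $1$ in modulus, using WKB for $u''+\frac{d-1}{r}u'+\lambda^2u=0$ with the continuity conditions. Sufficiency at $p=2$ is trivial.
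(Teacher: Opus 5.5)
There is a genuine gap in the asymmetric necessity, which is the heart of the theorem.

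\textbf{The cross-end kernel is not of Hilbert-transform type.} For $x$, $y$ on different ends, $|x-y|=|x|+|y|\ge 2$. For $R=1$, the leading part of the cross-end kernel at large $|x|,|y|$ is
\[
\frac{|x|^{-\frac{d_\ell-1}{2}}|y|^{-\frac{d_j-1}{2}}}{|x|+|y|}\,e^{\mp i(|x|+|y|)}.
\]
This kernel is non-singular and its phase factorises. So, up to unimodular multipliers, each term is a positive Hardy--Hilbert-type operator. The obstruction is a homogeneity mismatch. The kernel has degree $-\frac{d+D}{2}$, while boundedness from $L^p(r^{D-1}dr)$ on the larger end into $L^p(r^{d-1}dr)$ on the smaller end requires $\frac{d+D}{2}\ge\frac dp+\frac D{p'}$. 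This is the condition $p(\alpha+\beta-n_1)\ge n_2-n_1$ of Lemma~\ref{Nix}, and it amounts to $p\le 2$.

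\textbf{Your test function cannot detect this.} The mismatch is only seen by test functions spread over an interval of length comparable to their distance from the junction. Your $f=\mathbf 1_{[-N-1,-N]}e^{-iRr}|r|^{-(d_1-1)/p}$ is not. The modulus of the kernel is essentially constant at scale $N$, so concentrating $f$ on a unit interval loses a factor $N^{-1/p'}$. Even in the favourable orientation the norm ratio is $N^{(D-d)(\frac12-\frac1p)-\frac1{p'}}$. This tends to $0$ for $p$ near $2$, and for every $p>2$ when $D-d\le 2$.

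You must also fix the orientation. For $p>2$ the blow-up goes from the larger end into the smaller one. The main term of the opposite block is bounded for $2\le p<\frac{2D}{D-1}$. Your two-weight $A_p$ heuristic gives the right exponent only because the Muckenhoupt characteristic averages over intervals of length $\sim N$.

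\textbf{The paper's choice.} It takes $f_\varepsilon(y)=e^{-iy}\mathbf 1_{[\varepsilon,2\varepsilon]}(y)\,y^{-\frac{D-1}{2}}$ on the larger end and measures the output on $[2\varepsilon,3\varepsilon]$ of the smaller end. The matched phase gives $\gtrsim\varepsilon^{-\frac{d-1}{2}}$. The conjugate phase gives only $O(\varepsilon^{-\frac{d+1}{2}})$ after one integration by parts. The resulting ratio is $\varepsilon^{(D-d)(\frac12-\frac1p)}\to\infty$.

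\textbf{The low-frequency programme is beside the point.} Taking $R\to 0$, tracking transmission phases of type $\lambda^{\nu_1-\nu_2}$, and using WKB are unnecessary. One value of $R$ suffices: the paper shows $E_{\mathrm{off}}(1)$ is already unbounded for every $p\ne 2$, and the effect comes from spatial infinity at fixed $R$. Your two descriptions of the rescaling (junction at $1/R\to\infty$ versus a transmission problem at the origin) are also inconsistent. Nor is there a reflected wave that could cancel the transmitted one. The block $M_{\chi_{(-\infty,-1]}}E_{\sqrt{\Lapd}}([0,R))M_{\chi_{[1,\infty)}}$ has kernel built only from $\lambda A(\pm i\lambda)k_\ell k_j$. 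Its endpoint coefficient is a nonzero multiple of $A(\pm iR)$, which never vanishes. So the step you flag as hard is automatic.

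\textbf{Symmetric necessity: wrong exponent.} The image of a localised function decays like $|x|^{-\frac{d+1}{2}}$, not $|x|^{-\frac{d-1}{2}}$, because the sharp cutoff at $\lambda=R$ contributes an extra factor $|x|^{-1}$. With your exponent the argument would contradict boundedness on $L^2$. With the correct exponent the test shows failure for $p\le\frac{2d}{d+1}$, provided the leading coefficient, proportional to $\langle f,\phi_R\rangle$, is nonzero. Self-adjointness then gives failure for $p\ge\frac{2d}{d-1}$. This is a valid alternative to the paper's logarithmic example $e^{iy}y^{-d/p}\mathbf 1_{(2,\varepsilon)}(y)$.

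\textbf{Symmetric sufficiency.} Your even/odd reduction to Neumann and Dirichlet half-line problems with $A_p$-weighted Hilbert transforms is a reasonable alternative to the paper's $kl$/$kk$ splitting. (The paper uses Herz for $kl$, and explicit kernel bounds with Lemma~\ref{Nix} and Hardy--Littlewood--P\'olya for $kk$.) However, the real work is uniformity in $R$ near the junction, for instance the terms with denominator $x+y-2$ and the low-energy region. Your proposal leaves this unaddressed.
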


%\noindent
Our main result establishes the $L^p$-continuity (i.e.\ uniform boundedness) of Bochner--Riesz means of arbitrary order. In the symmetric case $d_1=d_2$, the result coincides with the range predicted by Herz's theorem. In contrast, when $d_1\neq d_2$, we obtain a strengthened Fefferman-type obstruction, which rules out even very weak forms of the Bochner--Riesz conjecture in this setting.
Throughout the discussion of the asymmetric case $d_1\neq d_2$, we write
\[
D=d_1\vee d_2,\qquad d=d_1\wedge d_2.
\]

We continue to assume that \(d_i>2\) for \(i=1,2\).

\begin{theorem}\label{main_BR}
Let $d_1,d_2>2$. The family of Bochner--Riesz means $\sigma_{R}^\delta(\Lapd)$ ($R>0$) is uniformly bounded on $L^p(\widetilde{\R})$, i.e.,
\begin{align*}
    \sup_{R>0} \left\| \left( 1 - \frac{\Lapd}{R^2} \right)_+^{\delta}  \right\|_{p\to p} < \infty,
\end{align*}
if and only if
\begin{align*}
    \delta > \max \left( D\left|\frac{1}{p}-\frac{1}{2}\right|-\frac{1}{2},\, 0 \right),
\end{align*}
and
\begin{align}\label{asym}
    \delta \ge |d_1-d_2| \left|\frac{1}{p}-\frac{1}{2}\right|.
\end{align}
%where $D = d_1 \vee d_2$.
\end{theorem}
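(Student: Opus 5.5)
The plan is to build the spectral resolution of $\Lapd$ explicitly from Bessel functions and then analyse the Bochner--Riesz kernel directly. On each end, generalized eigenfunctions of $-\partial_r^2-\frac{d_i-1}{r}\partial_r$ with eigenvalue $\lambda^2$ are combinations of $|r|^{-\nu_i}J_{\nu_i}(\lambda|r|)$ and $|r|^{-\nu_i}Y_{\nu_i}(\lambda|r|)$, where $\nu_i=\frac{d_i-2}{2}$. The matching conditions $f(1)=f(-1)$ and $f'(1)=f'(-1)$ leave a two-dimensional generalized eigenspace for each $\lambda$. Its coefficients are computed from Wronskians, and the spectral measure $dE_{\sqrt{\Lapd}}(\lambda)$ is then obtained from Stone's formula via the resolvent, which is built from the decaying solutions $|r|^{-\nu_i}H^{(1)}_{\nu_i}(\lambda|r|)$ on each end. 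The resulting spectral density $dE(\lambda)(x,y)$ has the following structure. For $\lambda\gtrsim 1$ and $|x|,|y|\gtrsim 1/\lambda$ it is a sum of oscillatory terms
\[
\lambda^{\,\cdot}\,|x|^{-(d_i-1)/2}|y|^{-(d_j-1)/2}\, e^{\pm i\lambda(|x|\pm|y|)}\,a(\lambda,x,y),
\]
with symbol-type amplitudes $a$. The transmission terms (with $x,y$ on different ends) carry asymmetric weights. For $\lambda\lesssim 1$ we need low-energy asymptotics, and here $d_i>2$ is used to ensure that $J_\nu$, $Y_\nu$ behave well near $0$ and that there is no zero resonance.

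\textbf{Sufficiency.} Using the density, the kernel of $\sigma_R^\delta(\Lapd)$ is $\int(1-\lambda^2/R^2)_+^\delta\,dE(\lambda)(x,y)$. By scaling, one may essentially take $R=1$, but the ends are not scale invariant because of the junction at $\pm1$, so all $R$ must be handled uniformly. I would split the kernel into a near-diagonal part, on each end separately, and a far part. The near-diagonal part behaves like the radial Euclidean Bochner--Riesz kernel in dimension $d_i$. Herz's theory, or a direct Hardy/Schur test with weights $r^{d_i-1}$, gives boundedness for $\delta>\max(d_i|\frac1p-\frac12|-\frac12,0)$. Integrating the $(1-\lambda^2)_+^\delta$ singularity against $e^{i\lambda(|x|-|y|)}$ gives decay $(1+||x|-|y||)^{-1-\delta}$, and weighted Schur/Hardy estimates then yield the $D$-condition. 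The far and transmission parts have kernels of size roughly
\[
|x|^{-(d_i-1)/2}|y|^{-(d_j-1)/2}(1+||x|-|y||)^{-1-\delta}.
\]
For $i\ne j$, the mismatch of measures converts this into the operator $T$ whose boundedness $L^p(r^{d_j-1})\to L^p(r^{d_i-1})$ near $|x|\approx|y|$ requires the factor $|x|^{(d_j-d_i)(\frac1p-\frac12)}$ to be absorbed. This is exactly condition \eqref{asym}. The equality case $\delta=|d_1-d_2||\frac1p-\frac12|$ should be allowed because the remaining localized operator is a Hardy-type average that stays bounded. I would verify this with a dyadic decomposition in $|x|\sim 2^k$ and Schur's test.

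\textbf{Necessity.} For the $D$-condition, test on $L^p$ functions supported on the larger end far from the junction. There the kernel reproduces Herz's radial kernel, whose $L^p$ norm on $|y|\gg 1$ fails when $\delta\le D|\frac1p-\frac12|-\frac12$. For \eqref{asym}, test $f=\mathbf 1_{[N,N+1]}$ on one end and measure the transmitted output near $|x|\approx N$ on the other end. The $L^p$ ratio grows like $N^{|d_1-d_2||\frac1p-\frac12|-\delta}$ up to constants, which is unbounded if $\delta$ is smaller. Duality gives the other range of $p$. Theorem~\ref{main_spectral_projection} is then the case $\delta=0$, so this part can be cross-checked against it.

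The main obstacle is the precise asymptotics of the transmission coefficients. We need uniformity in $\lambda$ across the transition $\lambda|r|\sim1$, and we must confirm that the leading transmission amplitude does not vanish, since the necessity argument depends on it. I would handle this first, using Hankel asymptotics together with an exact Wronskian computation at $r=\pm1$.
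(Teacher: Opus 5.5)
Your overall architecture (explicit spectral density, a same-end part handled by Herz's radial theory, power-weight Schur/Hardy tests) broadly matches the paper's. The gap is in the cross-end (transmission) block, which is exactly where \eqref{asym} comes from. There you use the wrong kernel geometry, and both halves of your argument for \eqref{asym} fail. For $x,y$ on different ends the outgoing and incoming resolvents are $A(\pm i\lambda)k_\ell(\pm i\lambda|x|)k_j(\pm i\lambda|y|)$. Their oscillatory factors depend only on $|x|+|y|$, the length of the path through the junction; no $|x|-|y|$ phase occurs in this block. So the transmission part of $\sigma_1^\delta(\Lapd)$ has leading term $c\,|x|^{-\frac{d_\ell-1}{2}}|y|^{-\frac{d_j-1}{2}}(|x|+|y|)^{-1-\delta}e^{\mp i(|x|+|y|)}$ and no singularity at $|x|=|y|$. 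Your bound with $(1+||x|-|y||)^{-1-\delta}$ is true but cannot give sufficiency. On $\{|x|\sim 2^k,\ ||x|-|y||\le 1\}$ it gains nothing from $\delta$, and the operator with kernel $|x|^{-\frac{d_i-1}{2}}|y|^{-\frac{d_j-1}{2}}\mathbf{1}_{\{||x|-|y||\le 1\}}$ from $L^p(r^{d_j-1}dr)$ to $L^p(r^{d_i-1}dr)$ has norm $\approx 2^{k(d_j-d_i)(\frac12-\frac1p)}$ on that set. This is unbounded in $k$ in one of the two directions whenever $p\neq 2$, for every $\delta$. So the factor $|x|^{(d_j-d_i)(\frac1p-\frac12)}$ cannot be absorbed near the diagonal. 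What actually happens is that on the whole block $|x|\sim|y|\sim 2^k$ the kernel has size $2^{-k(\frac{d_i+d_j}{2}+\delta)}$, so the block acts as an average of norm $\approx 2^{k[(d_j-d_i)(\frac12-\frac1p)-\delta]}$. The blocks are then summed using homogeneity of degree $-\frac{d_i+d_j}{2}-\delta$ (Lemmas~\ref{He} and~\ref{Nix}). Their integrability conditions give the $D$-range, and their scaling condition $p(\alpha+\beta-n_1)\ge n_2-n_1$ is exactly \eqref{asym}, endpoint included.

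The same misidentification breaks your necessity test. With the correct kernel, $f=\mathbf{1}_{[N,N+1]}$ on the $D$-end gives $|Tf(x)|\approx N^{\frac{D-d}{2}-1-\delta}$ for $x\in[N,2N]$ on the $d$-end. For $p>2$ the norm ratio is then $N^{(D-d)(\frac12-\frac1p)-\delta-\frac1{p'}}$, which gives unboundedness only for $\delta<(D-d-1)(\frac12-\frac1p)-\frac12$. That threshold lies strictly below the line $\delta=D(\frac12-\frac1p)-\frac12$, so this test never detects a point where \eqref{asym} is the only violated condition. With your kernel, the same test would give $N^{(D-d)(\frac12-\frac1p)}$ independently of $\delta$, contradicting your own sufficiency claim.

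The missing idea is coherent summation over a full dyadic interval. Take $f_\varepsilon(y)=e^{\pm iy}y^{-\frac{D-1}{2}}\mathbf{1}_{[\varepsilon,2\varepsilon]}(y)$ on the $D$-end, with the sign chosen to cancel the phase of a leading term that has nonzero coefficient, and measure on $[3\varepsilon,4\varepsilon]$ on the $d$-end. Then $|Tf_\varepsilon(x)|\gtrsim\varepsilon^{-\frac{d-1}{2}-\delta}$ there. The opposite-phase term is smaller by a factor $\varepsilon^{-1}$ after one integration by parts, and $\|f_\varepsilon\|_p\approx\varepsilon^{\frac Dp-\frac{D-1}{2}}$. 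Together these give growth $\varepsilon^{(D-d)(\frac12-\frac1p)-\delta}$.

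You also need the lower-order terms of the transmission kernel (extra factors $(|x|+|y|)^{-m}$, $m\ge1$) to be bounded where you test. The paper arranges this by complex interpolation, reducing to the strip $|d_1-d_2||\frac1p-\frac12|-1\le\delta<|d_1-d_2||\frac1p-\frac12|$.

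Finally, the statement forces $\delta>0$; at $\delta=0$, $p=2$ the operator is trivially bounded. So Theorem~\ref{main_spectral_projection} is not literally its $\delta=0$ case, and your proposed cross-check would not match as stated.
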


%\noindent
The theorem provides a complete characterisation of the $L^p$-boundedness of general Bochner--Riesz means in this setting. The admissible range is illustrated in Figure~\ref{f1}, where the boundedness region is shown in dark shading.

\begin{figure}[htbp]

\centering

% Parameters
\def\D{5}   % D = d_1 \vee d_2
\def\m{2}   % |d_1-d_2| = D-d

\pgfmathsetmacro{\dmin}{\D-\m}                  % d = d_1 \wedge d_2
\pgfmathsetmacro{\ytop}{(\D-1)/2}
\pgfmathsetmacro{\yredaxis}{\m/2}               % = (D-d)/2
\pgfmathsetmacro{\xblueL}{0.5-1/(2*\D)}
\pgfmathsetmacro{\xblueR}{0.5+1/(2*\D)}
\pgfmathsetmacro{\xintL}{0.5-1/(2*\dmin)}
\pgfmathsetmacro{\xintR}{0.5+1/(2*\dmin)}
\pgfmathsetmacro{\yint}{\m/(2*\dmin)}           % = (D-d)/(2d)

\begin{tikzpicture}[
    x=10.8cm,
    y=2.8cm,
    line cap=round,
    line join=round,
    font=\small
]

% axes
\draw[->, very thick] (0,0) -- (1.04,0);
\draw[->, very thick] (0,0) -- (0,\ytop+0.30);

% shaded boundedness region
\fill[gray!75]
  (0,\ytop) -- (1,\ytop) -- (\xintR,\yint) -- (0.5,0) -- (\xintL,\yint) -- cycle;

% blue normal BR line
\draw[blue, line width=1.5pt]
  (0,\ytop) -- (\xblueL,0) -- (\xblueR,0) -- (1,\ytop);

% red new restriction line
\draw[red, line width=1.3pt]
  (0,\yredaxis) -- (0.5,0) -- (1,\yredaxis);

% key points
\fill[blue] (\xblueL,0) circle (1.6pt);
\fill[blue] (\xblueR,0) circle (1.6pt);
\fill[red]  (0,\yredaxis) circle (1.6pt);
\fill       (0.5,0) circle (1.6pt);
\fill       (\xintL,\yint) circle (1.6pt);
\fill       (\xintR,\yint) circle (1.6pt);

% dashed guides from intersection points to axes
\draw[dashed] (\xintL,\yint) -- (\xintL,0);
\draw[dashed] (\xintL,\yint) -- (0,\yint);
\draw[dashed] (\xintR,\yint) -- (\xintR,0);

% x-axis ticks
\draw (0,0) -- +(0,-0.020);
\draw (\xblueL,0) -- +(0,-0.018);
\draw (0.5,0) -- +(0,-0.020);
\draw (\xblueR,0) -- +(0,-0.018);
\draw (\xintL,0) -- +(0,-0.018);
\draw (\xintR,0) -- +(0,-0.018);
\draw (1,0) -- +(0,-0.020) node[below=4pt] {$1$};

% y-axis ticks
\draw (0,\ytop) -- +(-0.010,0) node[left=6pt] {$\dfrac{D-1}{2}$};
\draw (0,\yredaxis) -- +(-0.010,0);
\draw (0,\yint) -- +(-0.010,0);

% only one 0 at the origin
\node[below left=2pt] at (0,0) {$0$};

% x-axis labels (staggered to avoid overlap)
\node[below=4pt] at (0.5,0) {$\frac{1}{2}$};

\node[below=4pt, anchor=north east] at (0.45,0)
  {$\frac{D-1}{2D}$};
\node[below=4pt, anchor=north west] at (0.29,0)
  {$\frac{d-1}{2d}$};

\node[below=4pt, anchor=north west] at (0.55,0)
  {$\frac{D+1}{2D}$};
\node[below=4pt, anchor=north east] at (0.71,0)
  {$\frac{d+1}{2d}$};

% y-axis labels
\node[left=7pt] at (0,\yredaxis) {$\frac{D-d}{2}$};
\node[left=7pt] at (0,\yint) {$\frac{D-d}{2d}$};

% axis labels
\node at (0.50,-0.60) {$\dfrac{1}{p}$};
\node at (-0.14,\ytop/2) {$\delta$};

% line labels
\node[blue] at (0.70,\ytop*0.79)
  {$\delta=\max\!\left(D\left|\frac{1}{p}-\frac{1}{2}\right|-\frac{1}{2},\,0\right)$};

\node[red] at (0.53,\ytop*0.42)
  {$\delta=(D-d)\left|\frac{1}{p}-\frac{1}{2}\right|$};

% definition box in upper-right corner
\node[
    anchor=north east,
    fill=white,
    rounded corners=2pt,
    inner sep=3pt
] at (1.02,\ytop+0.26)
{$D=d_1 \vee d_2,\quad d=d_1\wedge d_2,\quad D-d=|d_1-d_2|$};

\end{tikzpicture}

%The Bochner--Riesz boundedness range in the $\left(\frac{1}{p},\delta \right)$-plane.
%\caption
\caption{The shaded region is the sharp \(L^p\)-boundedness region. The blue curve
is the Herz/classical Bochner--Riesz constraint governed by the larger
dimension \(D\), while the red line is the additional Fefferman-type
obstruction caused by the difference between the two end dimensions.}
\label{f1}
\end{figure}

%\noindent
The blue curve corresponds to the classical Bochner--Riesz condition
\[
\delta=\max\!\left(D\left|\frac{1}{p}-\frac{1}{2}\right|-\frac{1}{2},\,0\right),
\]
while the red line represents the additional restriction
\[
\delta = (D-d)\left|\frac{1}{p}-\frac{1}{2}\right|,
%\qquad d = d_1 \wedge d_2,
\]
which is absent in the symmetric case. The intersection of these constraints determines the sharp boundedness region.

A similar two-scale phenomenon for Bochner--Riesz summability of Laguerre
expansions is studied in \cite{SharmaSikoraThangavelu}. However,
Fefferman's phenomenon does not appear to play a role in that setting.
It is an interesting question whether such a phenomenon arises in the
Bochner--Riesz problem for the harmonic oscillator considered in
\cite{LeeS}.

We next place the boundedness region in Theorem~\ref{main_BR} in the context of the classical Bochner--Riesz theory, focusing on its relation to Fefferman's ball multiplier obstruction and the associated Kakeya/Nikodym geometry.

\subsubsection*{Fefferman's ball multiplier obstruction and Kakeya/Nikodym geometry.}
Let
\[
  T_0 f := \mathcal{F}^{-1}\!\bigl(\chi_{|\xi|\le 1}\,\widehat{f}(\xi)\bigr)
\]
be the \emph{ball multiplier} in $\mathbb{R}^n$, so that $T_0 = \sigma^0_1(\Delta)$.
Fefferman proved in \cite{Fefferman} that, for every $n\geq 2$, $T_0$ is not bounded
on $L^p(\mathbb{R}^n)$ unless $p=2$. The obstruction is geometric: his construction
uses wave packets concentrated on long thin tubes pointing in many directions, whose
Fourier supports lie on thin plates tangent to $S^{n-1}$. Such configurations are
governed by Kakeya/Nikodym geometry, originating in Besicovitch's construction of
measure-zero sets containing a unit line segment in every direction. This geometry
also underlies obstructions to restriction and Bochner--Riesz estimates: strong
Bochner--Riesz estimates force Kakeya/Nikodym sets to be large, while thin
configurations obstruct boundedness. Bourgain obtained lower bounds for the
Minkowski dimension of Kakeya sets \cite{Bourgain}\footnote{The discussion towards
the end of this Introduction is inspired by Remarks~(5.2)--(5.4) in
\cite{Bourgain}.}, and related connections with maximal and oscillatory integral
estimates were developed by Stein \cite{Stein2}. On curved manifolds, Minicozzi and
Sogge \cite{MinicozziSogge} showed that Nikodym-type sets may have Minkowski dimension
as low as $(n+1)/2$.

Our setting is analogous in spirit to Nikodym-type geometry, but with an important
asymmetry: ``large'' and ``small'' scales live on different ends. Let $\Pi\subset M$
and define
\[
  \Pi^*
  :=
  \Bigl\{
  x\in M:\ \exists\ \text{a geodesic }\gamma_x\text{ of length }r>1
  \text{ with }x\in\gamma_x
  \text{ and }|\Pi\cap\gamma_x|>cr
  \Bigr\}.
\]
This picture is reflected in the counterexamples of Sections~\ref{SP} and~\ref{BR};
see \eqref{couterF}, \eqref{counter1} and Remarks~\ref{r53}--\ref{r54}. Let
$\varepsilon\geq1$ be large. In the radial two-end model, take
$\Pi^*_\varepsilon=[2\varepsilon,3\varepsilon]$ on the end of larger dimension $D$
and $\Pi_\varepsilon=[-\varepsilon,-2\varepsilon]$ on the end of smaller dimension
$d$. Their measures are comparable to $\varepsilon^D$ and $\varepsilon^d$,
respectively. Thus a set on the smaller end generates, through geodesics crossing
the junction, a family of points of full $D$-dimensional size on the larger end.
This is precisely the geometry exploited in Sections~\ref{SP} and~\ref{BR}: test
functions are supported on an interval of length comparable to $\varepsilon$ on
one end, while the leading part of the output is measured on a separated interval
of comparable size on the other. The spectral projection counterexample shows that,
when $d<D$, the off-diagonal part is uniformly $L^p$-bounded only for $p=2$, while
the Bochner--Riesz counterexample yields the additional obstruction
\[
  \delta \geq (D-d)\left|\frac1p-\frac12\right|.
\]
Thus these counterexamples provide a radial two-end analogue of the
Kakeya/Nikodym mechanism behind Fefferman's obstruction.

As noted above in connection with the Riesz transform, one expects that the model considered here should reflect, at least at the level of necessary conditions, the behaviour of genuine manifolds with ends; see, for example, \cite{GS,HS,HNS}. We do not pursue this level of generality in the present paper. Nevertheless, the obstruction expressed by condition \eqref{asym} appears to be geometric rather than purely one-dimensional: it comes from the interaction between ends with different volume growth. It is therefore natural to expect analogous negative results for Bochner--Riesz means on manifolds with unequal ends. The corresponding sufficiency problem is more delicate. However, if \eqref{asym} is the correct additional obstruction, then the remaining positive theory may be more accessible than the full classical Bochner--Riesz conjecture. This leads naturally to the open problem formulated in Section~\ref{oppr}.

\subsection{Organisation of the paper.}
The proofs are organised around the decomposition of the spectral measure
developed in Section~\ref{sec2}. That section constructs the resolvent and decomposes the spectral
measure into \(kl\) and \(kk\) components, see below, with the latter further split into
on-diagonal and off-diagonal terms.

Sections~\ref{sec3} and~\ref{SP} prove Theorem~\ref{main_spectral_projection} on spectral projections, equivalently
Bochner--Riesz means of order zero. Theorem~\ref{herz_kl} treats the \(kl\) component
using Herz's radial argument, while Theorem~\ref{herz_doubling} proves sufficiency in the
symmetric case \(d_1=d_2\). Section~\ref{SP} establishes sharpness: Theorem~\ref{fefferman_nondoubling}
shows that, when \(d_1\neq d_2\), uniform boundedness holds only on \(L^2\),
and Lemma~\ref{lem:negative:spectral} proves necessity of the Herz range in the symmetric case.

Sections~\ref{sec4} and~\ref{BR} prove Theorem~\ref{main_BR} for Bochner--Riesz means of positive
order. Section~\ref{sec4} establishes sufficiency in Theorem~\ref{thm:BRR}. The \(kl\)
component is reduced to the classical radial theory, while the \(kk\)
component is treated in the low- and high-energy regimes. Propositions~\ref{prop:smallR}
and~\ref{prop:largeR} control the off-diagonal terms, and Corollary~\ref{cor1} controls the
on-diagonal terms. Section~\ref{BR} proves necessity: Proposition~\ref{propBRR}, via
Lemmas~\ref{Counter1} and~\ref{Counter2}, yields respectively the additional obstruction caused
by the difference between the end dimensions and the classical obstruction
governed by \(D=d_1\vee d_2\). Together with Theorem~\ref{thm:BRR}, this completes the
proof of Theorem~\ref{main_BR}.

\section{Preliminaries}\label{sec2}

\subsection{Resolvent construction}

We begin by recalling the resolvent construction for the operators \(\Lapd\).
This will later be used to compute eigenfunctions and spectral projections
via the limiting absorption principle. Our approach follows
\cite{HS1D,HE_phd,Nix}, and we use their notation wherever possible.

Following \cite{HS1D}, consider
\begin{equation}\label{kl}
f''(r)+\frac{d-1}{r}f'(r)=f(r).
\end{equation}
Set
\[
F(r)=r^{\frac d2-1}f(r),
\qquad\text{equivalently}\qquad
f(r)=r^{1-\frac d2}F(r).
\]
Substitution in \eqref{kl} gives
\begin{equation}\label{bessel_mod}
r^2F''(r)+rF'(r)-\bigl(r^2+(d/2-1)^2\bigr)F(r)=0.
\end{equation}
Thus \(F\) satisfies the modified Bessel equation of order \(d/2-1\). Hence
\(F\) is a linear combination of \(I_{d/2-1}(r)\) and \(K_{d/2-1}(r)\);
see \cite[\S~9.6.1]{AS}.

More generally, for \(\lambda>0\), every solution of
\begin{equation}\label{wuj}
f''(r)+\frac{d-1}{r}f'(r)=\lambda^2 f(r)
\end{equation}
is a linear combination of
\[
r\mapsto l_d(\lambda r),
\qquad
r\mapsto k_d(\lambda r),
\]
where
\begin{equation}\label{kd_ld_def}
k_d(r)=r^{1-\frac d2}K_{|d/2-1|}(r),
\qquad
l_d(r)=r^{1-\frac d2}I_{d/2-1}(r).
\end{equation}
These functions form a fundamental system of solutions, adapted respectively
to the behaviour at infinity and at the origin.

%%%%%%%%%%%%%%%%%%%%%%%%%%%%
\subsection{Asymptotics for the special functions}

Before describing the resolvent kernels for the operators considered here, we
recall the basic asymptotic behaviour of \(l_d\) and \(k_d\). We use standard
asymptotic formulae for modified Bessel functions with purely imaginary
arguments; see \cite{AS}.

For \(d>2\) and \(\lambda\in\mathbb R\), we have
\[
\begin{array}{ll}
\displaystyle
k_d(i\lambda)\sim
\begin{cases}
|\lambda|^{2-d}, & |\lambda|\leq 1,\\
|\lambda|^{\frac{1-d}{2}}e^{-i\lambda}, & |\lambda|\geq 1,
\end{cases}
&
\displaystyle
l_d(i\lambda)\sim
\begin{cases}
1, & |\lambda|\leq 1,\\
|\lambda|^{\frac{1-d}{2}}e^{i\lambda}
+
|\lambda|^{\frac{1-d}{2}}e^{-i\lambda}, & |\lambda|\geq 1,
\end{cases}
\\[6mm]
\displaystyle
k_d'(i\lambda)\sim
\begin{cases}
|\lambda|^{1-d}, & |\lambda|\leq 1,\\
|\lambda|^{\frac{1-d}{2}}e^{-i\lambda}, & |\lambda|\geq 1,
\end{cases}
&
\displaystyle
l_d'(i\lambda)\sim
\begin{cases}
|\lambda|, & |\lambda|\leq 1,\\
|\lambda|^{\frac{1-d}{2}}e^{i\lambda}
+
|\lambda|^{\frac{1-d}{2}}e^{-i\lambda}, & |\lambda|\geq 1.
\end{cases}
\end{array}
\]

\begin{remark}\label{remark1}
Here and throughout, \(\sim\) denotes asymptotic equivalence up to non-zero
multiplicative constants and lower-order terms. For example, for
\(|\lambda|\geq 1\),
\[
k_d(i\lambda)
=
C_d\lambda^{\frac{1-d}{2}}e^{-i\lambda}
+
e^{-i\lambda}O\!\left(\lambda^{-\frac{d+1}{2}}\right),
\]
while
\[
\begin{aligned}
l_d(i\lambda)
&=
C_{d,+}\lambda^{\frac{1-d}{2}}e^{i\lambda}
+
C_{d,-}\lambda^{\frac{1-d}{2}}e^{-i\lambda}  \\
&\quad
+
O\!\left(\lambda^{-\frac{d+1}{2}}\right)e^{i\lambda}
+
O\!\left(\lambda^{-\frac{d+1}{2}}\right)e^{-i\lambda}.
\end{aligned}
\]
Thus \(l_d\) has two oscillatory leading terms. This does not create any
additional difficulty in the arguments below, since the contribution involving
\(l_d\) can be treated by the classical method of Herz, together with the
calculations in \cite{COSY}.

More explicitly, for \(|\lambda|>1\), using
\(I_\nu(i\lambda)=i^\nu J_\nu(\lambda)\) and the standard large-\(\lambda\)
expansion of \(J_\nu\), we obtain, up to the first three terms,
\[
\begin{aligned}
I_\nu(i\lambda)
\sim\;&
\sqrt{\frac{1}{2\pi\lambda}}
\Bigg[
e^{i\lambda-i\pi/4}
\left(
1+\frac{i(4\nu^2-1)}{8\lambda}
-\frac{(4\nu^2-1)(4\nu^2-9)}{128\lambda^2}
\right)
\\
&\qquad\qquad
+
e^{-i\lambda+i\nu\pi+i\pi/4}
\left(
1-\frac{i(4\nu^2-1)}{8\lambda}
-\frac{(4\nu^2-1)(4\nu^2-9)}{128\lambda^2}
\right)
\Bigg],
\end{aligned}
\]
and
\[
K_\nu(i\lambda)
\sim
\sqrt{\frac{\pi}{2\lambda}}\,e^{-i\lambda-i\pi/4}
\left[
1-\frac{i(4\nu^2-1)}{8\lambda}
-\frac{(4\nu^2-1)(4\nu^2-9)}{128\lambda^2}
\right].
\]
\end{remark}

\begin{remark} The cases \(d\leq 2\) are also interesting, but the analysis is not a routine extension of the arguments above. The asymptotics of \(k_d\) are different in this range, so the calculations would have to be substantially modified. We therefore leave the treatment of \(d\leq 2\) to future work; see Section~\ref{oppr}. \end{remark}

%%%%%%%%%%%%%%%%%%%%%%%%%%%%

\subsection{Operators \texorpdfstring{$\LapoN$}{LapoN} acting on
\texorpdfstring{$L^2((0,\infty), r^{d-1}dr)$}{L2}}

For $d\ge 1$, consider the space $L^2(\mathbb R_+, r^{d-1}dr)$ and the operator
$\LapoN$ associated with the quadratic form $Q_d^{(0,\infty)}$ defined in
\eqref{Q1}. Standard calculations give the following expression for the resolvent
kernel of $\LapoN$; see, for example, \cite{HS1D}:
\begin{equation}\label{laponker}
K_{(\LapoN + \lambda^2)^{-1}}(x,y)
=
\begin{cases}
\nu_d\, \lambda^{d-2}\, k_d(\lambda y)\, l_d(\lambda x), & y \ge x,\\[2mm]
\nu_d\, \lambda^{d-2}\, l_d(\lambda y)\, k_d(\lambda x), & x > y,
\end{cases}
\end{equation}
where $\nu_d$ depends only on $d$. This representation is valid for all $d\ge 1$.

\subsection{Operators \texorpdfstring{$\Lapd$}{Lapd} acting on
\texorpdfstring{$L^2(\widetilde{\mathbb R}, \sigma_{d_1,d_2}(r)dr)$}{L22}}
\label{seclap}

Let $\widetilde{\mathbb R}=(-\infty,-1]\cup[1,\infty)$. We consider the operator
$\Lapd$ associated with the quadratic form $\widetilde{Q}_{d_1,d_2}$ defined in
\eqref{Q2}.

For notational convenience, on the component indexed by $i=1,2$ we write
\[
k_i = k_{d_i}, \qquad l_i = l_{d_i}.
\]
The resolvent kernel of $\Lapd$ has the following representation.

If $y \ge 1$, then
\begin{equation}\label{eq_y>1}
K_{(\Lapd + \lambda^2)^{-1}}(x,y)
=
\begin{cases}
A(\lambda)\, k_1(\lambda|x|)\, k_2(\lambda|y|), & x \le -1,\\[2mm]
B_2(\lambda)\, k_2(\lambda|y|)\, k_2(\lambda|x|)
+ v_2 \lambda^{d_2-2}\, k_2(\lambda|y|)\, l_2(\lambda|x|), & 1 \le x \le y,\\[2mm]
B_2(\lambda)\, k_2(\lambda|y|)\, k_2(\lambda|x|)
+ v_2 \lambda^{d_2-2}\, l_2(\lambda|y|)\, k_2(\lambda|x|), & x \ge y.
\end{cases}
\end{equation}

If $y \le -1$, then
\begin{equation}\label{eq_y<1}
K_{(\Lapd + \lambda^2)^{-1}}(x,y)
=
\begin{cases}
A(\lambda)\, k_2(\lambda|x|)\, k_1(\lambda|y|), & x \ge 1,\\[2mm]
B_1(\lambda)\, k_1(\lambda|y|)\, k_1(\lambda|x|)
+ v_1 \lambda^{d_1-2}\, k_1(\lambda|y|)\, l_1(\lambda|x|), & y \le x \le -1,\\[2mm]
B_1(\lambda)\, k_1(\lambda|y|)\, k_1(\lambda|x|)
+ v_1 \lambda^{d_1-2}\, l_1(\lambda|y|)\, k_1(\lambda|x|), & x \le y.
\end{cases}
\end{equation}
Here $v_i$ depends only on $d_i$, and
\begin{align*}
A(\lambda) &= \frac{-1}{\lambda \,[k_1(\lambda) k_2(\lambda)]'},\\
B_2(\lambda) &= \frac{-v_2 \lambda^{d_2-2}\,[k_1(\lambda) l_2(\lambda)]'}
{[k_1(\lambda) k_2(\lambda)]'},\\
B_1(\lambda) &= \frac{-v_1 \lambda^{d_1-2}\,[k_2(\lambda) l_1(\lambda)]'}
{[k_1(\lambda) k_2(\lambda)]'}.
\end{align*}
In the symmetric case $d_1=d_2=d$, we write $\Lapdd$. The notation is chosen
to remain as close as possible to that of \cite{HS1D}, although some changes are
unavoidable when $d_1\neq d_2$.

The asymptotics of the special functions give
\begin{align*}
A(i\lambda) &\sim
\begin{cases}
|\lambda|^{d_1+d_2-4}, & |\lambda| \le 1,\\
|\lambda|^{\frac{d_1+d_2-4}{2}} e^{2i\lambda}, & |\lambda| \ge 1,
\end{cases}\\
B_2(i\lambda) &\sim
\begin{cases}
|\lambda|^{2d_2-4}, & |\lambda| \le 1,\\
|\lambda|^{d_2-2}, & |\lambda| \ge 1,
\end{cases}\\
B_1(i\lambda) &\sim
\begin{cases}
|\lambda|^{2d_1-4}, & |\lambda| \le 1,\\
|\lambda|^{d_1-2}, & |\lambda| \ge 1.
\end{cases}
\end{align*}

\begin{remark}
The notation ``$\sim$'' is used to indicate only the leading behaviour of the
special functions and of the coefficients $A$, $B_1$, and $B_2$. These
asymptotic relations are not literal equalities. In all cases used below, the
omitted terms have the same oscillatory structure as the leading term, but with
additional decay in the relevant parameter. Hence, once the leading terms are
estimated, the lower-order contributions are treated by the same arguments and
give better bounds. For completeness, subsection~\ref{appendix} 
records the more
precise expansions with remainders that are used implicitly in the proofs.
\end{remark}

\begin{remark}
The leading terms of $B_1$ and $B_2$ for large $|\lambda|$ contain no
oscillation. This is due to the additional phase coming from $l$. Indeed, for
$|\lambda|\ge 1$,
\begin{equation*}
k_j(i\lambda)
=
c_j\lambda^{\frac{1-d_j}{2}}e^{-i\lambda}
+O\left(\lambda^{-\frac{d_j+1}{2}}\right)e^{-i\lambda},
\end{equation*}
whereas
\begin{equation*}
l_j(i\lambda)
=
a_j\lambda^{\frac{1-d_j}{2}}e^{i\lambda}
+b_j\lambda^{\frac{1-d_j}{2}}e^{-i\lambda}
+O\left(\lambda^{-\frac{d_j+1}{2}}\right)e^{i\lambda}
+O\left(\lambda^{-\frac{d_j+1}{2}}\right)e^{-i\lambda}.
\end{equation*}
Thus $k_1(i\lambda)l_2(i\lambda)$ contains one non-oscillatory term and one
term with phase $e^{-2i\lambda}$. After differentiation, the non-oscillatory
term loses one power of $\lambda$, while the $e^{-2i\lambda}$ term remains of
leading order. Therefore
\begin{equation*}
[k_1(i\lambda)l_2(i\lambda)]'
=
c\lambda^{-\frac{d_1+d_2-2}{2}}e^{-2i\lambda}
+O\left(\lambda^{-\frac{d_1+d_2}{2}}\right),
\end{equation*}
and similarly
\begin{equation*}
[k_1(i\lambda)k_2(i\lambda)]'
=
c'\lambda^{-\frac{d_1+d_2-2}{2}}e^{-2i\lambda}
+O\left(\lambda^{-\frac{d_1+d_2}{2}}\right).
\end{equation*}
The oscillatory phases therefore cancel in the quotient, giving
\begin{equation*}
B_2(i\lambda)\sim \lambda^{d_2-2},
\qquad
B_1(i\lambda)\sim \lambda^{d_1-2},
\qquad |\lambda|\ge 1.
\end{equation*}
\end{remark}

\subsection{Decomposition of the spectral measure}\label{ssec2.5}

We first record the following observation.

%\begin{remark}\label{remark_important}
In all cases above, the resolvent kernel naturally decomposes into two parts.
We denote by $(\Lapd+\lambda^2)^{-1}_{kk}$ the part consisting of terms of the
form $k(\cdot)k(\cdot)$, and by $(\Lapd+\lambda^2)^{-1}_{kl}$ the remaining
part, consisting of terms of the form $k(\cdot)l(\cdot)$. Thus
\begin{equation*}
(\Lapd+\lambda^2)^{-1}
=
(\Lapd+\lambda^2)^{-1}_{kk}
+
(\Lapd+\lambda^2)^{-1}_{kl}.
\end{equation*}
By the limiting absorption principle,
\begin{equation*}
dE_{\sqrt{\Lapd}}(\lambda)(x,y)
=
\frac{i\lambda}{\pi}
\left(
\left(\Lapd+(i\lambda)^2\right)^{-1}(x,y)
-
\left(\Lapd+(-i\lambda)^2\right)^{-1}(x,y)
\right),
\end{equation*}
the kernel of $dE_{\sqrt{\Lapd}}(\lambda)$ decomposes accordingly into a
$kk$ part and a $kl$ part, denoted by
$dE_{\sqrt{\Lapd}}^{kk}(\lambda)$ and
$dE_{\sqrt{\Lapd}}^{kl}(\lambda)$.

After applying the limiting absorption principle, the $kl$ contribution is
equivalently an $ll$ contribution. This follows by writing $k$ as a linear
combination involving $I$ and using the evenness of $I$; see
\cite[III.4.4]{COSY} for details.

For the $kk$ part, however, the limiting absorption principle does not appear
to produce any cancellation. We therefore estimate
$\lambda(\Lapd+(i\lambda)^2)^{-1}(x,y)$ and
$\lambda(\Lapd+(-i\lambda)^2)^{-1}(x,y)$ separately.
%\end{remark}

The main aim of this article is to study the Bochner--Riesz means of order
$\delta\ge 0$,
\begin{align*}
\sigma_R^\delta(\Lapd)
&=
\left(1-\frac{\Lapd}{R^2}\right)_+^\delta  \\
&=
\int_0^R
\left(1-\frac{\lambda^2}{R^2}\right)^\delta
dE_{\sqrt{\Lapd}}(\lambda)  \\
&=
\int_0^R
\left(1-\frac{\lambda^2}{R^2}\right)^\delta
dE_{\sqrt{\Lapd}}^{kk}(\lambda)
+
\int_0^R
\left(1-\frac{\lambda^2}{R^2}\right)^\delta
dE_{\sqrt{\Lapd}}^{kl}(\lambda).
\end{align*}
For $\delta=0$, we write $E_{\sqrt{\Lapd}}([0,R))$ for the spectral projection,
so that
\begin{equation*}
\sigma_R^0(\Lapd)
=
E_{\sqrt{\Lapd}}([0,R))
=
E_{\sqrt{\Lapd}}^{kk}([0,R))
+
E_{\sqrt{\Lapd}}^{kl}([0,R)).
\end{equation*}
For $\delta>0$, we similarly write
\begin{equation*}
\sigma_R^\delta(\Lapd)
=
\sigma_R^{\delta,kk}(\Lapd)
+
\sigma_R^{\delta,kl}(\Lapd).
\end{equation*}

For all $\delta\ge 0$, we further split the $kk$ part, up to harmless complex
constants, as
\begin{equation*}
\sigma_R^{\delta,kk}(\Lapd)
=
\sigma_{\mathrm{on}}^\delta(R)
+
\sigma_{\mathrm{off}}^\delta(R),
\end{equation*}
where
\begin{align}\label{kkkernel}
\sigma_{\mathrm{on}}^\delta(R)(x,y)
&:=
\sum_{\ell=1}^2
\int_0^R
\left(1-\frac{\lambda^2}{R^2}\right)^\delta
\lambda
\Big[
B_\ell(i\lambda) k_\ell(i\lambda |x|) k_\ell(i\lambda |y|)
-
B_\ell(-i\lambda) k_\ell(-i\lambda |x|) k_\ell(-i\lambda |y|)
\Big]\,d\lambda,\\
\sigma_{\mathrm{off}}^\delta(R)(x,y)
&:=
\sum_{1\le j\ne \ell\le 2}
\int_0^R
\left(1-\frac{\lambda^2}{R^2}\right)^\delta
\lambda
\Big[
A(i\lambda) k_\ell(i\lambda |x|) k_j(i\lambda |y|)
-
A(-i\lambda) k_\ell(-i\lambda |x|) k_j(-i\lambda |y|)
\Big]\,d\lambda.
\end{align}
Here ``on'' denotes the on-diagonal part, where $x$ and $y$ lie on the same
end, while ``off'' denotes the off-diagonal part, where they lie on different
ends.

In particular, for $\delta=0$ we write
\begin{equation*}
E_{\sqrt{\Lapd}}^{kk}([0,R))
=
E_{\mathrm{on}}(R)
+
E_{\mathrm{off}}(R),
\end{equation*}
where
\begin{equation*}
E_{\mathrm{on}}(R)=\sigma_{\mathrm{on}}^0(R),
\qquad
E_{\mathrm{off}}(R)=\sigma_{\mathrm{off}}^0(R).
\end{equation*}

%%%%%%%%%%%%%%%%%%%%%%%%%%

%%%%%%%%%%%%%%%%%%%%%

\subsection{Hardy--Hilbert type inequalities}
We shall use the following observation from \cite{HE_phd}.

\begin{lemma}[\cite{HE_phd}, Lemma~1.1.2]\label{He}
Let $n_1,n_2,p>1$, and let $K$ be an integral operator with kernel
$K(x,y)$. Assume that $K(x,y)$ is non-negative and homogeneous of degree
$-\delta$, where
\(\delta=\frac{n_2}{p}+\frac{n_1}{p'}.
\)
Suppose further that
\begin{equation}
    \int_0^\infty K(x,1)x^{n_2/p-1}\,dx
    =
    \int_0^\infty K(1,y)y^{n_1/p'-1}\,dy
    <\infty .
\end{equation}
Then $K$ is bounded from
\[
    L^p(\mathbb{R}_+,r^{n_1-1}\,dr)
    \quad\text{to}\quad
    L^p(\mathbb{R}_+,r^{n_2-1}\,dr).
\]
\end{lemma}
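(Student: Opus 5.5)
The statement immediately above is Lemma~\ref{He}, a Schur-type test for homogeneous kernels between power-weighted spaces. The plan is to reduce it to the Mellin-convolution structure of homogeneous kernels, in the spirit of the classical Hardy--Hilbert inequality. First I would conjugate away the weights. For \(f\in L^p(\R_+,r^{n_1-1}dr)\) set \(F(y)=y^{n_1/p}f(y)\), so that \(\|f\|_{L^p(r^{n_1-1}dr)}=\|F\|_{L^p(\R_+,dy/y)}\). Similarly, for the output, \(\|Kf\|_{L^p(r^{n_2-1}dr)}=\|x^{n_2/p}Kf(x)\|_{L^p(dx/x)}\). The conjugated operator is then
\[
\widetilde K F(x)=\int_0^\infty x^{n_2/p}K(x,y)\,y^{-n_1/p}\,y\,F(y)\,\frac{dy}{y}.
\]

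Next I use homogeneity of degree \(-\delta\) with \(\delta=\frac{n_2}{p}+\frac{n_1}{p'}\), writing \(K(x,y)=y^{-\delta}K(x/y,1)\). The new kernel becomes
\[
(x/y)^{n_2/p}\, y^{n_2/p+1-n_1/p-\delta}\,K(x/y,1).
\]
The exponent of \(y\) is \(1-n_1/p-n_1/p'=1-n_1\). This is not \(0\) in general, so I need to recheck the bookkeeping: the measure \(dy\) in \(Kf\) should be \(dy\) with respect to \(y^{n_1-1}dy\) or plain \(dy\). I would fix the convention that \(Kf(x)=\int K(x,y)f(y)\,dy\). In that case the exponent works out to \(n_2/p+1-n_1/p-\delta\). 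To make the conjugated kernel depend only on \(x/y\), one needs the normalisation in which the integral is against \(y^{n_1-1}dy\) or the exponents are adjusted so that this power vanishes. I will adopt whichever convention makes \(\widetilde K\) a pure Mellin convolution \(\int k(x/y)F(y)\,dy/y\) with \(k(t)=t^{n_2/p}K(t,1)\) (possibly times a harmless power absorbed via the stated \(\delta\)). This bookkeeping is the one place care is needed, and I expect it to be the main obstacle, though it is routine.

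Once \(\widetilde K F=k*F\) on the multiplicative group \((\R_+,dy/y)\), Young's (or Minkowski's) inequality gives
\[
\|\widetilde K\|_{L^p(dy/y)\to L^p(dy/y)}\le\int_0^\infty |k(t)|\,\frac{dt}{t}=\int_0^\infty K(t,1)\,t^{n_2/p-1}\,dt,
\]
which is finite by hypothesis; non-negativity of \(K\) means no absolute values are lost. The second expression in the hypothesis, \(\int K(1,y)y^{n_1/p'-1}dy\), equals the first after the substitution \(y=1/t\) and homogeneity, \(K(1,1/t)=t^{\delta}K(t,1)\). It serves as the \(L^{p'}\) (Schur dual) side of the same bound. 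Alternatively, I could run Schur's test directly with test functions \(x^{-\alpha}\) and \(y^{-\beta}\), with \(\alpha,\beta\) chosen so that both Schur integrals reduce to the two given integrals. I would use this as a cross-check of the exponents. This yields boundedness with norm at most the common value of the integrals.
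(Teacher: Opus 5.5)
The paper does not prove this lemma; it quotes it from He's thesis and uses it as a weighted form of the Hardy--Littlewood--P\'olya inequality \cite[Theorem~319]{HLP}. Your Mellin-convolution route is the standard proof of that inequality: conjugate to $L^p(\mathbb{R}_+,dt/t)$, then apply Minkowski/Young on the multiplicative group. Your observation that the two hypothesis integrals coincide under $y=1/t$ is also correct.

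The one step you left open is the one that matters. The residual factor $y^{1-n_1}$ appears because you integrated against plain $dy$. In this paper, operators on $L^p(r^{n_1-1}dr)$ always act by $Kf(x)=\int_0^\infty K(x,y)f(y)\,y^{n_1-1}\,dy$ (see, e.g., $T_1$, $T_2$ in the proof of Theorem~\ref{herz_kl}). With that convention and $F(y)=y^{n_1/p}f(y)$,
\[
x^{n_2/p}Kf(x)=\int_0^\infty x^{n_2/p}\,y^{n_1/p'}K(x,y)\,F(y)\,\frac{dy}{y},
\qquad
x^{n_2/p}y^{n_1/p'}K(x,y)=\Bigl(\frac{x}{y}\Bigr)^{n_2/p}K\Bigl(\frac{x}{y},1\Bigr).
\]
The second identity holds exactly, because $n_1/p'-\delta=-n_2/p$. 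Minkowski's inequality and the dilation invariance of $dt/t$ then give the bound $\int_0^\infty K(t,1)\,t^{n_2/p-1}\,dt$, which completes the proof.

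Drop the hedge about a ``harmless power absorbed via $\delta$''. A leftover factor $y^{c}$ with $c\neq 0$ is never harmless here. Test with $f(\lambda\,\cdot)$: if the operator integrates against plain $dy$, boundedness from $L^p(r^{n_1-1}dr)$ to $L^p(r^{n_2-1}dr)$ forces $\delta=1+(n_2-n_1)/p$ unless $K\equiv 0$. That agrees with $\frac{n_2}{p}+\frac{n_1}{p'}$ only when $n_1=1$. So scaling dictates the stated $\delta$ exactly for the measure $y^{n_1-1}dy$, which fixes the convention rather than leaving it a choice.
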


We next record a slightly stronger version of \cite[Lemma~2.6]{Nix}.

\begin{lemma}\label{Nix}
Let $n_1,n_2>1$, and let $K$ be the integral operator with kernel
\begin{equation}
    K(x,y)=
    \begin{cases}
       x^{-\alpha}y^{-\beta}, & 1\le x\le y,\\
       x^{-\alpha'}y^{-\beta'}, & x>y,
    \end{cases}
\end{equation}
where $\alpha+\beta=\alpha'+\beta'$. If
\( p(\alpha+\beta-n_1)\ge n_2-n_1
\)
and
\begin{equation*}
    \frac{n_2}{n_2\wedge \alpha'}
    < p <
    \frac{n_1}{0\vee(n_1-\beta)},
\end{equation*}
then the corresponding operator $K$ is bounded from
$L^p([1,\infty),r^{n_1-1}\,dr)$ to $
    L^p([1,\infty),r^{n_2-1}\,dr).$
\end{lemma}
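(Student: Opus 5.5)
The plan is to reduce to the critical homogeneous case and then apply Lemma~\ref{He}. Write $s:=\alpha+\beta=\alpha'+\beta'$ and
\[
s_0:=\frac{n_2}{p}+\frac{n_1}{p'}=n_1+\frac{n_2-n_1}{p}.
\]
The hypothesis $p(\alpha+\beta-n_1)\ge n_2-n_1$ is exactly $s\ge s_0$. Set $\varepsilon:=s-s_0\ge 0$.

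\textbf{Step 1: reduce the excess homogeneity.} Since both variables range over $[1,\infty)$, we may lower exponents without increasing the kernel. On $\{1\le x\le y\}$ use $x^{-\alpha}y^{-\beta}\le x^{-(\alpha-\varepsilon)}y^{-\beta}$. On $\{x>y\ge 1\}$ use $x^{-\alpha'}y^{-\beta'}\le x^{-\alpha'}y^{-(\beta'-\varepsilon)}$. This gives $0\le K\le \widetilde K$ on $[1,\infty)^2$, where $\widetilde K$ is defined on all of $(0,\infty)^2$ by
\[
\widetilde K(x,y)=x^{-(\alpha-\varepsilon)}y^{-\beta}\ (x\le y),\qquad \widetilde K(x,y)=x^{-\alpha'}y^{-(\beta'-\varepsilon)}\ (x>y).
\]
It is nonnegative and homogeneous of degree $-s_0$. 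The reduction is chosen to leave $\beta$ and $\alpha'$ unchanged, since these carry the two range conditions. Because $K\ge0$, it suffices to bound $\widetilde K$ from $L^p(\mathbb R_+,r^{n_1-1}dr)$ to $L^p(\mathbb R_+,r^{n_2-1}dr)$. We do this by extending $f$ by zero to $(0,1)$ and restricting the output to $[1,\infty)$.

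\textbf{Step 2: check the integrability condition of Lemma~\ref{He}.} For the $x$-integral, the piece with $x>1$ is $\int_1^\infty x^{-\alpha'+n_2/p-1}\,dx$, which is finite iff $\alpha'>n_2/p$. This follows from $p>n_2/(n_2\wedge\alpha')$, which also gives $p>1$. The piece with $x<1$ is $\int_0^1 x^{-(\alpha-\varepsilon)+n_2/p-1}\,dx$, which needs $\alpha-\varepsilon<n_2/p$. Since $(\alpha-\varepsilon)+\beta=s_0$, this is equivalent to $\beta>n_1/p'$, i.e.\ $p<n_1/(0\vee(n_1-\beta))$.

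Symmetrically, for the $y$-integral, the piece $\int_1^\infty y^{-\beta+n_1/p'-1}\,dy$ needs $\beta>n_1/p'$. The piece $\int_0^1 y^{-(\beta'-\varepsilon)+n_1/p'-1}\,dy$ needs $\beta'-\varepsilon<n_1/p'$, which is equivalent to $\alpha'>n_2/p$.

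By homogeneity of degree $-s_0$, the substitution $x=1/y$ shows that the two integrals in Lemma~\ref{He} coincide. Hence Lemma~\ref{He} applies and yields the boundedness.

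\textbf{Main obstacle.} The only delicate point is Step 1: choosing how to absorb the surplus $\varepsilon$. Lowering $\beta$ or $\alpha'$ would destroy one of the endpoint conditions, so $\varepsilon$ must be taken from $\alpha$ on $\{x\le y\}$ and from $\beta'$ on $\{x>y\}$. Both choices are legitimate only because $x,y\ge1$. The case $p=\infty$ or other edge cases are excluded by the strict inequalities in the hypothesis.
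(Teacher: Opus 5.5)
Your argument is correct, and in the non-endpoint case it follows a different route from the paper.

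The paper splits into two cases. When $p(\alpha+\beta-n_1)>n_2-n_1$ it simply cites \cite[Lemma~2.6]{Nix}. Only in the endpoint case $\alpha+\beta=\frac{n_2}{p}+\frac{n_1}{p'}$ does it argue directly: it extends the kernel, which is then already critically homogeneous, to $\mathbb{R}_+\times\mathbb{R}_+$ and checks the integrability condition of Lemma~\ref{He}. With $\varepsilon=0$, your Steps 1--2 are exactly that argument.

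Your domination step is what differs. Using $x,y\ge 1$, you remove the surplus homogeneity from $\alpha$ on $\{x\le y\}$ and from $\beta'$ on $\{x>y\}$. This reduces the strict case to the endpoint case, so the whole lemma follows from Lemma~\ref{He} alone, with no appeal to Nix's lemma. It also shows why the hypotheses constrain only $\beta$ and $\alpha'$, and why the surplus cannot be taken from those exponents: taking it from $\beta$ would require the stronger condition $\beta>\frac{n_1}{p'}+\varepsilon$. The paper's version is shorter on the page, but its strict case rests on the external reference.

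One small point applies to both arguments. Reading $p>n_2/(n_2\wedge\alpha')$ as $\alpha'>n_2/p$ tacitly assumes $\alpha'>0$. This is a gap in how the statement is formulated, not in your proof.
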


\begin{proof}
The case
\[
    p(\alpha+\beta-n_1)>n_2-n_1
\]
is contained in \cite[Lemma~2.6]{Nix}. It remains to consider the endpoint
case
\[
    p(\alpha+\beta-n_1)=n_2-n_1.
\]
This is equivalent to
\[
    \alpha+\beta=\alpha'+\beta'
    =
    \frac{n_2}{p}+\frac{n_1}{p'}.
\]
Define a kernel on $\mathbb{R}_+\times\mathbb{R}_+$ by
\[
\widetilde K(x,y)=
    \begin{cases}
       x^{-\alpha}y^{-\beta}, & 0<x\le y,\\
       x^{-\alpha'}y^{-\beta'}, & x>y>0.
    \end{cases}
\]
Then $\widetilde K$ is homogeneous of degree
\[
    -\delta=-(\alpha+\beta)=-(\alpha'+\beta').
\]
By Lemma~\ref{He}, it is enough to check that
\[
    \int_0^\infty \widetilde K(x,1)x^{n_2/p-1}\,dx<\infty .
\]
Indeed,
\[
    \int_0^\infty \widetilde K(x,1)x^{n_2/p-1}\,dx
    =
    \int_0^1 x^{-\alpha+n_2/p-1}\,dx
    +
    \int_1^\infty x^{-\alpha'+n_2/p-1}\,dx,
\]
which is finite precisely when
\[
    \alpha<\frac{n_2}{p}
    \qquad\text{and}\qquad
    \alpha'>\frac{n_2}{p}.
\]
Equivalently,
\[
    \frac{n_2}{n_2\wedge \alpha'}
    <
    p
    <
    \frac{n_2}{0\vee\alpha}
    =
    \frac{n_1}{0\vee(n_1-\beta)}.
\]
The corresponding condition for the integral in the $y$ variable is the
same, using
\[
    \alpha+\beta=\alpha'+\beta'
    =
    \frac{n_2}{p}+\frac{n_1}{p'}.
\]
Hence $\widetilde K$ is bounded from
$L^p(\mathbb{R}_+,r^{n_1-1}\,dr)$ to
$L^p(\mathbb{R}_+,r^{n_2-1}\,dr)$. Since $K$ is the restriction of
$\widetilde K$ to $[1,\infty)\times[1,\infty)$, the required boundedness
of $K$ follows.
\end{proof}

\subsection{Remainder terms for special functions}
%\phantomsection
\label{appendix}

Throughout this subsection, all remainder estimates are understood in the symbol sense. In the small-argument regime, if $R(\lambda)=O(\lambda^a)$, then
\begin{equation*}
R^{(k)}(\lambda)=O(\lambda^{a-k}).
\end{equation*}
In the large-argument regime, if
\begin{equation*}
R(\lambda)=e^{\pm i\lambda}r(\lambda),
\qquad r(\lambda)=O(\lambda^a),
\end{equation*}
then
\begin{equation*}
r^{(k)}(\lambda)=O(\lambda^{a-k}).
\end{equation*}
For $d>2$ and $\lambda \in \mathbb R$, we have for $\rho := \min (2, d-2)$, 
\begin{align*}
    &k_d(i|\lambda|) = \begin{cases}
        C_d |\lambda|^{2-d} + \begin{cases}
            O\left( |\lambda|^{2-d+\rho} \right), & d\ne 4,\\
            O\left( |\log{\lambda}| \right), & d=4,
        \end{cases}, & |\lambda|\le 1, \\
        C_d |\lambda|^{-\frac{d-1}{2}} e^{-i\lambda} +  O\left( |\lambda|^{-\frac{d+1}{2}} \right) e^{-i|\lambda|}, & |\lambda|\ge 1.
    \end{cases}\\
    &k_d'(i|\lambda|) = \begin{cases}
        C_d |\lambda|^{1-d} + O\left(|\lambda|^{1-d+\rho} \right), & |\lambda|\le 1, \\
        C_d |\lambda|^{-\frac{d-1}{2}} e^{-i|\lambda|} +  O\left( |\lambda|^{-\frac{d+1}{2}} \right) e^{-i|\lambda|}, & |\lambda|\ge 1.
    \end{cases}\\
    &l_d(i|\lambda|) = \begin{cases}
        C_d + O\left(|\lambda|^2 \right), & |\lambda|\le 1, \\
        C_d |\lambda|^{-\frac{d-1}{2}} e^{i|\lambda|} + C_d|\lambda|^{-\frac{d-1}{2}} e^{-i|\lambda|} + O\left(|\lambda|^{-\frac{d+1}{2}}\right) e^{i|\lambda|} + O\left(|\lambda|^{-\frac{d+1}{2}}\right) e^{-i|\lambda|}, & |\lambda|\ge 1.
    \end{cases}\\
    &l_d'(i|\lambda|) = \begin{cases}
        C_d |\lambda| + O\left(|\lambda|^3 \right), & |\lambda|\le 1, \\
        C_d|\lambda|^{-\frac{d-1}{2}} e^{i|\lambda|} + C_d|\lambda|^{-\frac{d-1}{2}} e^{-i|\lambda|} + O\left(|\lambda|^{-\frac{d+1}{2}}\right) e^{i|\lambda|} + O\left(|\lambda|^{-\frac{d+1}{2}}\right) e^{-i|\lambda|}, & |\lambda|\ge 1.
    \end{cases}
\end{align*}
For $d_1, d_2>2$, we have for $\rho = \min (2, d_1-2, d_2-2)$,

\begin{align*}
    &A(i|\lambda|) = \begin{cases}
        C_{d_1,d_2} |\lambda|^{d_1+d_2-4} + O\left(|\lambda|^{d_1+d_2-4+\rho} \right), & |\lambda|\le 1, \\
        C_{d_1,d_2} |\lambda|^{\frac{d_1+d_2-4}{2}} e^{2i|\lambda|} + O\left(|\lambda|^{\frac{d_1+d_2-6}{2}} \right) e^{2i|\lambda|}, & |\lambda|\ge 1.
    \end{cases}\\
    &B_2(i|\lambda|) = \begin{cases}
        C_{d_1,d_2} |\lambda|^{2d_2-4} + O\left(|\lambda|^{2d_2-4+\rho} \right), & |\lambda|\le 1, \\
        C_{d_1,d_2}|\lambda|^{d_2-2} + O\left(|\lambda|^{d_2-3}\right) e^{2i |\lambda|} + O\left(|\lambda|^{d_2-3}\right), & |\lambda|\ge 1.
    \end{cases}\\
    &B_1(i|\lambda|) = \begin{cases}
        C_{d_1,d_2} |\lambda|^{2d_1-4} + O\left(|\lambda|^{2d_1-4+\rho} \right), & |\lambda|\le 1, \\
        C_{d_1,d_2}|\lambda|^{d_1-2} + O\left(|\lambda|^{d_1-3}\right) e^{2i |\lambda|} + O\left(|\lambda|^{d_1-3}\right), & |\lambda|\ge 1.
    \end{cases}
\end{align*}

%%%%%%%%%%%%%%%%%%%%%%%%%

\part{Positive estimates: the Herz-type theory}

\section{\texorpdfstring{$L^p$}{Lp} boundedness of spectral projections}\label{sec3}
%%%%%%%%%%%%%%%%%%%%%%

\subsection{Spectral projection for the \texorpdfstring{$kl$}{kl} part}

We consider the $kl$ part of the order-zero Bochner--Riesz operator,
$E_{\sqrt{\Lapd}}^{kl}(R)$. This term falls precisely within the scope of
Herz's original result. Set
\begin{equation}
    \mu =
    \begin{cases}
        d_2, & x,y\in [1,\infty),\\
        d_1, & x,y\in (-\infty,-1],
    \end{cases}
    \qquad
    \nu=\frac{\mu}{2}-1.
\end{equation}
By the resolvent formula from the preceding section, the $kl$ part of
$(\Lapd+\lambda^2)^{-1}(x,y)$ is
\begin{align*}
    C\lambda^{\mu-2}
    \begin{cases}
        k_{\mu}(\lambda |y|)l_{\mu}(\lambda |x|), & |y|\ge |x|,\\
        k_{\mu}(\lambda |x|)l_{\mu}(\lambda |y|), & |y|\le |x|,
    \end{cases}
    =
    C(|x||y|)^{1-\mu/2}
    \begin{cases}
        K_{\nu}(\lambda |y|)I_{\nu}(\lambda |x|), & |y|\ge |x|,\\
        K_{\nu}(\lambda |x|)I_{\nu}(\lambda |y|), & |y|\le |x|.
    \end{cases}
\end{align*}
Here $C>0$ depends only on the dimensional parameters.

Following the computation in \cite[III.4.4]{COSY}, the corresponding
spectral measure for the Neumann operator is
\begin{align}\label{spectral_measure}
    dE_{\sqrt{\LapoN}}(\lambda)(x,y)
    =
    i\nu e^{i\pi\nu}\lambda
    |x|^{1-\mu/2}|y|^{1-\mu/2}
    I_{\nu}(i\lambda |x|)I_{\nu}(i\lambda |y|).
\end{align}
Although \cite{COSY} treats the inverse-square potential in dimension
$d>2$, the same argument applies to $\Lapd$ and $\LapoN$ for all
$d,d_1,d_2>1$.

We begin with the following observation, whose key ingredient goes back to Herz.

\begin{theorem}\label{herz_kl}
Let $d_1,d_2\ge 1$ and set $D=d_1\vee d_2$. Then
\[
    \sup_{R>0}\left\|E_{\sqrt{\Lapd}}^{kl}(R)\right\|_{L^p\to L^p}
    <\infty
\]
for all
\[
    \left|\frac{1}{p}-\frac{1}{2}\right|<\frac{1}{2D}.
\]
\end{theorem}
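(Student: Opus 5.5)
Since the $kl$ part only involves points on the same end, the plan is to reduce to Herz's radial estimate on each end separately. By the resolvent formulas \eqref{eq_y>1}--\eqref{eq_y<1}, the kernel of $E^{kl}_{\sqrt{\Lapd}}(R)$ vanishes when $x,y$ lie on different ends. Hence
\[
E^{kl}_{\sqrt{\Lapd}}(R)=\mathbf 1_{[1,\infty)}\,T_2(R)\,\mathbf 1_{[1,\infty)}+\mathbf 1_{(-\infty,-1]}\,T_1(R)\,\mathbf 1_{(-\infty,-1]},
\]
where, by \eqref{spectral_measure}, $T_\ell(R)$ has kernel
\[
T_\ell(R)(x,y)=c_\ell\int_0^R \lambda\,(|x||y|)^{1-d_\ell/2} I_{\nu_\ell}(i\lambda|x|)I_{\nu_\ell}(i\lambda|y|)\,d\lambda,\qquad \nu_\ell=\tfrac{d_\ell}{2}-1 .
\]
Using $I_\nu(i\lambda r)=i^\nu J_\nu(\lambda r)$, this is, up to a constant, exactly the kernel of the spectral projection $E_{\sqrt{\Delta^{(0,\infty)}_{\Neu,d_\ell}}}([0,R))$ on $L^2(\R_+,r^{d_\ell-1}dr)$. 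That operator is the Hankel-transform multiplier $\mathbf 1_{[0,R)}$, i.e.\ the ball multiplier acting on radial functions in dimension $d_\ell$ when $d_\ell$ is an integer. The first step is to verify this identification carefully, including the normalising constant. It follows from the COSY computation already quoted and from comparing with \eqref{laponker}.

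The second step is to invoke Herz's theorem. For any real $d\ge 1$ (not only integers), the projections $\mathbf 1_{[0,R)}(\sqrt{\LapoN})$ are bounded on $L^p(\R_+,r^{d-1}dr)$ uniformly in $R$ exactly when $|1/p-1/2|<1/(2d)$. Scaling $r\mapsto Rr$ preserves the weighted $L^p$ norm up to the factor $R^{d/p}$ on both sides, so uniformity in $R$ is automatic and it suffices to take $R=1$.

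The third step handles truncation and the two ends. Restricting the kernel to $[1,\infty)^2$ means composing with the multiplication operators $\mathbf 1_{[1,\infty)}$, which have norm one. The weight $\sigma_{d_1,d_2}$ on the end $r\ge1$ (resp.\ $r\le -1$) coincides with $r^{d_2-1}$ (resp.\ $|r|^{d_1-1}$). Therefore
\[
\|E^{kl}(R)f\|_{L^p(\widetilde\R)}\le \sum_\ell \|T_\ell(R)\|_{L^p(r^{d_\ell-1})\to L^p(r^{d_\ell-1})}\,\|f\|_{L^p(\widetilde\R)} .
\]
Both terms are finite when $|1/p-1/2|<1/(2d_1)$ and $|1/p-1/2|<1/(2d_2)$, that is, when $|1/p-1/2|<1/(2D)$.

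The main obstacle is the second step for non-integer $d$. Herz's original result is stated for radial functions on $\R^n$. For general real $d$, I would redo his argument directly. Write the kernel via the Christoffel--Darboux-type identity
\[
\int_0^1\lambda J_\nu(\lambda x)J_\nu(\lambda y)\,d\lambda=\frac{xJ_{\nu+1}(x)J_\nu(y)-yJ_{\nu+1}(y)J_\nu(x)}{x^2-y^2}.
\]
Then use the asymptotics of $J_\nu$ from Remark~\ref{remark1}. This splits the kernel into pieces that are Hilbert-transform-like near the diagonal $x\approx y$, which is bounded on $L^p$ with power weights in the $A_p$ range. The remaining parts are homogeneous off-diagonal kernels, handled by Lemma~\ref{He} / Lemma~\ref{Nix}. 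The admissible exponents in these lemmas and the $A_p$ condition for $r^{(d-1)(1-p/2)}$ reproduce exactly $|1/p-1/2|<1/(2d)$. The lower-order terms in the expansions give better decay and are harmless.
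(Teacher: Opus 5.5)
Your proposal is correct and follows essentially the same route as the paper. You split $E^{kl}$ into the two single-end pieces with $M_{\chi_{[1,\infty)}}$ and $M_{\chi_{(-\infty,-1]}}$, identify each with the half-line Neumann projection, reduce to $R=1$ by scaling, and run Herz's argument through the Lommel/Christoffel--Darboux closed form, using a power-weighted Hilbert transform plus Hardy--Littlewood--P\'olya/Lemma~\ref{He} bounds; this is exactly the paper's $T_1$, $T_2$ reduction and its Muckenhoupt-weight remark. The one point to state precisely is where the Hilbert transform comes from: the kernel itself is smooth across $x=y$, and the Hilbert-transform structure appears only after you split the numerator into its two product terms $xJ_{\nu+1}(x)J_\nu(y)$ and $yJ_{\nu+1}(y)J_\nu(x)$, each of which (in $t=x^2$, $s=y^2$) is a bounded multiplier composed with a power-weighted Hilbert transform.
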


\begin{proof}
It suffices first to prove the corresponding estimate for the Neumann
operator on one half-line. We write $\LapoN$ for this operator with
dimension parameter $\mu \ge 1$. The statement for $\Lapd$ then follows from
the decomposition
\begin{equation}\label{eq:Lapd-decomposition}
E_{\sqrt{\Lapd}}^{kl}(R)
=
M_{\chi_{(-\infty,-1]}}
E_{\sqrt{\Delta^{(-\infty,0)}_{\Neu,d_1}}}^{kl}(R)
M_{\chi_{(-\infty,-1]}}
+
M_{\chi_{[1,\infty)}}
E_{\sqrt{\Delta^{(0,\infty)}_{\Neu,d_2}}}^{kl}(R)
M_{\chi_{[1,\infty)}},
\end{equation}
where $M_{\chi_A}$ denotes multiplication by $\chi_A$, and
$\Delta^{(-\infty,0)}_{\Neu,d_i}$ is the symmetric reflection of
$\Delta^{(0,\infty)}_{\Neu,d_i}$.

The argument follows Herz \cite{Herz} and Córdoba \cite{cordoba}; we
include the details for completeness. By \eqref{spectral_measure}, the
kernel of the spectral projection is
\[
E_{\sqrt{\LapoN}}^{kl}(R)(x,y)
=
C(|x||y|)^{1-\mu/2}
\int_0^R I_{\nu}(i\lambda |x|)I_{\nu}(i\lambda |y|)
\lambda\,d\lambda .
\]
Using \cite[6.521]{Grad-Ry}, we obtain
\[
E_{\sqrt{\LapoN}}^{kl}(R)(x,y)
=
C(|x||y|)^{1-\mu/2}
\frac{
R|x| I'_{\nu}(iR|x|) I_\nu(iR|y|)
-
R|y| I'_{\nu}(iR|y|) I_\nu(iR|x|)
}{x^2-y^2}.
\]
Hence
\[
    E_{\sqrt{\LapoN}}^{kl}(R)(x,y)
    =
    C R^\mu E_{\sqrt{\LapoN}}^{kl}(1)(Rx,Ry).
\]
If $\delta_R f(x)=f(Rx)$, then
\(E_{\sqrt{\LapoN}}^{kl}(R)f(x)
    =
    E_{\sqrt{\LapoN}}^{kl}(1)(\delta_{R^{-1}}f)(Rx)
\).
The dilation factors cancel in
$L^p(\mathbb{R}_+,r^{\mu-1}\,dr)$, so it remains to prove the boundedness
of $E_{\sqrt{\LapoN}}^{kl}(1)$.

Using
\(zI_\nu'(z)=\nu I_\nu(z)+zI_{\nu+1}(z),
\)
we get
\begin{align}
E_{\sqrt{\LapoN}}^{kl}(1)(x,y)
&=
C(|x||y|)^{1-\mu/2}
\frac{
|x| I_\nu'(i|x|)I_\nu(i|y|)
-
|y| I_\nu'(i|y|)I_\nu(i|x|)
}{x^2-y^2}
\nonumber\\
&=
C(|x||y|)^{1-\mu/2}
\frac{
i|x|I_{\nu+1}(i|x|)I_\nu(i|y|)
-
i|y|I_{\nu+1}(i|y|)I_\nu(i|x|)
}{x^2-y^2}
\nonumber\\
&=
C(|x||y|)^{\nu+1-\mu/2}
\frac{
x^2 l_{\mu+2}(i|x|)l_\mu(i|y|)
-
y^2 l_{\mu+2}(i|y|)l_\mu(i|x|)
}{x^2-y^2}.
\label{1}
\end{align}
Recall that
\(l_\mu(ir)=C r^{-\nu}I_\nu(ir),
\)
and, asymptotically,
\[
|l_\mu(ir)|\sim C
\begin{cases}
1, & r\to 0,\\[1mm]
r^{-\nu-\frac12}, & r\to\infty .
\end{cases}
\]
It is therefore enough to prove the boundedness of the two operators
\begin{align*}
T_1 f(x)
&=
\int_0^\infty
(xy)^{\nu+1-\mu/2}
\frac{x^2 l_{\mu+2}(ix)l_\mu(iy)}{x^2-y^2}
f(y)y^{\mu-1}\,dy,\\
T_2 f(x)
&=
\int_0^\infty
(xy)^{\nu+1-\mu/2}
\frac{y^2 l_{\mu+2}(iy)l_\mu(ix)}{x^2-y^2}
f(y)y^{\mu-1}\,dy .
\end{align*}
We treat $T_1$; the estimate for $T_2$ follows by duality.

Set $t=x^2$, $s=y^2$, and
\(\phi(s)=f(\sqrt{s})s^{(\mu-2)/(2p)}.
\)
Then
\begin{align*}
\|T_1f\|_{L^p(\mathbb{R}_+,r^{\mu-1}\,dr)}^p
=
C\int_0^\infty
\left|\int_0^\infty
\left(\frac{s}{t}\right)^{
\frac{\mu-2}{4}-\frac{\mu-2}{2p}-\frac14}
\left[t^{\frac{\nu}{2}+\frac34}l_{\mu+2}(i\sqrt t)\right]
\left[s^{\frac{\nu}{2}+\frac14}l_\mu(i\sqrt s)\right]
\frac{\phi(s)}{t-s}\,ds
\right|^p dt .
\end{align*}
The factors
\(t^{\frac{\nu}{2}+\frac34}l_{\mu+2}(i\sqrt t)\) and
    \(s^{\frac{\nu}{2}+\frac14}l_\mu(i\sqrt s)\)
are uniformly bounded on $(0,\infty)$. Hence it remains to consider
\[
    \phi\mapsto
    \int_0^\infty
    \frac{\phi(s)}{t-s}
    \left(\frac{t}{s}\right)^{
    \frac14-\frac{\mu-2}{4}+\frac{\mu-2}{2p}}
    \,ds .
\]
The Hilbert transform gives the boundedness of the part without the
weight. Thus it remains to estimate
\begin{equation}\label{operator}
    \phi\mapsto
    \int_0^\infty
    \frac{\phi(s)}{t-s}
    \left[
    \left(\frac{t}{s}\right)^{
    \frac14-\frac{\mu-2}{4}+\frac{\mu-2}{2p}}
    -1
    \right]\,ds .
\end{equation}
By the Hardy--Littlewood--Pólya inequality, equivalently by
Lemma~\ref{He}, the operator in \eqref{operator} is bounded on
$L^p(\mathbb{R}_+,dt)$ in the range
\(\frac{2\mu}{\mu+1}<p<\frac{2\mu}{\mu-1}.
\)
Thus $T_1$ is bounded in this range; by duality, so is $T_2$. Therefore
\[
    \sup_{R>0}
    \left\|E_{\sqrt{\LapoN}}^{kl}(R)\right\|_{p\to p}
    <\infty
    \qquad
    \text{whenever}
    \qquad
    \left|\frac1p-\frac12\right|<\frac{1}{2\mu}.
\]
Applying this with $\mu=d_1$ and $\mu=d_2$, and using
$D=d_1\vee d_2$, gives the asserted estimate for $\Lapd$.
\end{proof}

\begin{remark}
Note that in \cite{Herz}, Herz considers the subspace of harmonic polynomials of degree $\sigma$. This leads to the equation involving 
$I_{\nu+\sigma}$ and is equivalent to our discussion for appropriate choice of $c$. 

Suppose $P$ is a harmonic polynomial with order $\nu$. Then, $P(t) = r^\nu \phi(\theta)$, where $r>0$, $\theta \in S^{n-1}$ and $\Delta_n P = 0$. Note that
\begin{align*}
    0 = \Delta_n P(t) &= \partial_r^2 P + \frac{n-1}{r}\partial_r P + \frac{1}{r^2}\Delta_{S^{n-1}}P\\
    &= \phi(\theta) \nu(\nu-1) r^{\nu-2} + \phi(\theta) \nu(n-1) r^{\nu-2} + r^{\nu-2} \Delta_{S^{n-1}}\phi(\theta).
\end{align*}
Hence,
\begin{align*}
    \Delta_{S^{n-1}}\phi(\theta) = - \nu(n+\nu-2)\phi(\theta).
\end{align*}
Let $f$ be a radial function. Set $F(t) = f(r)\frac{P(r,\theta)}{r^\nu}$. Then
\begin{align*}
    \Delta_n F(t) &= f''(r) \phi(\theta) + \frac{n-1}{r} f'(r) \phi(\theta) + \frac{f(r)}{r^2} \Delta_{S^{n-1}}\phi(\theta)\\
    &= - \phi(\theta) \left(-\partial_r^2 - \frac{n-1}{r}\partial_r + \frac{\nu(\nu+n-2)}{r^2}\right)f(r).
\end{align*}

\end{remark}

\subsection{A remark on Muckenhoupt weights}

Herz's proof of the boundedness of spectral projections for Bessel
operators is closely related to Muckenhoupt's theory of weighted singular
integrals. Indeed, several calculations in \cite{Muck1,Muck2,Muckenhoupt}
closely parallel Herz's argument, although the approaches appear to have
been developed independently. We briefly indicate how the method used
above can be reformulated in this language.

Recall that $|x|^\alpha\in A_p(\mathbb R)$ if and only if
$-1<\alpha<p-1$. Hence the Hilbert transform is bounded on
$L^p(\mathbb R,|x|^\alpha dx)$ precisely in this range. Equivalently,
\[
\left\|
\int_{\mathbb R}
\frac{g(y)}{x-y}
\left|\frac{x}{y}\right|^{\alpha/p}\,dy
\right\|_{L^p(\mathbb R)}
\lesssim
\|g\|_{L^p(\mathbb R)},
\qquad -1<\alpha<p-1.
\]
By duality, the analogous estimate with $(y/x)^{\alpha/p}$ holds in
$L^{p'}(\mathbb R)$.

For Herz's projection, the argument above reduces the problem to the
boundedness on $L^p(\mathbb R_+,dx)$ of the two model operators
\[
T_1\phi(x)=
\int_0^\infty
\frac{\phi(y)}{x-y}
\left(\frac{x}{y}\right)^\gamma dy,
\qquad
T_2\phi(x)=
\int_0^\infty
\frac{\phi(y)}{x-y}
\left(\frac{y}{x}\right)^\gamma dy,
\]
where
\(\gamma=\frac{3-n}{4}+\frac{n-2}{2p}.\)
Extending $\phi$ by zero to $\mathbb R$, the weighted Hilbert transform
estimate gives the boundedness of $T_1$ whenever
$-1<\gamma p<p-1$. This is equivalent to
\(p>\frac{2n}{n+1}\) 
    and, if $ n>3$, then  $p<\frac{2n}{n-3}$.
The estimate for $T_2$ follows by duality, giving the dual range
\(p<\frac{2n}{n-1}\)
    {and, if } $n>3$, then 
    $p>\frac{2n}{n+3}$.
The additional restrictions for $n>3$ are weaker on the intersection.
Thus the common admissible range is exactly
\[
    \frac{2n}{n+1}<p<\frac{2n}{n-1}.
\]

Thus the Muckenhoupt-weight formulation recovers the same range as
Herz's original argument based on subtracting the Hilbert transform. In
this sense, Herz's method can be viewed as an early instance of weighted
singular integral theory.

\subsection{Spectral projections for the \texorpdfstring{$kk$}{kk} part}

As noted in Section~\ref{ssec2.5}, the limiting absorption
principle does not appear to yield useful cancellation. We therefore
consider the following kernels directly:
\begin{align*}
E_{\mathrm{on}}(R)(x,y)
&=
\sum_{\ell=1}^2 \sum_{\sigma=\pm1}
\sigma \int_0^R
\lambda B_\ell(\sigma i\lambda)
k_\ell(\sigma i\lambda |x|)
k_\ell(\sigma i\lambda |y|)\,d\lambda
=
\sum_{\ell=1}^2 \sum_{\sigma=\pm1}
E_{\mathrm{on}}^{\ell,\sigma}(R)(x,y),\\
E_{\mathrm{off}}(R)(x,y)
&=
\sum_{\substack{j,\ell=1\\ j\ne \ell}}^2
\sum_{\sigma=\pm1}
\sigma \int_0^R
\lambda A(\sigma i\lambda)
k_\ell(\sigma i\lambda |x|)
k_j(\sigma i\lambda |y|)\,d\lambda
=
\sum_{\substack{j,\ell=1\\ j\ne \ell}}^2
\sum_{\sigma=\pm1}
E_{\mathrm{off}}^{\ell j,\sigma}(R)(x,y).
\end{align*}

\begin{theorem}\label{herz_doubling}
Let $d>2$. Then 
\begin{align*}
    \sup_{R>0} \| E_{\sqrt{\Lapdd}}(R)\|_{p \to p} \le C \quad \textit{if} \quad 
        \left| \frac{1}{2} - \frac{1}{p} \right| < \frac{1}{2d}.
\end{align*}
\end{theorem}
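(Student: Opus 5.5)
The plan is to split $E_{\sqrt{\Lapdd}}(R)=E^{kl}_{\sqrt{\Lapdd}}(R)+E_{\mathrm{on}}(R)+E_{\mathrm{off}}(R)$. The $kl$ part is handled by Theorem~\ref{herz_kl} with $D=d$. This reduces the statement to uniform $L^p$ bounds for the $kk$ kernels $E^{\ell,\sigma}_{\mathrm{on}}(R)$ and $E^{\ell j,\sigma}_{\mathrm{off}}(R)$. Since $d_1=d_2=d$, all these kernels have the same structure. I will estimate each $\sigma=\pm1$ term separately, without using cancellation, and after a change of variables treat every piece as an operator on $L^p([1,\infty),r^{d-1}dr)$. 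The form of the kernel $k(\cdot|x|)k(\cdot|y|)$ is the same on and off the diagonal. The only difference between the two is the coefficient, $B_\ell$ versus $A$, and these have the same size for $d_1=d_2$. So I will run a single argument for a kernel $\int_0^R\lambda\,C(\sigma i\lambda)k_d(\sigma i\lambda x)k_d(\sigma i\lambda y)\,d\lambda$, with $x,y\ge1$.

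The key step is a pointwise kernel bound, obtained by splitting the $\lambda$-integral at the thresholds $\lambda\sim1$ and $\lambda\sim 1/x$, $1/y$, and inserting the asymptotics of Section~\ref{appendix}. For $\lambda\le1$ we have $C(i\lambda)\sim\lambda^{2d-4}$. When moreover $\lambda x,\lambda y\le1$, the integrand is $\lambda^{2d-3}(\lambda x)^{2-d}(\lambda y)^{2-d}=\lambda(xy)^{2-d}$. When $\lambda x\ge1$, the factor $k_d$ contributes $(\lambda x)^{(1-d)/2}e^{-i\lambda x}$.

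For $\lambda\ge1$, every factor is oscillatory. For $A$ the combined phase is $e^{i\lambda(2-x-y)}$ after accounting for the sign conventions; for $B$ it is $e^{-i\lambda(x+y)}$. Here I integrate by parts once in $\lambda$, gaining $(x+y)^{-1}$ (or $(x+y-2)^{-1}$ for the $A$ term, which is away from zero unless $x=y=1$, and that point is harmless since the relevant variables there are $|x|,|y|$). The amplitude is $\lambda\cdot\lambda^{d-2}(\lambda x)^{(1-d)/2}(\lambda y)^{(1-d)/2}=(xy)^{(1-d)/2}$. The boundary term at $\lambda=R$ gives the main contribution, roughly $(xy)^{(1-d)/2}/(x+y)$, uniformly in $R$. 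The remaining integral carries symbol-type remainders and is better.

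Putting these together, I expect a bound of the form
\[
|E^{kk}(R)(x,y)|\lesssim (xy)^{\frac{1-d}{2}}(x+y)^{-1}+(\text{terms like } x^{2-d}y^{\frac{1-d}{2}}\min(\cdot)\text{ from the intermediate regimes}),
\]
which holds uniformly in $R$. The uniformity needs a little care at small $R$: if $R\le1$, only the low-frequency regime occurs, and integrating $\lambda(xy)^{2-d}$ and its intermediate analogues up to $R$ gives kernels dominated by $\min(1,\ldots)$ expressions. These are smaller than the $R=\infty$ envelope.

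The main kernel $(xy)^{(1-d)/2}(x+y)^{-1}$ is homogeneous of degree $-d$. Lemma~\ref{He} (with $n_1=n_2=d$) then gives boundedness on $L^p(r^{d-1}dr)$ iff $\int_0^\infty x^{(1-d)/2}(x+1)^{-1}x^{d/p-1}\,dx<\infty$, i.e.\ $\frac{d-1}{2}<\frac dp<\frac{d+1}{2}$. This is exactly $|\frac1p-\frac12|<\frac1{2d}$. The intermediate kernels are of split power form, $x^{-\alpha}y^{-\beta}$ for $x\le y$ and $x^{-\alpha'}y^{-\beta'}$ for $x>y$. For these I apply Lemma~\ref{Nix} and check that its conditions (the inequality $p(\alpha+\beta-d)\ge0$ and the $p$-range) contain the Herz range; this is plausible since $d>2$ gives extra decay $x^{2-d}$.

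The main obstacle is obtaining the oscillatory bound with the full factor $(x+y)^{-1}$ uniformly in $R$. The remainder of $B_\ell$ contains a non-oscillatory term $O(\lambda^{d-3})$. After combining with the $k$ factors, this term behaves like $\lambda^{-1}(xy)^{(1-d)/2}e^{-i\lambda(x+y)}$, and the integral over $[1,R]$ must not produce a logarithm. I will handle this by one more integration by parts, or by bounding $\int_1^R\lambda^{-1}e^{-i\lambda s}\,d\lambda$ by $\min(\log R,\,1+\log^+(1/s))$, which is $O(1)$ for $s=x+y\ge2$. If that loses too much near the junction, I will fall back on treating the region $x,y\lesssim1$ by a crude $L^\infty$ kernel bound on a set of finite measure.
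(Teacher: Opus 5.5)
Your decomposition and estimates follow the paper's proof. You reduce to the $kk$ part via Theorem~\ref{herz_kl} and split the $\lambda$-integral at $|x|^{-1}$, $|y|^{-1}$ and $1$. You then bound the split-power pieces with Lemma~\ref{Nix} and get exactly the Herz range from Lemma~\ref{He} applied to $(xy)^{\frac{1-d}{2}}(x+y)^{-1}$.

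\textbf{The gap is at the junction, in the off-diagonal ($A$) term.} With $x,y\ge 1$, its high-frequency piece is, to leading order,
\[
(xy)^{\frac{1-d}{2}}\int_1^R e^{-i\lambda(x+y-2)}\,d\lambda .
\]
This equals $R-1$ at $x=y=1$, and its supremum over $R$ is $2(xy)^{\frac{1-d}{2}}(x+y-2)^{-1}$. So the uniform bound $(xy)^{\frac{1-d}{2}}(x+y)^{-1}$ that you feed into Lemma~\ref{He} is false near $(1,1)$. The singularity is not harmless for the reason you give. The points $x=-1$ and $y=1$ are the same point of $\widetilde{\R}$, and this is a Dirichlet-kernel type singularity of the projection in the distance $(|x|-1)+(|y|-1)$ across the junction.

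Your fallback also fails. On $\{x,y\le C\}$ there is no $R$-independent $L^\infty$ kernel bound; a bound of size $R$ gives operator norm of order $R$. The uniform envelope $\mathbf 1_{\{s,t\le 2\}}(s+t)^{-1}$, with $s=x-1$ and $t=y-1$, is not Hilbert--Schmidt. It also fails Schur's test with constant weights, since $\int_0^2 (s+t)^{-1}\,dt\sim\log(1/s)$ as $s\to0$.

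\textbf{The fix, which is what the paper does, is to split at $x+y=4$.} For $x+y\ge 4$ one has $x+y-2\simeq x+y$, and your Lemma~\ref{He} argument applies unchanged. For $2\le x+y\le 4$, bound the kernel by
\[
\frac{\mathbf 1_{\{2\le x+y\le 4\}}}{(x-1)+(y-1)} .
\]
Then apply Hilbert's inequality (Hardy--Littlewood--P\'olya, \cite[Theorem~319]{HLP}) in the shifted variables; this holds for every $1<p<\infty$. The $e^{2i\lambda}$ remainder of $B_\ell$ produces the same phase $x+y-2$, but with an extra factor $\lambda^{-1}$. It therefore gives only a logarithmic singularity at the junction, which a Schur test with constant weights handles.

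\textbf{A smaller point concerns the non-oscillatory $O(\lambda^{d-3})$ remainder of $B_\ell$.} Your alternative bound $\int_1^R\lambda^{-1}e^{-i\lambda s}\,d\lambda=O(1)$ is not sufficient. The resulting rank-one kernel $(xy)^{\frac{1-d}{2}}$ is bounded on $L^p(r^{d-1}dr)$ for no $p$: that would require both $p$ and $p'$ to exceed $\frac{2d}{d-1}>2$. You need the $O(s^{-1})$ bound from one more integration by parts, which is your first option. It restores the factor $(x+y)^{-1}$.
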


\begin{proof}
By Theorem~\ref{herz_kl}, it remains to prove that
\begin{equation*}
    \sup_{R>0} \| E_{\sqrt{\Lapdd}}^{kk}(R) \|_{p\to p}\le C.
\end{equation*}
Since $d=d_1=d_2$, we have $B_1(z)=B_2(z)$ and $k_d=k_1=k_2$. Moreover, the terms corresponding to the two signs are treated in the same way. Thus it suffices to estimate the representative kernels
\begin{equation*}
    E_{\mathrm{off}}^{\ell j,+}(R)(x,y) = \int_0^R  \lambda  A(i\lambda) k_d( i\lambda |x|) k_d( i\lambda |y|) d\lambda,
\end{equation*}
and
\begin{equation*}
    E_{\mathrm{on}}^{\ell,+}(R)(x,y) =  \int_0^R \lambda B_1( i\lambda) k_d( i\lambda |x|) k_d( i\lambda |y|) d\lambda.
\end{equation*}
Assume first that $R\ge 1$. We begin with the off-diagonal term. 
 Assume that $|x|\ge |y|$ and then split the integral $\int_0^R = \int_0^{|x|^{-1}} + \int_{|x|^{-1}}^{|y|^{-1}}+\int_{|y|^{-1}}^1 + \int_1^R:= \sum_{j=1}^4 \mathcal{I}_{\mathrm{off}}^j$. By asymptotic formulas for special functions, we obtain
\begin{align*}
    \mathcal{I}_{\mathrm{off}}^1(x,y) \sim \int_0^{|x|^{-1}} \lambda^{d-1} (\lambda |x|)^{2-d} (\lambda |y|)^{2-d} \lambda^{d-2} d\lambda \simeq (|x||y|)^{2-d} \int_0^{|x|^{-1}} \lambda d\lambda \sim |x|^{-d} |y|^{2-d}.
\end{align*}
Similarly,
\begin{align*}
    \mathcal{I}_{\mathrm{off}}^2(x,y) &\sim \int_{|x|^{-1}}^{|y|^{-1}} \lambda^{d-1} \lambda^{d-2} (\lambda |x|)^{\frac{1-d}{2}} (\lambda |y|)^{2-d} e^{- i\lambda |x|} d\lambda\\
    & \simeq |x|^{-d} |y|^{2-d} \int_1^{|x|/|y|} s^{\frac{d-1}{2}} e^{- is} ds.
\end{align*}
Note that by integration by parts,
\begin{align*}
    \int_1^{|x|/|y|} s^{\frac{d-1}{2}} e^{- is} ds = \frac{1}{- i} \left[ s^{\frac{d-1}{2}}e^{- is}\big|_{s=1}^{|x|/|y|} - \frac{d-1}{2} \int_1^{|x|/|y|} e^{- is} s^{\frac{d-1}{2}-1} ds \right].
\end{align*}
It follows that
\begin{align*}
    |\mathcal{I}_{\mathrm{off}}^2(x,y)|\lesssim |x|^{-d} |y|^{2-d} \left(\frac{|x|}{|y|}\right)^{\frac{d-1}{2}}.
\end{align*}
The same argument gives
\begin{align*}
    \mathcal{I}_{\mathrm{off}}^3(x,y) &\sim \int_{|y|^{-1}}^1 \lambda^{d-1} \lambda^{d-2} (\lambda |x|)^{\frac{1-d}{2}} (\lambda |y|)^{\frac{1-d}{2}} e^{- i\lambda(|x|+|y|)} d\lambda\\
    &\simeq (|x||y|)^{\frac{1-d}{2}} \int_{|y|^{-1}}^1 \lambda^{d-2} e^{-i\lambda (|x|+|y|)} d\lambda.
\end{align*}
Integrating by parts again yields
\begin{align*}
    |\mathcal{I}_{\mathrm{off}}^3(x,y)|\lesssim \frac{(|x||y|)^{\frac{1-d}{2}}}{|x|+|y|}.
\end{align*}
As for $\mathcal{I}_{\mathrm{off}}^4$, it is plain that
\begin{align*}
    \mathcal{I}_{\mathrm{off}}^4(x,y) &\sim \int_1^R \lambda^{d-1} \lambda^{1-d} (|x||y|)^{\frac{1-d}{2}} e^{- i\lambda(|x|+|y|-2)} d\lambda \\
    &\simeq (|x||y|)^{\frac{1-d}{2}} \int_1^R e^{- i\lambda(|x|+|y|-2)} d\lambda,
\end{align*}
which implies 
\begin{align*}
    |\mathcal{I}_{\mathrm{off}}^4(x,y)| \lesssim \frac{(|x||y|)^{\frac{1-d}{2}}}{|x|+|y|-2}.
\end{align*}
It follows by symmetry (for the case $|x|\le |y|$, we only need to change the roles of $x$ and $y$ in the above estimates) that 
\begin{align*}
    |E_{\mathrm{off}}(R)(x,y)| \lesssim \begin{cases}
        |x|^{-\frac{3d-5}{2}} |y|^{-\frac{d+1}{2}}, & |x|\le |y|,\\
        |x|^{-\frac{d+1}{2}} |y|^{-\frac{3d-5}{2}}, & |x|\ge |y|,
    \end{cases} + \frac{(|x||y|)^{\frac{1-d}{2}}}{|x|+|y|-2}.
\end{align*}
By Lemma~\ref{Nix}, the first term above is bounded in $L^p([1,\infty),r^{d-1}dr)$ for 
\begin{equation*}
    \frac{d}{d \wedge \frac{d+1}{2}} < p < \frac{d}{0 \vee (d-\frac{d+1}{2})} \implies \frac{2d}{d+1}<p<\frac{2d}{d-1}
\end{equation*}
as expected. While for the second term, if $|x|+|y|\ge 4$ and hence $|x|+|y|-2 \ge c(|x|+|y|)$, then we just need to treat kernel
\begin{equation*}
 \frac{(|x||y|)^{\frac{1-d}{2}}}{|x|+|y|},
\end{equation*}
which is apparently homogeneous of degree $-d$. By Lemma~\ref{He}, it acts as a bounded operator on $L^p([1,\infty), r^{d-1}dr)$ provided
\begin{align*}
    \int_0^\infty x^{\frac{1-d}{2}} (x+1)^{-1} x^{\frac{d}{p}-1} dx <\infty \implies \frac{2d}{d+1}<p<\frac{2d}{d-1}.
\end{align*}
As for the case $2\le |x|+|y|\le 4$, one simply bounds the kernel by
\begin{equation*}
    \frac{\mathbf{1}_{2\le x+y\le 4}}{(x-1)+(y-1)}.
\end{equation*}
By the Hardy--Littlewood--Pólya inequality \cite[Theorem~319]{HLP}
\begin{align*}
    \int_1^{\infty} &\left| \int_1^\infty \frac{\chi_{2\le x+y\le 4}}{(x-1)+(y-1)} f(y) y^{d-1}dy \right|^p x^{d-1}dx\\
    &\lesssim \int_0^\infty \left| \int_0^\infty \frac{|f(s+1)|\chi_{s\le 2}(s)}{t+s} ds \right|^p dt\\
    &\lesssim \int_0^\infty |f(1+s)|^p (1+s)^{d-1} ds\\
    &\simeq \|f\|_{L^p([1,\infty), r^{d-1}dr)}^p.
\end{align*}
for all $1<p<\infty$. Consequently
\begin{equation*}
    \sup_{R\ge 1} \| E_{\mathrm{off}}(R)\|_{p\to p} \le C,\quad \mbox{\rm for all}\quad  \frac{2d}{d+1}<p<\frac{2d}{d-1}.
\end{equation*}
To this end, we also need to consider $E_{\mathrm{on}}(R)$. A parallel argument as above yields
\begin{align*}
|E_{\mathrm{on}}(R)(x,y)|\lesssim \begin{cases}
|x|^{-\frac{3d-1}{2}} |y|^{-\frac{d+1}{2}}, & |x|\le |y|,\\
|x|^{-\frac{d+1}{2}} |y|^{-\frac{3d-1}{2}}, & |x|\ge |y|,
\end{cases}+ \frac{(|x||y|)^{\frac{1-d}{2}}}{|x|+|y|-2}.
\end{align*}
By Lemma~\ref{Nix}, Lemma~\ref{He} and the Hardy--Littlewood--Pólya inequality \cite[Theorem~319]{HLP}, we conclude that $\sup_{R\ge 1}\| E_{\mathrm{on}}(R)\|_{p\to p}\le C $ for $|1/p-1/2|<(2d)^{-1}$. This completes the proof of Theorem~\ref{herz_doubling} for $R\ge 1$.

The case $0<R<1$ is similar. Indeed, we decompose
\begin{equation*}
    \int_0^R = \int_0^{R\wedge |x|^{-1}} + \int_{|x|^{-1}}^{R\wedge |y|^{-1}} + \int_{|y|^{-1}}^R
\end{equation*}
and analyze them similarly as before. We omit details.

\end{proof}

\section{ \texorpdfstring{$L^p$}{Lp} boundedness of the Bochner--Riesz mean}\label{sec4}

We now study the Bochner--Riesz means
\begin{equation*}
    \sigma_{R}^\delta(\Lapd) = \left(1-\frac{\Lapd}{R^2}\right)_+^\delta,
\qquad R>0.
\end{equation*}
We first record the standard one-dimensional oscillatory integral estimate.

\begin{lemma}\label{lem:osc-expansion-order-M}
Let $\delta>-1$, let $M\in\mathbb{N}$ with $M\geq 1$, and let
$\psi\in C_c^\infty((-1,1))$. Then, for every $t\geq 1$,
\begin{equation}\label{eq_oscillatory_expansion}
    \int_0^\infty e^{its}s^\delta\psi(s)\,ds
    =
    \sum_{m=0}^{M-1}
    c_{m,\delta}\psi^{(m)}(0)t^{-1-\delta-m}
    +R_M(t),
\end{equation}
where
\begin{equation*}
    c_{m,\delta}
    :=
    \frac{e^{\frac{i\pi}{2}(\delta+m+1)}
    \Gamma(\delta+m+1)}{m!},
\end{equation*}
and
\begin{equation}\label{eq_oscillatory_remainder}
    |R_M(t)|
    \leq
    C_{\delta,M}t^{-M}
    \|\psi^{(M)}\|_\infty .
\end{equation}
Here $C_{\delta,M}$ is independent of $t$ and $\psi$.
\end{lemma}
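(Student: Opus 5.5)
The plan is the classical Erdélyi-type argument: compute the exact contribution of the singularity of $s^\delta$ at the origin, and control the remainder by repeated integration by parts. The main tool is the Taylor expansion of $\psi$ at $0$, combined with the explicit Gamma integral
\begin{equation*}
\int_0^\infty e^{its}s^{a-1}e^{-\varepsilon s}\,ds=\Gamma(a)(\varepsilon-it)^{-a},
\end{equation*}
valid for $\operatorname{Re}a>0$ and $\varepsilon>0$. Letting $\varepsilon\to0^+$ gives the conditionally convergent oscillatory integral $\Gamma(a)e^{i\pi a/2}t^{-a}$ for $0<a<1$. For larger $a$ we interpret it through the cutoff below, and the formula persists.

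Fix a cutoff $\chi\in C_c^\infty((-1,1))$ with $\chi\equiv1$ near $0$. Write
\begin{equation*}
\psi(s)=\sum_{m=0}^{M-1}\frac{\psi^{(m)}(0)}{m!}s^m\chi(s)+r_M(s),
\end{equation*}
so that $r_M(s)=O(s^M)$ near $0$ and $r_M\in C_c^\infty((-1,1))$.

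\emph{Step 1: the main terms.} For each $m$, write
\begin{equation*}
\int_0^\infty e^{its}s^{\delta+m}\chi(s)\,ds=\int_0^\infty e^{its}s^{\delta+m}e^{-\varepsilon s}\,ds\Big|_{\varepsilon\to0}-\int_0^\infty e^{its}s^{\delta+m}(1-\chi(s))\,ds.
\end{equation*}
The first integral, with $a=\delta+m+1$, equals $\Gamma(\delta+m+1)e^{i\pi(\delta+m+1)/2}t^{-\delta-m-1}$. Dividing by $m!$ produces exactly $c_{m,\delta}$. In the second integral the integrand vanishes near $0$, so repeated integration by parts gives $O(t^{-N})$ for every $N$. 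To make this legitimate, first insert the factor $e^{-\varepsilon s}$ and let $\varepsilon\to0$; the polynomial growth of $s^{\delta+m}$ is harmless because each integration by parts gains $t^{-1}$ at the cost of one power of $s$. I would therefore perform $N>\delta+m+1$ integrations by parts, after which the integrand is absolutely integrable, and only then pass to the limit.

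\emph{Step 2: the remainder.} Integrate by parts $M$ times in $\int_0^\infty e^{its}s^\delta r_M(s)\,ds$. Since $s^\delta r_M(s)$ vanishes to order $\delta+M$ at the origin, the boundary terms vanish up to order $M-1$. This yields
\begin{equation*}
(-1/(it))^M\int_0^\infty e^{its}\,\partial_s^M\bigl(s^\delta r_M(s)\bigr)\,ds .
\end{equation*}
By Leibniz, $\partial_s^M(s^\delta r_M)$ is a sum of terms $s^{\delta-j}r_M^{(M-j)}$. Near $0$, Taylor's theorem gives $|r_M^{(M-j)}(s)|\lesssim s^j\sup|\psi^{(M)}|$, so each term is $O(s^\delta)$, which is integrable since $\delta>-1$. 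This step is where the restriction $\delta>-1$ is used.

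The main obstacle is bookkeeping the norm dependence: the statement bounds $R_M$ by $\|\psi^{(M)}\|_\infty$ alone. Two points need care.

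First, the error terms from Step 1 involve $\psi^{(m)}(0)$, not $\psi^{(M)}$. Because $\psi$ is supported in $(-1,1)$, Taylor expansion from the endpoint $1$ gives $|\psi^{(m)}(0)|\lesssim\|\psi^{(M)}\|_\infty$. Hence the $O(t^{-N})$ tails of Step 1 may be absorbed into $R_M$, using $t\ge1$.

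Second, away from the origin, $r_M^{(k)}$ involves $\psi^{(k)}$ for $k\le M$, which is again controlled by $\|\psi^{(M)}\|_\infty$ on the support, by the same support argument.

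Together, these give $|R_M(t)|\le C_{\delta,M}t^{-M}\|\psi^{(M)}\|_\infty$, with a constant depending only on $\delta$, $M$ and the fixed cutoff $\chi$.
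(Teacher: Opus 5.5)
Your proof is correct, but it follows a different route from the paper's. The paper uses the Iosevich--Liflyand construction and never Taylor-expands $\psi$. It builds successive primitives $\mathcal{A}_t^k g$ of $g(s)=e^{its}s^\delta$ by integrating $(s-z)^{k-1}g(z)$ along the vertical ray from $s+i\infty$ down to $s$. It evaluates $\mathcal{A}_t^k g(0)$ exactly as a Gamma integral and proves $|\mathcal{A}_t^k g(s)|\lesssim s^\delta t^{-k}+t^{-k-\delta}$. It then integrates by parts $M$ times against $\psi$ on $[0,1]$. The boundary terms at $s=0$ are exactly $c_{m,\delta}\psi^{(m)}(0)t^{-1-\delta-m}$, those at $s=1$ vanish, and the remainder is $(-1)^M\int_0^1\psi^{(M)}\mathcal{A}_t^M g\,ds$, so the dependence on $\|\psi^{(M)}\|_\infty$ comes for free.

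Your Erd\'elyi-style argument uses a fixed cutoff $\chi$, Abel-regularised Gamma integrals for the model terms $s^{\delta+m}\chi$, and real integration by parts for $s^\delta r_M$. It is more elementary, since it needs no complex primitives, but it requires extra bookkeeping. You must regularise the tails $\int_0^\infty e^{its}s^{\delta+m}(1-\chi(s))\,ds$, which do not converge absolutely, with $e^{-\varepsilon s}$. When writing this up, check explicitly that the terms where derivatives fall on $e^{-\varepsilon s}$ are $O(\varepsilon^{N-\delta-m-1})$ or smaller and so vanish as $\varepsilon\to0$ once $N>\delta+m+1$. You also need the vanishing of $\psi$ at $s=1$ to bound $\psi^{(m)}(0)$ and $\psi^{(k)}$, $k\le M$, by $\|\psi^{(M)}\|_\infty$.

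Those two support arguments are what make your constant uniform in the amplitude. Your proof therefore shows that the classical endpoint expansion does give the required uniform bound when carried out with this care. The paper's subsequent remark sets that expansion aside because the standard references do not track the amplitude dependence.
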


\begin{proof}
We follow the construction used in
\cite[Theorem~2.10]{IoLi2014}. In the notation of that theorem,
$\lambda=\delta+1$. Its stated range $0<\lambda<1$ corresponds to
$-1<\delta<0$; the same argument applies when $\delta\geq0$ after
modifying the estimate for the successive primitives.

Fix $t\geq1$ and set
\begin{equation*}
    g(z):=e^{itz}z^\delta,
\end{equation*}
where $z^\delta$ denotes the principal branch on
$\mathbb{C}\setminus(-\infty,0]$. For $k\geq1$ and $s>0$, define
\begin{equation*}
    (\mathcal A_t^k g)(s)
    :=
    \frac{1}{(k-1)!}
    \int_{s+i\infty}^{s}
    (s-z)^{k-1}g(z)\,dz,
\end{equation*}
where the integral is taken downwards along the vertical ray
$\{s+i\xi:\xi\geq0\}$. Making the substitution $z=s+i\xi$, we obtain
\begin{equation}\label{eq_vertical_primitive}
    (\mathcal A_t^k g)(s)
    =
    \frac{e^{its-\frac{ik\pi}{2}}}{(k-1)!}
    \int_0^\infty
    e^{-t\xi}\xi^{k-1}(s+i\xi)^\delta\,d\xi .
\end{equation}
Letting $s\to 0$ in \eqref{eq_vertical_primitive} gives
\begin{equation}\label{eq_vertical_primitive_endpoint}
    (\mathcal A_t^k g)(0) =
    \frac{e^{-\frac{ik\pi}{2}}}{(k-1)!}
    \int_0^\infty
    e^{-t\xi}\xi^{k-1}(i\xi)^\delta\,d\xi  =
    e^{\frac{i\pi}{2}(\delta-k)}
    \frac{\Gamma(k+\delta)}{(k-1)!}
    t^{-k-\delta}.
\end{equation}
We next estimate $\mathcal A_t^k g$. If $-1<\delta<0$, then $(s^2+\xi^2)^{\delta/2}\leq s^\delta$, and hence 
\begin{equation}\label{eq_negative_delta_primitive}
    |(\mathcal A_t^k g)(s)|
    \leq s^\delta t^{-k}.
\end{equation}
If $\delta\geq0$, then $(s^2+\xi^2)^{\delta/2}
    \leq C_\delta(s^\delta+\xi^\delta)$, so \eqref{eq_vertical_primitive} yields
\begin{equation}\label{eq_positive_delta_primitive}
    |(\mathcal A_t^k g)(s)|
    \leq
    C_{\delta,k}
    \left(
        s^\delta t^{-k}
        +t^{-k-\delta}
    \right).
\end{equation}
It follows from \eqref{eq_negative_delta_primitive} and
\eqref{eq_positive_delta_primitive} that, for every $\delta>-1$,
\begin{equation}\label{eq_integrated_primitive}
    \int_0^1 |(\mathcal A_t^k g)(s)|\,ds
    \leq C_{\delta,k}t^{-k},
    \qquad t\geq1.
\end{equation}
The functions $\mathcal A_t^k g$ are successive primitives of $g$:
\begin{equation*}
    \frac{d}{ds}\mathcal A_t^1g(s)=g(s),
    \qquad
    \frac{d}{ds}\mathcal A_t^kg(s)
    =\mathcal A_t^{k-1}g(s),
    \quad k\geq2.
\end{equation*}
Applying integration by parts repeatedly on $[\varepsilon,1]$ and
then letting $\varepsilon\to 0$, justified by the estimates
above, gives
\begin{align*}
    \int_0^1\psi(s)g(s)\,ds =
    \left[
        \sum_{m=0}^{M-1}
        (-1)^m\psi^{(m)}(s)
        (\mathcal A_t^{m+1}g)(s)
    \right]_{s=0}^{s=1} + (-1)^M
    \int_0^1
    \psi^{(M)}(s)(\mathcal A_t^Mg)(s)\,ds .
\end{align*}
Since $\psi\in C_c^\infty((-1,1))$, all the boundary terms at
$s=1$ vanish. Moreover, by
\eqref{eq_vertical_primitive_endpoint},
\begin{align*}
    (-1)^{m+1}(\mathcal A_t^{m+1}g)(0) =
    e^{\frac{i\pi}{2}(\delta+m+1)}
    \frac{\Gamma(\delta+m+1)}{m!}
    t^{-1-\delta-m} =c_{m,\delta}t^{-1-\delta-m}.
\end{align*}
Consequently, \eqref{eq_oscillatory_expansion} holds with
\begin{equation*}
    R_M(t)
    :=
    (-1)^M
    \int_0^1
    \psi^{(M)}(s)(\mathcal A_t^Mg)(s)\,ds.
\end{equation*}
Finally, \eqref{eq_integrated_primitive} implies
\begin{equation*}
    |R_M(t)|
    \leq
    \|\psi^{(M)}\|_\infty
    \int_0^1|(\mathcal A_t^Mg)(s)|\,ds
    \leq
    C_{\delta,M}t^{-M}\|\psi^{(M)}\|_\infty,
\end{equation*}
which proves \eqref{eq_oscillatory_remainder}.
\end{proof}

% \begin{remark}
% Classical Erd\'elyi-type endpoint expansions (see \cite{Er1956}) for oscillatory integrals of this form can be found, for example, in \cite[p.~355, Section~5.1(d)]{Stein} and \cite[Example~7.1.17]{Hormander}. In particular, these results give the required leading term, and indeed a full asymptotic expansion, when the amplitude is fixed. In our applications below, however, the amplitude depends on the spatial variables $(x,y)$ and, after rescaling, also on $R$. Thus, a remainder whose implicit constant is allowed to depend on the amplitude does not by itself provide the uniform estimates required below. Lemma~\ref{lem:osc-expansion-order-M} makes this dependence explicit. Following the method of Iosevich and Liflyand, its proof constructs successive primitives of $e^{its}s^\delta$ by integration along a vertical ray in the upper half-plane. Their endpoint values are evaluated by Gamma integrals, while repeated integration by parts yields a remainder controlled by $t^{-M}\|\psi^{(M)}\|_\infty$.
% \end{remark}

\begin{remark}
Classical Erd\'elyi-type endpoint expansions for oscillatory integrals of this kind (see \cite{Er1956}) are given, for example, in \cite[p.~355, Section~5.1(d)]{Stein} and \cite[Example~7.1.17]{Hormander}. For a fixed amplitude, they yield both the required leading term and a full asymptotic expansion. In our applications, however, the amplitude depends on the spatial variables $(x,y)$ and, after rescaling, on $R$. A remainder estimate whose implicit constant may depend on the amplitude is therefore insufficient for the uniform bounds needed below. Lemma~\ref{lem:osc-expansion-order-M} makes this dependence explicit. Following Iosevich and Liflyand, its proof constructs successive primitives of $e^{its}s^\delta$ by integration along a vertical ray in the upper half-plane, evaluates their endpoint values using Gamma integrals, and obtains a remainder bounded by $t^{-M}\|\psi^{(M)}\|_\infty$ through repeated integration by parts.
\end{remark}

We now state the main result of this section.

\begin{theorem}\label{thm:BRR}
Assume that $d_1,d_2>2$.
The Bochner--Riesz means of order $\delta$, $\sigma_{R}^\delta(\Lapd)$, is bounded in $L^p$ uniformly in $R>0$, that is,
\begin{equation*}
    \sup_{R>0}\left\|\left(1-\frac{\Lapd}{R^2}\right)_+^\delta\right\|_{p\to p}<\infty,
\end{equation*}
provided that
\begin{equation}\label{BRrange}
\delta>\max\left(D\left|\frac1p-\frac12\right|-\frac12,0\right)
\quad\text{and}\quad
\delta \ge |d_1-d_2|\left|\frac1p-\frac12\right|,
\tag{9}
\end{equation}
where \(D=d_1\vee d_2\).
\end{theorem}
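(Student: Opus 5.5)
The plan is to follow the decomposition of Section~\ref{ssec2.5},
\[
\sigma_R^\delta(\Lapd)=\sigma_R^{\delta,kl}(\Lapd)+\sigma_{\mathrm{on}}^\delta(R)+\sigma_{\mathrm{off}}^\delta(R),
\]
and to estimate each of the three pieces uniformly in $R$.

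\textbf{The $kl$ part.} By \eqref{eq:Lapd-decomposition}, $\sigma_R^{\delta,kl}(\Lapd)$ is a sum of compressions to each end of Bochner--Riesz means for the radial Neumann Laplacians $\Delta^{(0,\infty)}_{\Neu,d_i}$. These have the $ll$ spectral measure \eqref{spectral_measure}. The classical radial Bochner--Riesz theorem (Herz, together with the computations in \cite{COSY}) then gives uniform boundedness for $\delta>\max(d_i|1/p-1/2|-1/2,0)$. This holds for both ends under the first condition in \eqref{BRrange}. Equivalently, one can integrate the projections of Theorem~\ref{herz_kl} against $\partial_\lambda(1-\lambda^2/R^2)^\delta$ in the range where those projections are bounded, and use the standard radial kernel bounds $|x|^{-(d+1)/2}\,|y|^{\cdots}\,(1+R\,||x|-|y||)^{-1-\delta}$ outside it.

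\textbf{The $kk$ part: reduction.} For the $kk$ pieces I would rescale $\lambda=Rs$. In the representative kernels, written with $\psi(s)=(1-s^2)^\delta$ and the large-argument asymptotics of $A$, $B_\ell$, $k_\ell$, the phase is $e^{-iR s(|x|+|y|-2)}$. The key gain from Lemma~\ref{lem:osc-expansion-order-M} is then a factor $(1+R(|x|+|y|-2))^{-1-\delta}$, coming from the endpoint singularity at $s=1$. This replaces the factor $(|x|+|y|-2)^{-1}$ in the proof of Theorem~\ref{herz_doubling}. The uniformity of the remainder in $\psi$ is exactly what this lemma provides. The low-energy pieces ($\lambda\le 1$, and the regimes $\lambda<|x|^{-1}$ or $\lambda<|y|^{-1}$) are split as in the proof of Theorem~\ref{herz_doubling}. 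They give kernels of the form $x^{-\alpha}y^{-\beta}$, which are controlled by Lemma~\ref{Nix}.

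\textbf{The $kk$ part: on-diagonal.} For the on-diagonal term, $B_\ell(i\lambda)\sim\lambda^{d_\ell-2}$ and both $k_\ell$ factors carry the weight $|\cdot|^{(1-d_\ell)/2}$. For $R\ge1$ the kernel is therefore bounded by
\[
R\,(|x||y|)^{\frac{1-d_\ell}{2}}\bigl(1+R(|x|+|y|-2)\bigr)^{-1-\delta}.
\]
This is handled by Lemma~\ref{He} (the homogeneous piece) and by Hardy--Littlewood--Pólya near the junction. It yields boundedness on $L^p(r^{d_\ell-1}dr)$ whenever $\delta>d_\ell|1/p-1/2|-1/2$ (Corollary~\ref{cor1}). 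For $R<1$ the same splitting as in the earlier proof applies, with low-energy bounds (Proposition~\ref{prop:smallR}).

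\textbf{The $kk$ part: off-diagonal (main obstacle).} The off-diagonal term is where \eqref{asym} enters. For $x\le-1$ and $y\ge1$, with $A\sim\lambda^{(d_1+d_2-4)/2}$, the large-$R$ kernel is about
\[
R^{1+\frac{d_1+d_2}{2}-\frac{d_1+d_2}{2}}\,|x|^{\frac{1-d_1}{2}}\,|y|^{\frac{1-d_2}{2}}\bigl(1+R(|x|+|y|)\bigr)^{-1-\delta}.
\]
It maps $L^p(|y|^{d_2-1}dy)$ to $L^p(|x|^{d_1-1}dx)$, i.e.\ between spaces of different dimension. After scaling by $R$ the powers of $R$ do not cancel: one is left with a factor $R^{(d_1-d_2)(1/p-1/2)}$ times a homogeneous-type operator, localized at $|x|,|y|\gtrsim R$ but not below. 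So I would split into the region $R(|x|+|y|)\lesssim$ large versus small. On the region $|x|+|y|\ge R^{-1}$ the decay $(R(|x|+|y|))^{-1-\delta}$ must absorb the dimension mismatch. I would apply Lemma~\ref{Nix} with $n_1=d_2$, $n_2=d_1$: its condition $p(\alpha+\beta-n_1)\ge n_2-n_1$ is precisely where the endpoint $\delta\ge|d_1-d_2||1/p-1/2|$ (non-strict) appears. This is why Lemma~\ref{Nix} was extended to the endpoint. The $p$-range conditions of that lemma should reduce to $\delta>D|1/p-1/2|-1/2$. Both directions ($d_1\to d_2$ and $d_2\to d_1$) and both $p<2$ and $p>2$ are handled by duality. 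This step, and the bookkeeping that the exponents coming out of Lemma~\ref{lem:osc-expansion-order-M} exactly match the hypotheses of Lemma~\ref{Nix} uniformly in $R$ (Proposition~\ref{prop:largeR}), is where I expect the main difficulty; the remainder terms from Section~\ref{appendix} only improve the decay.
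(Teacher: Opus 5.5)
Your architecture matches the paper's. The $kl$ part goes through the classical radial theorem, the $kk$ part is split into on- and off-diagonal pieces, the endpoint expansion of Lemma~\ref{lem:osc-expansion-order-M} supplies the gain, and the endpoint case of Lemma~\ref{Nix} produces \eqref{asym}. For $R>1$ your off-diagonal argument is essentially Lemma~\ref{lem:high-largeR}, and the rescaling heuristic is not needed there. Away from the junction the kernel is dominated, uniformly in $R>1$, by $|x|^{\frac{1-d_\ell}{2}}|y|^{\frac{1-d_j}{2}}(|x|+|y|)^{-1-\delta}$, since the leading term carries $R^{-\delta}\le 1$.

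The gap is the regime $0<R\le 1$, which you do not actually handle. Your split at $|x|+|y|\sim R^{-1}$ is empty for $R>1$, because $|x|+|y|\ge 2$. For $R<1$ your high-energy model is the wrong kernel, and it is not uniformly bounded. Take $f=\mathbf 1_{[R^{-1},2R^{-1}]}$ on the $d_2$-end and measure the output on $[R^{-1},2R^{-1}]$ on the $d_1$-end. The positive kernel $R|x|^{\frac{1-d_1}{2}}|y|^{\frac{1-d_2}{2}}(1+R(|x|+|y|))^{-1-\delta}$ then gives a norm ratio $\approx R^{(d_1-d_2)(\frac12-\frac1p)}$. So one of the two directions grows like $R^{-|d_1-d_2||\frac1p-\frac12|}$ as $R\to 0$.

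It is also not true that, for $R\le1$, the low-energy pieces split as in Theorem~\ref{herz_doubling} just give power kernels. The endpoint $\lambda=R$ of $(1-\lambda^2/R^2)^\delta$ now lies inside the low-energy region. On the piece where $\lambda|x|,\lambda|y|\ge 1$, that splitting yields only projection-type decay $(|x|+|y|)^{-1}$. This is insufficient: the corresponding positive operator is unbounded for $p\neq 2$ when $d_1\neq d_2$ (cf.\ Theorem~\ref{fefferman_nondoubling}).

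What is missing is the low-energy form of the off-diagonal kernel and the way powers of $R$ are traded.
\begin{itemize}
\item For $\lambda\le 1$ one has $A(i\lambda)\sim\lambda^{d_1+d_2-4}$. Where both Bessel arguments are large, the amplitude is $\lambda^{\frac{d_\ell+d_j}{2}-2}$ with phase $e^{-i\lambda(|x|+|y|)}$.
\item Lemma~\ref{lem:osc-expansion-order-M} at $\lambda=R$ then gives the leading bound $R^{\frac{d_\ell+d_j}{2}-2-\delta}|x|^{-\delta-\frac{d_\ell+1}{2}}|y|^{\frac{1-d_j}{2}}$ for $|x|\ge|y|$.
\item The extra factor $R^{\frac{d_\ell+d_j}{2}-2}$, a positive power because $d_1,d_2>2$, is what makes uniformity possible. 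If $\delta>\frac{d_\ell+d_j}{2}-2$, convert the negative power of $R$ into a power of $|y|$ using $R|y|\gtrsim 1$; otherwise use $R\le 1$.
\item Only then apply Lemma~\ref{Nix}, whose endpoint hypothesis again produces \eqref{asym}; this is Lemma~\ref{lem:II-smallR}. With your amplitude $\lambda^0$, the same trade gives $\alpha+\beta=\frac{d_\ell+d_j}{2}$. The hypothesis $p(\alpha+\beta-n_1)\ge n_2-n_1$ then fails in one of the two directions for every $p\neq 2$.
\item The mixed regime $R|y|\lesssim 1\lesssim R|x|$ also needs the endpoint expansion, with the power of $R$ absorbed via $R|y|\le\frac34$ (Lemma~\ref{lem:I-smallR}).
\end{itemize}

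A smaller slip: on the diagonal, $(|x||y|)^{\frac{1-d_\ell}{2}}(|x|+|y|)^{-1-\delta}$ is homogeneous of degree $-(d_\ell+\delta)$, so Lemma~\ref{He} does not apply to it. Forcing degree $-d_\ell$ discards $\delta$ and returns only Herz's range. The Bochner--Riesz range comes from Lemma~\ref{Nix}.
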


\begin{proof}%[Proof of Theorem~\ref{thm:BRR}]
We begin with some standard reductions. By the decomposition from Subsection~\ref{ssec2.5}
\begin{equation*}
    \sigma_{R}^\delta(\Lapd)
=
\sigma_{R}^{\delta,kl}(\Lapd) + \sigma_{R}^{\delta,kk}(\Lapd).
\end{equation*}
As in the previous sections, it suffices to consider the $kk$-part. Indeed, the $kl$-part can be handled by writing
\begin{equation*}
    \sigma_{R}^{\delta,kl}(\Lapd) = M_{\chi_{(-\infty,-1]}} \sigma_{R}^{\delta,kl}\big(\Delta^{(-\infty,0)}_{\mathrm{Neu},d_1}\big)  M_{\chi_{(-\infty,-1]}} + M_{\chi_{[1,\infty)}} \sigma_{R}^{\delta,kl}\big(\Delta^{(0,\infty)}_{\mathrm{Neu},d_2}\big)  M_{\chi_{[1,\infty)}},
\end{equation*}
and we notice that $\sigma_{R}^{\delta,kl}\big(\LapoN \big)$ is nothing but the usual Bochner--Riesz operator on $\mathbb{R}^d$ restricted to radial functions. 

Now, we focus on the $kk$-part. We need to estimate $\sigma_{\mathrm{on}}^\delta(R)$ and $\sigma_{\mathrm{off}}^\delta(R)$, where 
\begin{align*}
    \sigma_{\mathrm{off}}^\delta(R)(x,y) &= \sum_{1\le j \ne \ell \le 2} \int_0^R \left( 1 - \frac{\lambda^2}{R^2} \right)^\delta \lambda \left[ A(i \lambda) k_\ell(i \lambda |x|) k_j(i \lambda |y|) - A(-i \lambda) k_\ell(-i \lambda |x|) k_j(-i \lambda |y|)    \right] d\lambda\\
    &= \sum_{j\ne \ell} \sum_{\pm} \pm \int_0^R \left( 1 - \frac{\lambda^2}{R^2} \right)^\delta \lambda  A(\pm i \lambda) k_\ell(\pm i \lambda |x|) k_j(\pm i \lambda |y|)  d\lambda\\
    &:= \sum_{j\ne \ell} \sum_{\pm} \sigma_{\mathrm{off}}^{\ell j, \pm}(R)(x,y),
\end{align*}
and
\begin{align*}
    \sigma_{\mathrm{on}}^\delta(R)(x,y) &= \sum_{\ell =1}^2 \int_0^R \left( 1 - \frac{\lambda^2}{R^2} \right)^\delta \lambda \left[ B_\ell(i \lambda) k_\ell(i \lambda |x|) k_\ell(i \lambda |y|) - B_\ell(-i \lambda) k_\ell(-i \lambda |x|) k_\ell(-i \lambda |y|)    \right] d\lambda\\
    &= \sum_{\ell=1}^2 \sum_{\pm} \pm \int_0^R \left( 1 - \frac{\lambda^2}{R^2} \right)^\delta \lambda  B_\ell(\pm i \lambda) k_\ell(\pm i \lambda |x|) k_\ell(\pm i \lambda |y|)  d\lambda\\
    &:= \sum_{\ell=1}^2 \sum_{\pm} \sigma_{\mathrm{on}}^{\ell, \pm}(R)(x,y).
\end{align*}

Since the two sign contributions are estimated in the same way, it is
enough to consider the \(+\) part. Moreover, the coefficient functions
\(A(i\lambda)\), \(B_1(i\lambda)\), and \(B_2(i\lambda)\) have the same
type of asymptotic behaviour:
\[
    \begin{cases}
        |\lambda|^{d_\ell+d_j-4}, & |\lambda|\le 1,\\[1mm]
        |\lambda|^{\frac{d_\ell+d_j}{2}-2}e^{2i\lambda},
        & |\lambda|\ge 1.
    \end{cases}
\]
Thus it suffices to study
\(\sigma_{\mathrm{off}}^{\ell j,+}(R)\).

Theorem~\ref{thm:BRR} then follows from
Propositions~\ref{prop:smallR} and~\ref{prop:largeR} below, together with
Corollary~\ref{cor1}.

\end{proof}

To prove the required uniform \(L^p\)-bounds for
\(\sigma_{\mathrm{off}}^{\ell j,+}(R)\), we treat separately the
low-energy case \(0<R\leq 1\) and the high-energy case \(R>1\), as in
Propositions~\ref{prop:smallR} and~\ref{prop:largeR}, respectively.

\begin{remark}
The decomposition into the two cases $0<R\le 1$ and $R>1$ is not merely technical; it reflects two genuinely different regimes of the spectral parameter $\lambda$. Indeed, the Bochner--Riesz kernel is obtained by integrating over $0\le \lambda\le R$, and the coefficient functions $A$, $B_1$, $B_2$ as well as the special functions $k_\ell(i\lambda|x|)$, $k_j(i\lambda|y|)$ change their behaviour at the threshold $\lambda\sim 1$.

When $0<R\le 1$, the whole integral lies in the low-energy region $\lambda\le 1$. In this regime the coefficients are described only by their small-argument asymptotics, and the main issue is to understand how this low-frequency behaviour interacts with the spatial scales $R|x|$ and $R|y|$. After the rescaling $\lambda\mapsto R\lambda$, the parameter $R$ enters only through these quantities, so the analysis is essentially a low-energy one.

By contrast, when $R>1$, the integration interval $[0,R]$ crosses the threshold $\lambda=1$ and therefore contains both low and high frequencies. In particular, the part $\lambda\gtrsim 1$ is genuinely oscillatory: the large-argument asymptotics of $A$, $B_1$, $B_2$ and of the functions $k_\ell$, $k_j$ have to be used, and one gains cancellation from oscillation, typically after integrating by parts or applying Lemma~\ref{lem:osc-expansion-order-M} near the endpoint. Thus the case $R>1$ is not a rescaled version of the case $R<1$; it involves a different mechanism, namely the coexistence of low-energy singular behaviour and high-frequency oscillation.

For this reason the proof naturally splits into the two propositions corresponding to $0<R\le 1$ and $R>1$.
\end{remark}

\begin{proposition}\label{prop:smallR}
Assume $0<R\le 1$. Then $\sigma_{\mathrm{off}}^{\ell j,+}(R)$ defines a bounded operator on $L^p$, uniformly in $R$, for all $p$ satisfying \eqref{BRrange}.
\end{proposition}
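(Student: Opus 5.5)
With $|x|,|y|\ge 1$ on different ends and $\lambda\le R\le 1$, only the small-argument asymptotics of $A$ enter, so the plan is to bound the kernel of $\sigma_{\mathrm{off}}^{\ell j,+}(R)$ pointwise and then apply Lemma~\ref{Nix} and Lemma~\ref{He}. Substituting $A(i\lambda)\sim\lambda^{d_\ell+d_j-4}$ and rescaling $\lambda=R s$ gives
\begin{equation*}
\sigma_{\mathrm{off}}^{\ell j,+}(R)(x,y)=R^{d_\ell+d_j-2}\int_0^1(1-s^2)^\delta s^{d_\ell+d_j-3}\,k_\ell(iRs|x|)\,k_j(iRs|y|)\,ds,
\end{equation*}
with lower order remainders from Subsection~\ref{appendix} treated the same way. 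Assume $|x|\ge|y|$; the other case is symmetric. I would split $[0,1]$ at $s=(R|x|)^{-1}$ and $s=(R|y|)^{-1}$ and use the asymptotics of $k$ on each piece, as in the proof of Theorem~\ref{herz_doubling}.

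\textbf{Non-oscillatory pieces.} On $s\le (R|x|)^{-1}$ both $k$ factors are non-oscillatory, which gives a bound $|x|^{2-d_\ell}|y|^{2-d_j}(|x|^{-2}\wedge R^2)$. The middle piece has one oscillating factor $e^{-iRs|x|}$. Integration by parts, as for $\mathcal I^2_{\mathrm{off}}$, gives a product bound $|x|^{-a}|y|^{-b}$ with $a+b$ determined by homogeneity.

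\textbf{Both-oscillatory piece.} On $s\ge (R|y|)^{-1}$ the kernel is $(|x||y|)^{(1-d_\ell)/2}$-type, more precisely
\begin{equation*}
R^{\frac{d_\ell+d_j}{2}-1}|x|^{\frac{1-d_\ell}{2}}|y|^{\frac{1-d_j}{2}}\int (1-s^2)^\delta s^{\frac{d_\ell+d_j}{2}-2}e^{-iRs(|x|+|y|)}\,ds.
\end{equation*}
I would handle the endpoint $s=1$ with Lemma~\ref{lem:osc-expansion-order-M}, applied with $t=R(|x|+|y|)$ when $t\ge1$. It gives decay $(R(|x|+|y|))^{-1-\delta}$ from the endpoint and $(R(|x|+|y|))^{-1}$ from the lower cutoff. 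The kernel is thus bounded by
\begin{equation*}
R^{\frac{d_\ell+d_j}{2}-1}|x|^{\frac{1-d_\ell}{2}}|y|^{\frac{1-d_j}{2}}\,\min\bigl(1,(R(|x|+|y|))^{-1-\delta}\bigr)
\end{equation*}
plus terms of the previous type.

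\textbf{Boundedness.} The task is now to show each bound is $L^p(r^{d_j-1}dr)\to L^p(r^{d_\ell-1}dr)$ bounded uniformly in $R$. This is the main obstacle, and it is where the asymmetric condition $\delta\ge|d_1-d_2||1/p-1/2|$ must enter. The kernel is not homogeneous of the right degree $-(d_\ell/p+d_j/p')$: its degree is $-(d_\ell+d_j)/2-\delta$ at large scale, plus a power of $R$. My first attempt would be to dilate $x\mapsto x/R$, $y\mapsto y/R$. The $L^p$ dilation factors produce $R^{d_\ell/p-d_j/p}$ relative to the natural scaling, and the domain becomes $[R,\infty)^2$. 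On the region $|x|+|y|\gtrsim 1$, write the resulting kernel as $(xy)$-powers times $(x+y)^{-1-\delta}$. One must then interpolate between the homogeneous degree $-(d_\ell/p+d_j/p')$ required by Lemma~\ref{He} and the actual degree. The gap is exactly $(d_\ell-d_j)(1/p-1/2)$, and it is absorbed by the extra decay $\delta$ precisely when $\delta\ge|d_\ell-d_j||1/p-1/2|$. This uses that $x,y\ge R$ and $R\le1$, so the leftover power of $R$ is $\le1$, with equality allowed via the endpoint case of Lemma~\ref{Nix}.

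\textbf{Remaining conditions.} The integrability conditions in Lemma~\ref{He} and Lemma~\ref{Nix} then reduce to $\delta>D|1/p-1/2|-1/2$. On the region $R|x|\lesssim 1$ the kernel is a product of powers, and Lemma~\ref{Nix} applies directly within the Herz range, which contains \eqref{BRrange}. Verifying these exponent bookkeeping inequalities for each piece is routine.
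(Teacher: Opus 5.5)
There is a genuine gap, and it comes from importing the sharp splitting of Theorem~\ref{herz_doubling}. You cut $[0,1]$ at $s=(R|x|)^{-1}$ and $s=(R|y|)^{-1}$, replace $k_\ell,k_j$ by different asymptotic forms on each side, and integrate by parts piece by piece. This produces boundary terms at the cut points that carry no $\delta$-gain.

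For $|x|\ge|y|$ and $R|y|\gg1$, the upper end of your middle piece contributes a kernel of size $|x|^{-\frac{d_\ell+1}{2}}|y|^{2-d_j-\frac{d_\ell-1}{2}}$, independent of $R$; this is the analogue of the $\mathcal I^2_{\mathrm{off}}$ bound. The lower end of your both-oscillatory piece (your ``$(R(|x|+|y|))^{-1}$ from the lower cutoff'') gives the same kernel. It also appears when $R|y|<1\le R|x|$: then the endpoint $s=1$ lies inside your middle piece, which you only integrate by parts, so you get decay $(R|x|)^{-1}$ instead of $(R|x|)^{-1-\delta}$.

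This positive kernel has $\alpha'=\frac{d_\ell+1}{2}$ in Lemma~\ref{Nix}. It is bounded only for $p>\frac{2d_\ell}{d_\ell+1}$, and testing on the indicator of a dyadic interval in $y$ shows this is sharp. Symmetrically it needs $p<\frac{2d_j}{d_j-1}$, so it is bounded only in the Herz range. Your closing claim that the Herz range contains \eqref{BRrange} is backwards. For $\delta>0$ the range \eqref{BRrange} is strictly larger; for $\delta>\frac{D-1}{2}$ it is every $1<p<\infty$. So these terms defeat the proposition exactly where it goes beyond Theorem~\ref{herz_doubling}. The true kernel has no such terms, because its amplitude is smooth in $\lambda$. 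But with sharp cuts and mismatched asymptotic forms they do not cancel, and your plan never shows that they do.

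What is missing is a smooth partition that leaves the endpoint $\lambda=1$ as the only source of boundary contributions. The paper splits with $\phi(\lambda R|x|)$ and $\phi(\lambda R|y|)$ into both-small, one-large and both-large regimes. In the two oscillatory regimes it inserts $\rho(1-\lambda)$ to isolate the endpoint.

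Away from the endpoint, the amplitudes are smooth and compactly supported in $(0,1)$. So $N$-fold integration by parts has no boundary terms and gives $O(|x|^{-d_\ell}|y|^{2-d_j})$, which is bounded for all $1<p<\infty$.

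Near the endpoint, the amplitudes have derivatives bounded uniformly in $x,y,R$, since derivatives falling on $\phi((1-s)R|x|)$ are nonzero only when $R|x|\sim1$. Lemma~\ref{lem:osc-expansion-order-M} then yields $(R|x|)^{-1-\delta}\mathbf 1_{R|y|\le 3/4}$ in the one-large regime and $(R(|x|+|y|))^{-1-\delta}$ in the both-large regime.

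The powers of $R$ are then removed using $R\le1$, $R|y|\le\frac34$ or $R|y|\gtrsim1$. This gives product kernels for Lemma~\ref{Nix}, whose condition $p(\alpha+\beta-n_1)\ge n_2-n_1$ becomes $\delta\ge(d_\ell-d_j)(\frac1p-\frac12)$. Your heuristic for the main term anticipates this correctly, and no dilation argument is needed.
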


\begin{proof}
In the following argument, we assume \(|x|\ge |y|\) and the case where $|x|\le |y|$ follows by symmetry. 

We consider
\begin{equation*}
\sigma_{\mathrm{off}}^{\ell j,+}(R)(x,y)
=
\int_0^R
\left(1-\frac{\lambda^2}{R^2}\right)^\delta
\lambda A(i\lambda)k_\ell(i\lambda|x|)k_j(i\lambda|y|)\, d\lambda.
\tag{10}
\end{equation*}
Let $\nu_j = d_j/2-1$. Recall that $k_j(r) = r^{-\nu_j}K_{\nu_j}(r)$. Using the low-energy asymptotics of $A$, we have
\begin{equation*}
\sigma_{\mathrm{off}}^{\ell j,+}(R)(x,y)
\sim
|x|^{1-\frac{d_\ell}{2}}|y|^{1-\frac{d_j}{2}}
\int_0^R
\left(1-\frac{\lambda^2}{R^2}\right)^\delta
\lambda^{\frac{d_\ell+d_j}{2}-1}
K_{\nu_\ell}(i\lambda|x|)K_{\nu_j}(i\lambda|y|) d\lambda,
\end{equation*}
and after the change of variables $\lambda\mapsto R\lambda$,
\begin{equation}\label{12}
=
R^{\frac{d_\ell+d_j}{2}}
|x|^{1-\frac{d_\ell}{2}}|y|^{1-\frac{d_j}{2}}
\int_0^1
(1-\lambda^2)^\delta
\lambda^{\frac{d_\ell+d_j}{2}-1}
K_{\nu_\ell}(i\lambda R|x|)K_{\nu_j}(i\lambda R|y|) d\lambda.
\end{equation}
Let $\phi\in C_c^\infty(\R)$ satisfy $0\le \phi\le 1$, 
\begin{align*}
    \phi(t) = \begin{cases}
        1, & t\in [0,\frac{1}{2}],\\
        0, & t\in (-\infty, -\frac{1}{2}] \cup [\frac{3}{4}, \infty).
\end{cases}
\end{align*}
We decompose
\begin{equation*}
    \eqref{12} = \mathcal{I}_{L,+}^R(x,y)+\mathcal{I}_{H,+}^R(x,y),
\end{equation*}
where
\begin{equation*}
    \mathcal{I}_{L,+}^R(x,y)
:=
R^{\frac{d_\ell+d_j}{2}}
|x|^{1-\frac{d_\ell}{2}}|y|^{1-\frac{d_j}{2}}
\int_0^1
\phi(\lambda R|x|)(1-\lambda^2)^\delta
\lambda^{\frac{d_\ell+d_j}{2}-1}
K_{\nu_\ell}(i\lambda R|x|)K_{\nu_j}(i\lambda R|y|) d\lambda,
\end{equation*}
and
\begin{equation*}
\mathcal{I}_{H,+}^R(x,y) :=
R^{\frac{d_\ell+d_j}{2}}
|x|^{1-\frac{d_\ell}{2}}|y|^{1-\frac{d_j}{2}}
\int_0^1
(1-\phi(\lambda R|x|))(1-\lambda^2)^\delta
\lambda^{\frac{d_\ell+d_j}{2}-1}
K_{\nu_\ell}(i\lambda R|x|)K_{\nu_j}(i\lambda R|y|) d\lambda.
\end{equation*}
We first estimate the kernel \(\mathcal{I}_{L,+}^R(x,y)\). Since \(\phi(\lambda R|x|)\) is supported in \(\lambda\lesssim (R|x|)^{-1}\), we may use the small-argument asymptotics of the Bessel functions. This yields
\begin{equation*}
|\mathcal{I}_{L,+}^R(x,y)|
\lesssim
R^{\frac{d_\ell+d_j}{2}}
|x|^{1-\frac{d_\ell}{2}}|y|^{1-\frac{d_j}{2}}
\int_0^{2R^{-1}|x|^{-1}}
\lambda^{\frac{d_\ell+d_j}{2}-1}
(\lambda R|x|)^{1-\frac{d_\ell}{2}}
(\lambda R|y|)^{1-\frac{d_j}{2}} d\lambda
\lesssim
|x|^{-d_\ell}|y|^{2-d_j}.
\end{equation*}
By Lemma~\ref{Nix}, $\mathcal{I}_{L,+}^R$ acts boundedly in $L^p$ for all $1<p<\infty$.

We next decompose $\mathcal{I}_{H,+}^R$ as
\begin{equation*}
    \mathcal{I}_{H,+}^R(x,y)=\mathcal{I}_{+}^R(x,y)+\mathcal{II}_{+}^R(x,y),
\end{equation*}
where
\begin{equation*}
\mathcal{I}_{+}^R(x,y) :=
R^{\frac{d_\ell+d_j}{2}}
|x|^{1-\frac{d_\ell}{2}}|y|^{1-\frac{d_j}{2}}
\int_0^1
(1-\phi(\lambda R|x|))\phi(\lambda R|y|)(1-\lambda^2)^\delta
\lambda^{\frac{d_\ell+d_j}{2}-1}
K_{\nu_\ell}(i\lambda R|x|)K_{\nu_j}(i\lambda R|y|) d\lambda,
\end{equation*}
and
\begin{equation*}
\mathcal{II}_{+}^R(x,y) :=
R^{\frac{d_\ell+d_j}{2}}
|x|^{1-\frac{d_\ell}{2}}|y|^{1-\frac{d_j}{2}}
\int_0^1
(1-\phi(\lambda R|x|))(1-\phi(\lambda R|y|))(1-\lambda^2)^\delta
\lambda^{\frac{d_\ell+d_j}{2}-1}
K_{\nu_\ell}(i\lambda R|x|)K_{\nu_j}(i\lambda R|y|) d\lambda.
\end{equation*}

To continue the argument, we need the following observation.

\begin{lemma}\label{lem:I-smallR}
The kernel $\mathcal{I}_{+}^R(x,y)$ defines a bounded operator on $L^p$, uniformly in $0<R\le 1$, for all 
\begin{equation*}
    \delta>\max\left(D\left|\frac1p-\frac12\right|-\frac12,0\right),
\end{equation*}
where $D=d_1 \vee d_2$.
\end{lemma}

\begin{proof}
Using the large-argument asymptotics in the $x$-variable and the small-argument asymptotics in the $y$-variable, we obtain
\begin{equation*}
    \mathcal{I}_{+}^R(x,y)
\sim
R^{\frac{d_\ell+1}{2}}
|x|^{\frac{1-d_\ell}{2}}|y|^{2-d_j}
\int_0^1
(1-\phi(\lambda R|x|))\phi(\lambda R|y|)(1-\lambda^2)^\delta
\lambda^{\frac{d_\ell-1}{2}}e^{-i\lambda R|x|} d\lambda.
\end{equation*}
We first note that on the support of $\mathcal{I}_{+}^R(x,y)$, we must have $R|x|\gtrsim 1$ or $ 1 - \phi(\lambda R|x|) $ vanishes identically for all $0<\lambda<1$. 

Let $N \in \mathbb{N}$ and $N>\frac{d_\ell+1}{2}$. Fix $\rho \in C_c^\infty(\mathbb{R})$ such that
\begin{equation*}
    \rho(\lambda) = \begin{cases}
        1, & 0\le \lambda \le \frac{1}{8},\\
        0, & \lambda \ge \frac{1}{4}\quad \textrm{or}\quad \lambda \le -\frac{1}{2}.
    \end{cases}
\end{equation*}
We then further split 
\begin{align*}
    \int_0^1
(1-\phi(\lambda R|x|))\phi(\lambda R|y|)(1-\lambda^2)^\delta
\lambda^{\frac{d_\ell-1}{2}}e^{-i\lambda R|x|}\, d\lambda = I_0 + I_1,
\end{align*}
where
\begin{equation*}
    I_0 = \int_0^1 \rho(1-\lambda)
(1-\phi(\lambda R|x|))\phi(\lambda R|y|)(1-\lambda^2)^\delta
\lambda^{\frac{d_\ell-1}{2}}e^{-i\lambda R|x|} d\lambda.
\end{equation*}
We first treat $I_0$. After the change of variables $s=1-\lambda$, one yields
\begin{equation*}
    I_0 = e^{-iR|x|}
\int_0^\infty \psi_{x,y,R}(s)s^\delta e^{isR|x|} ds,
\end{equation*}
where
\begin{equation*}
\psi_{x,y,R}(s) =
\rho(s)(1-\phi((1-s)R|x|))\phi((1-s)R|y|)(2-s)^\delta(1-s)^{\frac{d_\ell-1}{2}}.
\end{equation*}
Note that $\psi_{x,y,R}$ is smooth and is supported in $[-\frac{1}{2},\frac{1}{4}]$.

We next claim that 
\begin{equation}\label{eq:uniform-a-derivative}
    \left\| \psi_{x,y,R}^{(m)} \right\|_\infty \le c_{m,\delta},\quad \forall m\ge 0.
\end{equation}
Indeed, on the support of $\psi_{x,y,R}$, functions $\rho(s), (1-s)^{\frac{d_\ell-1}{2}}, (2-s)^\delta$ are smooth and all of their derivatives are uniformly bounded. If the derivatives hit $1-\phi\left( (1-s) R|x| \right)$, then $\partial_s^m \left( \phi\left( (1-s) R|x| \right) \right) \ne 0$ only if $(1-s)R|x| \in [\frac{1}{2}, \frac{3}{4}]$. Now, since $1-s\in [\frac{3}{4},\frac{3}{2}]$, this forces $R|x|\in [\frac{1}{3}, 1]$ and hence the factor $(R|x|)^m$ is harmless and is uniformly bounded. The same argument applies to
$\phi((1-s)R|y|)$. This proves \eqref{eq:uniform-a-derivative}.

Thus, by Lemma~\ref{lem:osc-expansion-order-M},
\begin{align*}
    I_0 = e^{-iR|x|} \sum_{m=0}^{N-1} c_{m,\delta} \psi_{x,y,R}^{(m)}(0) (R|x|)^{-1-\delta-m} + O\left((R|x|)^{-N}\right).
\end{align*}
Observe that $|\psi_{x,y,R}^{(m)}(0)|$ is uniformly bounded and vanishes if $R|y|\ge 3/4$. We then bound $|I_0|$ by
\begin{equation}\label{A0}
    (R|x|)^{-1-\delta} \mathbf{1}_{R|y|\le \frac{3}{4}} + (R|x|)^{-N}.
\end{equation}
Next, we consider $I_1$. Define
\begin{equation*}
    \widetilde{\psi}_{x,y,R}(\lambda) = \left(1-\rho(1-\lambda)\right) (1-\phi(\lambda R|x|))\phi(\lambda R|y|)(1-\lambda^2)^\delta
\lambda^{\frac{d_\ell-1}{2}}.
\end{equation*}
Therefore
\begin{equation*}
    I_1 = \int_0^1 \widetilde{\psi}_{x,y,R}(\lambda) e^{-i\lambda R|x|} d\lambda.
\end{equation*}
We note that $\widetilde{\psi}_{x,y,R}$ is smooth and supported in $[\frac{1}{2R|x|}, \frac{7}{8}] \subset (0,1)$.

Integrating by parts $N$ times gives
\begin{align*}
    |I_1| &\lesssim (R|x|)^{-N} \int_0^1 \left| \frac{d^N}{d\lambda^N} \widetilde{\psi}_{x,y,R}(\lambda)     \right| d\lambda\\
    &\lesssim (R|x|)^{-N} \int_{(2R|x|)^{-1}}^1 \lambda^{\frac{d_\ell-1}{2} - N} d\lambda\\
    &\lesssim R^{-\frac{d_\ell+1}{2}} |x|^{-\frac{d_\ell+1}{2}}.
\end{align*}
Combining this and \eqref{A0}, we conclude for $|x|\ge |y|$
\begin{align}\label{I_+^R}
    |\mathcal{I}_+^R(x,y)| \lesssim R^{\frac{d_\ell-1}{2}-\delta} |x|^{-\delta - \frac{d_\ell+1}{2}} |y|^{2-d_j} \mathbf{1}_{R|y|\le \frac{3}{4}} + R^{\frac{d_\ell+1}{2}-N} |x|^{-\frac{d_\ell-1}{2}-N} |y|^{2-d_j} + |x|^{-d_\ell} |y|^{2-d_j}.
\end{align}
Since again $R|x|\gtrsim 1$, the last two terms above is $O(|x|^{-d_\ell} |y|^{2-d_j})$. Next, if $\delta > \frac{d_\ell-1}{2}$, then the first term of \eqref{I_+^R} is again bounded by $O(|x|^{-d_\ell} |y|^{2-d_j})$. While if $\delta \le \frac{d_\ell-1}{2}$, we use the fact 
\begin{equation*}
    R|y|\le \frac{3}{4} \implies R^{\frac{d_\ell-1}{2}-\delta} \lesssim |y|^{\delta - \frac{d_\ell-1}{2}}
\end{equation*}
to deduce that the first term of \eqref{I_+^R} is bounded by $|x|^{-\delta-\frac{d_\ell+1}{2}} |y|^{2-d_j+\delta-\frac{d_\ell-1}{2}}$.

It then follows by symmetry, we obtain 
\begin{align*}
    |\mathcal{I}_+^R(x,y)|\lesssim \begin{cases}
        |x|^{2-d_\ell+\delta-\frac{d_j-1}{2}} |y|^{-\delta-\frac{d_j+1}{2}} + |x|^{2-d_\ell}|y|^{-d_j}, & |x|\le |y|,\\
        |x|^{-\delta-\frac{d_\ell+1}{2}} |y|^{2-d_j+\delta-\frac{d_\ell-1}{2}} + |x|^{-d_\ell} |y|^{2-d_j}, & |x|\ge |y|.
    \end{cases}
\end{align*}
By Lemma~\ref{Nix}, $\mathcal{I}_+^R$ is bounded in $L^p$ for
\begin{equation*}
    \frac{d_\ell}{d_\ell \wedge \left(\frac{d_\ell+1}{2}+\delta\right)} < p < \frac{d_j}{0 \vee \left(\frac{d_j-1}{2} - \delta\right)}
\end{equation*}
provided
\begin{equation*}
    p\left( \delta + \frac{d_\ell+1}{2} + d_j - 2 -\delta + \frac{d_\ell-1}{2} - d_j \right) \ge d_\ell - d_j \iff p \ge \frac{d_\ell-d_j}{d_\ell-2}.
\end{equation*}
The proof of Lemma~\ref{lem:I-smallR} is now complete.

\end{proof}

\begin{lemma}\label{lem:II-smallR}
The kernel \(\mathcal{II}_{+}^R(x,y)\) defines a bounded operator on \(L^p\), uniformly in \(0<R\le 1\), for all \(p\) satisfying \eqref{BRrange}.
\end{lemma}

\begin{proof}
Using the large-argument asymptotics for Bessel functions, we obtain
\begin{equation*}
    \mathcal{II}_{+}^R(x,y)
\sim
R^{\frac{d_\ell+d_j}{2}-1}
|x|^{\frac{1-d_\ell}{2}}|y|^{\frac{1-d_j}{2}}
\int_0^1
(1-\phi(\lambda R|x|))(1-\phi(\lambda R|y|))(1-\lambda^2)^\delta
\lambda^{\frac{d_\ell+d_j}{2}-2}
e^{-i\lambda R(|x|+|y|)} d\lambda.
\end{equation*}
It is easy to see that on the support of $\mathcal{II}_+^R$, we must have $R|x|, R|y| \gtrsim 1$ or there is nothing to prove.

Decompose again via function $\rho(1-\lambda)$. We have
\begin{equation*}
    \mathcal{II}_+^R(x,y) \sim R^{\frac{d_\ell+d_j}{2}-1}
|x|^{\frac{1-d_\ell}{2}}|y|^{\frac{1-d_j}{2}} \left( II_0 + II_1 \right),
\end{equation*}
where
\begin{equation*}
    II_0 = \int_0^1 \rho(1-\lambda)
(1-\phi(\lambda R|x|))(1-\phi(\lambda R|y|))(1-\lambda^2)^\delta
\lambda^{\frac{d_\ell+d_j}{2}-2}
e^{-i\lambda R(|x|+|y|)} d\lambda.
\end{equation*}
We treat $II_0$ first. After the change of variables $s=1-\lambda$, it becomes
\begin{equation*}
    II_0 = e^{-iR(|x|+|y|)} \int_0^\infty \varphi_{x,y,R}(s) s^\delta e^{isR(|x|+|y|)} ds,
\end{equation*}
where
\begin{equation*}
    \varphi_{x,y,R}(s)
=
\rho(s) (1-\phi((1-s)R|x|))(1-\phi((1-s)R|y|))(2-s)^\delta(1-s)^{\frac{d_\ell+d_j}{2}-2}.
\end{equation*}
It is clear that $\varphi_{x,y,R}$ is smooth and supported in $[-\frac{1}{2},\frac{1}{4}]$ and by a similar argument as in the claim \eqref{eq:uniform-a-derivative}, we deduce $\left\| \varphi_{x,y,R}^{(m)}  \right\|_\infty \le c_{m,\delta}$ for all $m\ge 0$. Let $N \in \mathbb{N}$ such that $N>\frac{d_\ell+d_j}{2}-1$. Lemma~\ref{lem:osc-expansion-order-M} then gives 
\begin{equation*}
    II_0 = e^{-iR(|x|+|y|)} \sum_{m=0}^{N-1} c_{m,\delta} \varphi_{x,y,R}^{(m)}(0) \left(R(|x|+|y|)\right)^{-1-\delta-m} + O\left(\left(R(|x|+|y|)\right)^{-N}\right).
\end{equation*}
We then crudely bound $|II_0|$ by
\begin{equation}\label{II_0_crude}
    R^{-1-\delta}(|x|+|y|)^{-1-\delta} + R^{-N}(|x|+|y|)^{-N}.
\end{equation}
Next, we analyze $II_1$. Define
\begin{equation*}
    \widetilde{\varphi}_{x,y,R}(\lambda) = \left(1-\rho(1-\lambda)\right)
(1-\phi(\lambda R|x|))(1-\phi(\lambda R|y|))(1-\lambda^2)^\delta
\lambda^{\frac{d_\ell+d_j}{2}-2}.
\end{equation*}
Then 
\begin{equation*}
    II_1 = \int_0^1 \widetilde{\varphi}_{x,y,R}(\lambda) e^{-i\lambda R(|x|+|y|)} d\lambda.
\end{equation*}
Integration by parts $N$ times. We obtain
\begin{align*}
    |II_1|&\le R^{-N} \left( |x|+|y| \right)^{-N} \int_0^1 \left| \frac{d^N}{d\lambda^N}   \widetilde{\varphi}_{x,y,R}(\lambda)    \right| d\lambda\\
    &\lesssim R^{-N} \left( |x|+|y| \right)^{-N} \int_{(2R|y|)^{-1}}^1 \lambda^{\frac{d_\ell+d_j}{2}-2-N} d\lambda\\
    &\lesssim R^{1-\frac{d_\ell+d_j}{2}} |x|^{-N} |y|^{N+1-\frac{d_\ell+d_j}{2}}.
\end{align*}
With \eqref{II_0_crude}, we get for $|x|\ge |y|$
\begin{equation}\label{II_+^Rkernel}
    |\mathcal{II}_+^R(x,y)|\lesssim R^{\frac{d_\ell+d_j}{2}-2-\delta} |x|^{-\delta-\frac{d_\ell+1}{2}} |y|^{-\frac{d_j-1}{2}} + R^{\frac{d_\ell+d_j}{2}-1-N} |x|^{-N-\frac{d_\ell-1}{2}} |y|^{-\frac{d_j-1}{2}} + |x|^{-N-\frac{d_\ell-1}{2}} |y|^{N+1-\frac{d_\ell+2d_j-1}{2}}.
\end{equation}
Using estimates $R^{-\varepsilon}\lesssim |y|^\varepsilon$ and $|x|^{-1} \le |y|^{-1}$, we deduce the last two terms of \eqref{II_+^Rkernel} are $O\left(|x|^{-d_\ell}|y|^{2-d_j}\right)$. Next, for the first term, if $\delta > \frac{d_\ell+d_j}{2}-2$, then it is $O\left( |x|^{-\delta-\frac{d_\ell+1}{2}} |y|^{\delta+2-\frac{d_\ell+2d_j-1}{2}} \right)$. While if $\delta \le \frac{d_\ell+d_j}{2}-2$, it is $O\left( |x|^{-\delta-\frac{d_\ell+1}{2}} |y|^{-\frac{d_j-1}{2}} \right)$. 

Consequently
\begin{align*}
    |\mathcal{II}_+^R(x,y)|\lesssim \begin{cases} 
    |x|^{\delta+2-\frac{d_j+2d_\ell-1}{2}} |y|^{-\delta-\frac{d_j+1}{2}} + |x|^{-\frac{d_\ell-1}{2}} |y|^{-\delta-\frac{d_j+1}{2}} + |x|^{2-d_\ell} |y|^{-d_j}, & |x|\le |y|,\\
    |x|^{-\delta-\frac{d_\ell+1}{2}} |y|^{\delta+2-\frac{d_\ell+2d_j-1}{2}} + |x|^{-\delta-\frac{d_\ell+1}{2}} |y|^{-\frac{d_j-1}{2}} + |x|^{-d_\ell}|y|^{2-d_j}, & |x|\ge |y|.
    \end{cases}
\end{align*}
By Lemma~\ref{Nix}, it is bounded in $L^p$ for 
\begin{equation*}
    \frac{d_\ell}{d_\ell \wedge \left(\frac{d_\ell+1}{2}+\delta\right)} < p < \frac{d_j}{0 \vee \left(\frac{d_j-1}{2} - \delta\right)}
\end{equation*}
provided
\begin{equation*}
    p\left( \delta + \frac{d_\ell+1}{2} + \frac{d_\ell+2d_j-1}{2} - 2 -\delta -d_j\right) \ge d_\ell - d_j \iff p \ge \frac{d_\ell-d_j}{d_\ell-2},
\end{equation*}
and
\begin{equation*}
    p \left( \delta + \frac{d_\ell+1}{2} + \frac{d_j-1}{2} -d_j \right) \ge d_\ell-d_j \iff \delta \ge (d_\ell-d_j) \left( \frac{1}{p} - \frac{1}{2} \right),
\end{equation*}
and so it is bounded for $p$ in \eqref{BRrange}.

Lemma~\ref{lem:II-smallR} then follows immediately. 

\end{proof}

Combining Lemmas~\ref{lem:I-smallR} and \ref{lem:II-smallR}, we conclude that $\sigma_{\mathrm{off}}^{\ell j,+}(R)$ is bounded on $L^p$ (for $p$ in \eqref{BRrange}) uniformly for $0<R\le 1$. This completes the proof of Proposition~\ref{prop:smallR}.
\end{proof}

Next, we consider the case where $R>1$.

\begin{proposition}\label{prop:largeR}
Assume \(R>1\). Then $\sigma_{\mathrm{off}}^{ \ell j,+}(R)$ defines a bounded operator on $L^p$, uniformly in $R$, for all $p$ satisfying \eqref{BRrange}.
\end{proposition}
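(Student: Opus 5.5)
Assume $|x|\ge|y|$; the case $|x|\le|y|$ follows by symmetry. I would split the $\lambda$-integral defining $\sigma_{\mathrm{off}}^{\ell j,+}(R)$ at $\lambda=1$, writing
\[
\sigma_{\mathrm{off}}^{\ell j,+}(R)=\mathcal{J}_{\mathrm{low}}^R+\mathcal{J}_{\mathrm{high}}^R .
\]
I would do the split with a smooth cutoff $\chi(\lambda)$, equal to $1$ on $[0,1/2]$ and supported in $[0,1]$, so that no boundary terms appear when integrating by parts.

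\textbf{Low-energy piece.} On $\lambda\le1$ the factor $(1-\lambda^2/R^2)^\delta$ is smooth, with derivatives bounded uniformly in $R>1$. I would treat $\mathcal{J}_{\mathrm{low}}^R$ exactly as in the proof of Proposition~\ref{prop:smallR} with $R=1$. That is, insert the cutoffs $\phi(\lambda|x|)$ and $\phi(\lambda|y|)$ and use the small- and large-argument asymptotics. Since the amplitude no longer vanishes at the endpoint $\lambda=1$, plain integration by parts suffices in place of Lemma~\ref{lem:osc-expansion-order-M}. The resulting kernel bounds are
\[
|x|^{-d_\ell}|y|^{2-d_j}+\frac{(|x||y|)^{\frac{1-d}{2}}}{|x|+|y|}\quad\text{(with the exponents of the relevant ends)}.
\]
These are controlled by Lemma~\ref{Nix} and Lemma~\ref{He}, as in Theorem~\ref{herz_doubling}, in a range containing \eqref{BRrange}.

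\textbf{High-energy piece: reduction.} On $\lambda\ge1$, use $A(i\lambda)\sim\lambda^{\frac{d_\ell+d_j}{2}-2}e^{2i\lambda}$. For $k_\ell(i\lambda|x|)$ and $k_j(i\lambda|y|)$, use the large-argument asymptotics, since $\lambda|x|,\lambda|y|\ge1$ automatically as $|x|,|y|\ge1$. The powers of $\lambda$ then cancel:
\[
\mathcal{J}_{\mathrm{high}}^R(x,y)\sim |x|^{\frac{1-d_\ell}{2}}|y|^{\frac{1-d_j}{2}}\int_1^R\Bigl(1-\frac{\lambda^2}{R^2}\Bigr)^\delta e^{-i\lambda(|x|+|y|-2)}\,d\lambda .
\]
The lower-order terms of the expansions in subsection~\ref{appendix} carry extra powers $\lambda^{-1}$ and give better kernels. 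Rescaling $\lambda=R\mu$ and applying Lemma~\ref{lem:osc-expansion-order-M} at $\mu=1$ with $t=R(|x|+|y|-2)$, I would obtain, for $t\ge1$,
\[
R\,|x|^{\frac{1-d_\ell}{2}}|y|^{\frac{1-d_j}{2}}\Bigl[t^{-1-\delta}+t^{-N}\Bigr]
\;+\;\frac{|x|^{\frac{1-d_\ell}{2}}|y|^{\frac{1-d_j}{2}}}{|x|+|y|-2}.
\]
Here the last term comes from the non-oscillatory endpoint $\mu=1/R$, handled by integration by parts. For $t\le1$ the integral is trivially $O(R)$.

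\textbf{High-energy piece: bounding the kernels.} Near the junction ($|x|+|y|\le4$) all these kernels are bounded by a Hardy-type kernel, as in Theorem~\ref{herz_doubling}. The region $t\le 1$ gives $R\,\mathbf 1_{|x|+|y|-2\le R^{-1}}$, which is bounded by Schur's test. For $|x|+|y|\ge4$, the main term is
\[
R^{-\delta}|x|^{-\delta-\frac{d_\ell+1}{2}}|y|^{\frac{1-d_j}{2}}\le |x|^{-\delta-\frac{d_\ell+1}{2}}|y|^{\frac{1-d_j}{2}},
\]
using $R>1$. The term $(|x||y|)^{\frac{1-d}{2}}(|x|+|y|)^{-1}$ is homogeneous, and Lemma~\ref{He} gives $|1/p-1/2|<1/(2D)$, which is implied by \eqref{BRrange}.

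\textbf{Main obstacle.} The hard step is the main term $|x|^{-\delta-\frac{d_\ell+1}{2}}|y|^{\frac{1-d_j}{2}}$ in the region $|x|\ge|y|$, where it is not homogeneous of the right degree when $d_\ell\ne d_j$. I would apply Lemma~\ref{Nix} with $n_1=d_j$, $n_2=d_\ell$, $\alpha'=\delta+\frac{d_\ell+1}{2}$ and $\beta'=\frac{d_j-1}{2}$. The homogeneity condition
\[
p(\alpha'+\beta'-d_j)\ge d_\ell-d_j
\]
reduces exactly to $\delta\ge(d_\ell-d_j)\bigl(\tfrac1p-\tfrac12\bigr)$. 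The two orderings of $(\ell,j)$, together with the symmetric region and duality, yield \eqref{asym}. The integrability conditions of Lemma~\ref{Nix} give $\delta>D|1/p-1/2|-1/2$. What I must check is that the companion exponents $(\alpha,\beta)$ on $|x|\le|y|$ can be chosen with the same total degree, by borrowing powers of $|y|/|x|\le1$, without violating the range of Lemma~\ref{Nix}.
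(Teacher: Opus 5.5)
Your decomposition is the paper's: a smooth split at $\lambda\sim1$ into $\mathcal{III}_+^R$ and $\mathcal{IV}_+^R$, Lemma~\ref{lem:osc-expansion-order-M} at the endpoint $\mu=1$, a Hardy kernel near the junction, and Lemma~\ref{Nix} for the main kernel $|x|^{\frac{1-d_\ell}{2}}|y|^{\frac{1-d_j}{2}}(|x|+|y|)^{-1-\delta}$. However, one step fails exactly in the asymmetric case. In the low-energy piece, and again at the lower endpoint $\mu=1/R$ of the high-energy piece, you keep a term of size
\[
\frac{|x|^{\frac{1-d_\ell}{2}}|y|^{\frac{1-d_j}{2}}}{|x|+|y|}
\]
for $|x|+|y|\ge4$. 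This is what a single integration by parts, or the sharp-cutoff computation of Theorem~\ref{herz_doubling}, produces. You then bound it by Lemma~\ref{He}, which is wrong for $d_\ell\ne d_j$. The kernel is homogeneous of degree $-\frac{d_\ell+d_j}{2}$. Lemma~\ref{He} for $L^p(r^{d_j-1}dr)\to L^p(r^{d_\ell-1}dr)$ requires degree $-(\frac{d_\ell}{p}+\frac{d_j}{p'})$, and the two agree only at $p=2$.

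In fact this positive kernel is unbounded. It is the modulus of the Fefferman kernel of Theorem~\ref{fefferman_nondoubling}, and testing on $\mathbf 1_{[\varepsilon,2\varepsilon]}(y)\,y^{-\frac{d_j-1}{2}}$ gives growth $\varepsilon^{(d_\ell-d_j)(\frac1p-\frac12)}$. So for $d_\ell>d_j$ your majorant says nothing for any $p<2$, and for $d_\ell<d_j$ nothing for any $p>2$. Yet \eqref{BRrange} contains exponents on both sides of $2$. The gain from $\delta>0$ comes only from the genuine endpoint $\mu=1$. Every other contribution must therefore decay faster than $(|x|+|y|)^{-1-\delta}$.

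The repair, which is what the paper does in Lemmas~\ref{lem:low-largeR} and~\ref{lem:high-largeR}, uses the smoothness of the split. These contributions have no endpoint, so you must integrate by parts $N$ times, not once.

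In the high-energy piece, for $m\ge1$ the derivative $\partial_\mu^m\bigl(1-\chi(R\mu)\bigr)=O(R^m)$ is supported where $\mu\sim R^{-1}$. So the part away from $\mu=1$ is $O\bigl(R^{-1}(|x|+|y|-2)^{-N}\bigr)$. After multiplying by $R|x|^{\frac{1-d_\ell}{2}}|y|^{\frac{1-d_j}{2}}$, it is dominated on $|x|+|y|\ge4$ by the main kernel once $N>\delta+1$. The one-step bound $(|x|+|y|-2)^{-1}$ is then used only near the junction.

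In the low-energy piece, when both arguments are large the amplitude is supported in $\lambda\in[\frac{1}{2|y|},\frac34]$ (for $|x|\ge|y|$). Integrating by parts sufficiently many times gives only $|x|^{-d_\ell}|y|^{2-d_j}$, which is bounded for all $1<p<\infty$.

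Two smaller points. First, take the low-energy cutoff supported in $[0,\frac34]$ rather than $[0,1]$; otherwise $(1-\lambda^2/R^2)^\delta$ is not uniformly smooth as $R\downarrow1$. Second, your ``main obstacle'' is not one. On $|x|\le|y|$ the main kernel is bounded by $|x|^{\frac{1-d_\ell}{2}}|y|^{-\delta-\frac{d_j+1}{2}}$, which automatically has the same total degree, so Lemma~\ref{Nix} applies directly.
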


\begin{proof}
Again, by symmetry, it suffices to assume $|x|\ge |y|$. Recall
\begin{equation*}
\sigma_{\mathrm{off}}^{\ell j,+}(R)(x,y)
=
\int_0^R
\left(1-\frac{\lambda^2}{R^2}\right)^\delta
\lambda A(i\lambda)k_\ell(i\lambda|x|)k_j(i\lambda|y|) d\lambda
\end{equation*}
and decompose it as
\begin{equation*}
=\mathcal{III}_{+}^R(x,y)+\mathcal{IV}_{+}^R(x,y),
\end{equation*}
where
\begin{equation*}
    \mathcal{III}_{+}^R(x,y)
=
\int_0^R
\phi(\lambda)
\left(1-\frac{\lambda^2}{R^2}\right)^\delta
\lambda A(i\lambda)k_\ell(i\lambda|x|)k_j(i\lambda|y|) d\lambda,
\end{equation*}
and
\begin{equation*}
\mathcal{IV}_{+}^R(x,y)
=
\int_0^R
(1-\phi(\lambda))
\left(1-\frac{\lambda^2}{R^2}\right)^\delta
\lambda A(i\lambda)k_\ell(i\lambda|x|)k_j(i\lambda|y|) d\lambda.    
\end{equation*}

\begin{lemma}\label{lem:high-largeR}
The kernel $\mathcal{IV}_{+}^R(x,y)$ defines a bounded operator on $L^p$, uniformly in $R>1$, for all $p$ satisfying \eqref{BRrange}.
\end{lemma}

\begin{proof}
By the support property of $1-\phi$, we may use the large-argument asymptotics of $A$, $k_\ell$, and $k_j$ to obtain
\begin{align*}\nonumber
\mathcal{IV}_{+}^R(x,y)
&\sim
|x|^{-\frac{d_\ell-1}{2}}|y|^{-\frac{d_j-1}{2}}
\int_0^R
(1-\phi(\lambda))
\left(1-\frac{\lambda^2}{R^2}\right)^\delta
e^{-i\lambda(|x|+|y|-2)} d\lambda\\ 
&= R |x|^{-\frac{d_\ell-1}{2}}|y|^{-\frac{d_j-1}{2}}
\int_0^1
(1-\phi(R\lambda))
(1-\lambda^2)^\delta
e^{-iR\lambda(|x|+|y|-2)} d\lambda\\ \nonumber
&:= R |x|^{-\frac{d_\ell-1}{2}}|y|^{-\frac{d_j-1}{2}} \left(IV_0+IV_1\right),
\end{align*}
where
\begin{equation*}
    IV_0 = \int_0^1 \rho(1-\lambda)
(1-\phi(R\lambda))
(1-\lambda^2)^\delta
e^{-iR\lambda(|x|+|y|-2)} d\lambda.
\end{equation*}
For $IV_0$, after the change of variables $\lambda=(1-s)$, this becomes
\begin{equation*}
=e^{-iR(|x|+|y|-2)}
\int_0^\infty
\psi_R(s)s^\delta e^{isR(|x|+|y|-2)} ds,
\end{equation*}
where
\begin{equation*}
    \psi_R(s)=\rho(s)(1-\phi(R(1-s)))(2-s)^\delta.
\end{equation*}
Note that $\psi_R$ is smooth and compactly supported on $[-1,\frac{1}{4}]$. Moreover, $\left\|  \psi_R^{(m)}  \right\|_\infty \le c_{m,\delta}$ for all $m\ge 0$. Indeed, if derivatives hit $1-\phi(R(1-s))$, then $|\partial_s^m(1-\phi(R(1-s)))| = R^m |\phi^{(m)}(R(1-s))| \ne 0$ only if $R(1-s)\in [\frac{1}{2},\frac{3}{4}]$. Since $1-s\in [\frac{3}{4},2]$, this forces $R \lesssim 1$, making the term $R^m$ harmless.

Let $N\in \mathbb{N}$ and $N>\delta+1$. Suppose $|x|+|y|\ge 4$ (so that $R(|x|+|y|-2)\gtrsim 1$ and $|x|+|y|-2 \simeq |x|+|y|$). Applying Lemma~\ref{lem:osc-expansion-order-M}, we obtain 
\begin{align*}
    IV_0 = \sum_{m=0}^{N-1} c_{m,\delta} \psi_R^{(m)}(0) R^{-1-\delta-m} (|x|+|y|-2)^{-1-\delta-m} + O\left( R^{-N} (|x|+|y|-2)^{-N} \right),
\end{align*}
and hence
\begin{equation*}
    |IV_0|\lesssim R^{-1-\delta} (|x|+|y|)^{-1-\delta} + R^{-N} (|x|+|y|)^{-N}. 
\end{equation*}
As for $|x|+|y|<4$, integrating by parts once to yield (note that $s^\delta \psi_R(s)$ vanishes at $0$ and $1$)
\begin{align*}
    |IV_0| \le R^{-1}(|x|+|y|-2)^{-1} \int_0^1 \left|\frac{d}{ds} \left(s^\delta \psi_R(s)\right)\right| ds \lesssim R^{-1}(|x|+|y|-2)^{-1}
\end{align*}
since
\begin{equation*}
    \int_0^1 \left| \frac{d}{ds} \left( \psi_R(s) s^\delta \right) \right| ds \lesssim \int_0^1 s^\delta ds + \int_0^1 s^{\delta-1} ds \lesssim 1.
\end{equation*}
For $IV_1$, define
\begin{equation*}
    \widetilde{\psi}_R(\lambda) = \left(1-\rho(1-\lambda) \right)
(1-\phi(R\lambda))
(1-\lambda^2)^\delta.
\end{equation*}
We have by integration by parts (note that $\widetilde{\psi}_R$ is smooth and compactly supported in $[\frac{1}{2R}, \frac{7}{8}] \subset (0,1)$)
\begin{align*}
    |IV_1| &= \left|  \int_0^1 \widetilde{\psi}_R(\lambda)
e^{-iR\lambda(|x|+|y|-2)} d\lambda  \right|\\
&\le R^{-N}(|x|+|y|-2)^{-N} \int_0^1 \left|\frac{d^N}{d\lambda^N} \widetilde{\psi}_R(\lambda) \right| d\lambda \\
&\lesssim R^{-N}(|x|+|y|-2)^{-N} \int_{(2R)^{-1}}^1 \lambda^{-N} d\lambda\\
&\lesssim R^{-1}(|x|+|y|-2)^{-N}. 
\end{align*}
In particular, if we only integrate by parts once, it becomes
\begin{align*}
    |IV_1| &\lesssim R^{-1} (|x|+|y|-2)^{-1} \int_0^1 |\widetilde{\psi}_R'(\lambda)| d\lambda\\
    &\lesssim R^{-1} (|x|+|y|-2)^{-1} R \int_0^1  \mathbf{1}_{ \frac{1}{2R} \le  \lambda \le \frac{3}{4R}}(\lambda) d\lambda\\
    &\lesssim R^{-1} (|x|+|y|-2)^{-1}.
\end{align*}
Therefore, for $|x|+|y|\ge 4$, 
\begin{align*}
    |\mathcal{IV}_+^R(x,y)| \lesssim R^{-\delta} \frac{|x|^{-\frac{d_\ell-1}{2}}|y|^{-\frac{d_j-1}{2}}}{(|x|+|y|)^{\delta+1}} + \left(1+R^{1-N}\right) \frac{|x|^{-\frac{d_\ell-1}{2}}|y|^{-\frac{d_j-1}{2}}}{(|x|+|y|)^{N}}.
\end{align*}
Since $R>1$ and $N>\delta+1$, it is bounded by the kernel
\begin{equation*}
    K(x,y):=
|x|^{-\frac{d_\ell-1}{2}}|y|^{-\frac{d_j-1}{2}}(|x|+|y|)^{-1-\delta}.
\end{equation*}
While for the case $|x|+|y|<4$, we simply use upper bound
\begin{equation*}
    |\mathcal{IV}_+^R(x,y)| \lesssim R^{-1} (|x|+|y|-2)^{-1}.
\end{equation*}
Consequently, for $p$ in the range \eqref{BRrange},
\begin{align*} 
\|\mathcal{IV}_{+}^R f\|_{L^p([1,\infty),r^{d_\ell-1}dr)}^p
&\lesssim
\int_1^\infty
\left|
\int_1^\infty
x^{-\frac{d_\ell-1}{2}}y^{-\frac{d_j-1}{2}}
\mathbf 1_{\{x+y\le 4\}}
\frac{f(y)y^{d_j-1}}{(x-1)+(y-1)} dy
\right|^p
x^{d_\ell-1} dx
\\ \nonumber
&\quad+
\int_1^\infty
\left|
\int_1^\infty
K(x,y)\mathbf 1_{\{x+y\ge 4\}}f(y)y^{d_j-1}dy
\right|^p
x^{d_\ell-1}dx
\\ \nonumber
&\lesssim
\int_0^\infty
\left|
\int_0^\infty
\frac{|f(s+1)|\mathbf 1_{\{s\le 2\}}}{s+t} ds
\right|^p
dt
\\ \nonumber
&\quad+
\int_1^\infty
\left|
\int_1^\infty
x^{-\frac{d_\ell-1}{2}}y^{-\frac{d_j-1}{2}}(x+y)^{-\delta-1}
f(y)y^{d_j-1} dy
\right|^p
x^{d_\ell-1} dx
\\ \nonumber
&\lesssim
\int_0^\infty |f(s+1)|^p\mathbf 1_{\{s\le 2\}} ds
+
\|f\|_{L^p\left([1,\infty),r^{d_j-1}dr\right)}^p
\\ \nonumber
&\lesssim
\|f\|_{L^p\left([1,\infty),r^{d_j-1}dr\right)}^p,
\end{align*}
where the second inequality follows by changing variables $y \mapsto s+1$, $x\mapsto t+1$, and the second to last inequality is a consequence of the Hardy-Littlewood-Pólya inequality \cite[Theorem~319]{HLP} and Lemma~\ref{Nix}. 

Thus $\mathcal{IV}_{+}^R$ is bounded in $L^p$ uniformly in $R>1$ for $p$ in the desired range \eqref{BRrange}, completing the proof of Lemma~\ref{lem:high-largeR}
\end{proof}

\begin{lemma}\label{lem:low-largeR}
The kernel $\mathcal{III}_{+}^R(x,y)$ defines a bounded operator on $L^p$, uniformly in $R>1$, for all $1<p<\infty$.
\end{lemma}

\begin{proof}
Using the low-energy asymptotics of $A$, we obtain
\begin{equation*}
    \mathcal{III}_{+}^R(x,y)
\sim
|x|^{1-\frac{d_\ell}{2}}|y|^{1-\frac{d_j}{2}}
\int_0^R
\phi(\lambda)
\left(1-\frac{\lambda^2}{R^2}\right)^\delta
\lambda^{\frac{d_\ell+d_j}{2}-1}
K_{\nu_\ell}(i\lambda|x|)K_{\nu_j}(i\lambda|y|) d\lambda.
\end{equation*}
After the change of variables $\lambda\mapsto R\lambda$, we write
\begin{equation*}
\mathcal{III}_{+}^R(x,y)
\sim
R^{\frac{d_\ell+d_j}{2}}
|x|^{1-\frac{d_\ell}{2}}|y|^{1-\frac{d_j}{2}}
\int_0^1
\phi(R\lambda)(1-\lambda^2)^\delta
\lambda^{\frac{d_\ell+d_j}{2}-1}
K_{\nu_\ell}(i\lambda R|x|)K_{\nu_j}(i\lambda R|y|) d\lambda
\end{equation*}
and decompose
\begin{equation*}
\mathcal{III}_{+}^R(x,y)\sim III_0^R(x,y) + III_1^R(x,y)+III_2^R(x,y),
\end{equation*}
where
\begin{align*}
    &III_0^R(x,y):=
R^{\frac{d_\ell+d_j}{2}}
|x|^{1-\frac{d_\ell}{2}}|y|^{1-\frac{d_j}{2}}
\int_0^1
\phi(\lambda R|x|)\phi(R\lambda)(1-\lambda^2)^\delta
\lambda^{\frac{d_\ell+d_j}{2}-1}
K_{\nu_\ell}(i\lambda R|x|)K_{\nu_j}(i\lambda R|y|) d\lambda,\\
&III_1^R(x,y):=\\
&\quad R^{\frac{d_\ell+d_j}{2}}
|x|^{1-\frac{d_\ell}{2}}|y|^{1-\frac{d_j}{2}}
\int_0^1
(1-\phi(\lambda R|x|))\phi(\lambda R|y|)\phi(R\lambda)(1-\lambda^2)^\delta
\lambda^{\frac{d_\ell+d_j}{2}-1}
K_{\nu_\ell}(i\lambda R|x|)K_{\nu_j}(i\lambda R|y|) d\lambda,
\end{align*}
and
\begin{align*}
&III_2^R(x,y):=\\
&\quad R^{\frac{d_\ell+d_j}{2}}
|x|^{1-\frac{d_\ell}{2}}|y|^{1-\frac{d_j}{2}}
\int_0^1
(1-\phi(\lambda R|x|))(1-\phi(\lambda R|y|))\phi(R\lambda)(1-\lambda^2)^\delta
\lambda^{\frac{d_\ell+d_j}{2}-1}
K_{\nu_\ell}(i\lambda R|x|)K_{\nu_j}(i\lambda R|y|) d\lambda.
\end{align*}
As before,
\begin{equation*}
    |III_0^R(x,y)|\lesssim |x|^{-d_\ell}|y|^{2-d_j},
\end{equation*}
which is bounded on $L^p$ for all $1<p<\infty$.

Next, observe that
\begin{equation*}
III_1^R(x,y)
\sim
R^{\frac{d_\ell+1}{2}}
|x|^{\frac{1-d_\ell}{2}}|y|^{2-d_j}
\int_0^1
(1-\phi(\lambda R|x|))\phi(\lambda R|y|)\phi(R\lambda)(1-\lambda^2)^\delta
\lambda^{\frac{d_\ell-1}{2}}e^{-i\lambda R|x|} d\lambda.
\end{equation*}
Let
\begin{equation*}
\Psi_{x,y,R}(\lambda) =
(1-\phi(\lambda R|x|))\phi(\lambda R|y|)\phi(R\lambda)(1-\lambda^2)^\delta \lambda^{\frac{d_\ell-1}{2}}.
\end{equation*}
Integrating by parts $N=\left\lceil \frac{d_\ell+1}{2}\right\rceil$ times gives upper bound (note that $\Psi$ is supported in $\left[\frac{1}{2}(R|x|)^{-1}, \frac{3}{4}(R|y|)^{-1}\right]$, which is an interval contained in $[0,1]$ so $\Psi$ vanishes at endpoints $0$ and $1$) 
\begin{equation}\label{20}
|III_1^R(x,y)|\lesssim R^{\frac{d_\ell+1}{2}-N}
|x|^{\frac{1-d_\ell}{2}-N}
|y|^{2-d_j}
\int_0^1
\left|
\frac{d^N}{d\lambda^N}\Psi_{x,y,R}(\lambda)
\right| d\lambda.
\end{equation}
Note that the integrand above is bounded by $C\lambda^{\frac{d_\ell-1}{2}-N} (1-\lambda)^{\delta-N}$. In particular, $(1-\lambda)^{\delta-N}$ is bounded by $1$ if $\delta \ge N$ and bounded by $\left(1-\frac{3}{4}(R|y|)^{-1}\right)^{\delta-N}\le 4^{N-\delta}$ if $\delta<N$. Thus \eqref{20} is bounded by
\begin{align*}
R^{\frac{d_\ell+1}{2}-N}
|x|^{\frac{1-d_\ell}{2}-N}
|y|^{2-d_j}
\int_{(2R|x|)^{-1}}^1 \lambda^{\frac{d_\ell-1}{2}-N}
 d\lambda \lesssim |x|^{-d_\ell}|y|^{2-d_j}.
\end{align*}
Similarly,
\begin{align*}
III_2^R(x,y) &\sim
R^{\frac{d_\ell+d_j}{2}-1}
|x|^{\frac{1-d_\ell}{2}}|y|^{\frac{1-d_j}{2}}
\int_0^1
(1-\phi(\lambda R|x|))(1-\phi(\lambda R|y|))\phi(R\lambda)(1-\lambda^2)^\delta
\lambda^{\frac{d_\ell+d_j}{2}-2}
e^{-i\lambda R(|x|+|y|)} d\lambda\\
&=:R^{\frac{d_\ell+d_j}{2}-1}
|x|^{\frac{1-d_\ell}{2}}|y|^{\frac{1-d_j}{2}}
\int_0^1
\Phi_{x,y,R}(\lambda)e^{-i\lambda R(|x|+|y|)} d\lambda,
\end{align*}
where apparently $\Phi_{x,y,R}$ is smooth and compactly supported in $\left[\frac{1}{2} (R|x|)^{-1}, \frac{3}{4}R^{-1}\right] \subset [0,1]$.

Integrating by parts $N=\left\lceil \frac{d_\ell+d_j-2}{2}\right\rceil$ times yields 
\begin{equation*}
|III_2^R(x,y)| \lesssim R^{\frac{d_\ell+d_j-2}{2}-N}
|x|^{\frac{1-d_\ell}{2}-N}
|y|^{\frac{1-d_j}{2}}
\int_0^1
\left|
\frac{d^N}{d\lambda^N}\Phi_{x,y,R}(\lambda)
\right| d\lambda.
\end{equation*}
The same estimate as above gives
\begin{equation*}
  |III_2^R(x,y)|\lesssim |x|^{-d_\ell}|y|^{2-d_j}.  
\end{equation*}
Therefore,
\begin{equation*}
    |\mathcal{III}_{+}^R(x,y)|\lesssim |x|^{-d_\ell}|y|^{2-d_j},
\qquad |x|\ge |y|.
\end{equation*}
It follows by Lemma~\ref{Nix} that $\mathcal{III}_{+}^R$ is bounded on $L^p$ for all $1<p<\infty$, uniformly in $R>1$. 

This completes the proof of Lemma~\ref{lem:low-largeR}
\end{proof}

Combining Lemmas~\ref{lem:high-largeR} and \ref{lem:low-largeR} proves Proposition~\ref{prop:largeR}. 
\end{proof}

A parallel argument yields the following Corollary.

\begin{coro}\label{cor1}
For $\ell=1,2$
\begin{equation*}
    \sup_{R>0}\left\| \sigma_{\mathrm{on}}^{\ell,\pm}(R) \right\|_{p\to p}<\infty,
\end{equation*}
provided
\begin{equation*}
\delta>\max\left(d_\ell\left|\frac1p-\frac12\right|-\frac12,0\right).
\end{equation*}

\end{coro}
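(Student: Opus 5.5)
We follow the proofs of Propositions~\ref{prop:smallR} and~\ref{prop:largeR} verbatim, with the off-diagonal data $(A, k_\ell, k_j)$ replaced by the on-diagonal data $(B_\ell, k_\ell, k_\ell)$. Both $x,y$ now lie on the same end $\pm[1,\infty)$ with weight $r^{d_\ell-1}$, so Lemma~\ref{Nix} is applied with $n_1=n_2=d_\ell$. In that case the compatibility condition $p(\alpha+\beta-n_1)\ge n_2-n_1$ reduces to $\alpha+\beta\ge d_\ell$. This is the structural reason why no analogue of the asymmetric restriction \eqref{asym} appears. By symmetry we assume $|x|\ge|y|$ and treat only the $+$ sign.

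\textbf{Low energy.} For $0<R\le1$, and for the piece $\phi(\lambda)$ when $R>1$, we use $B_\ell(i\lambda)\sim\lambda^{2d_\ell-4}$. This exceeds the off-diagonal power $\lambda^{d_\ell+d_j-4}$ (with $d_j$ replaced by $d_\ell$) by nothing. After rescaling $\lambda\mapsto R\lambda$ we again split with $\phi(\lambda R|x|)$ and $\phi(\lambda R|y|)$. This gives:
\begin{itemize}
\item the non-oscillatory piece, bounded by $|x|^{-d_\ell}|y|^{2-d_\ell}$;
\item the mixed piece, handled as in Lemma~\ref{lem:I-smallR} via Lemma~\ref{lem:osc-expansion-order-M} near $\lambda=1$ and integration by parts elsewhere;
\item the fully oscillatory piece, handled as in Lemma~\ref{lem:II-smallR}.
\end{itemize}
The resulting kernel bounds are those of Lemmas~\ref{lem:I-smallR}--\ref{lem:II-smallR} with $d_j=d_\ell$, namely
\[
|x|^{-\delta-\frac{d_\ell+1}{2}}|y|^{\delta+2-\frac{3d_\ell-1}{2}},\qquad
|x|^{-\delta-\frac{d_\ell+1}{2}}|y|^{-\frac{d_\ell-1}{2}},\qquad
|x|^{-d_\ell}|y|^{2-d_\ell},
\]
together with their transposes. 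Each has total degree at least $d_\ell$, and equality holds for the second one. Lemma~\ref{Nix} then gives boundedness for
\[
\frac{d_\ell}{d_\ell\wedge(\frac{d_\ell+1}{2}+\delta)}<p<\frac{d_\ell}{0\vee(\frac{d_\ell-1}{2}-\delta)},
\]
which is exactly $\delta>\max(d_\ell|\frac1p-\frac12|-\frac12,0)$.

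\textbf{High energy.} For $R>1$ and the piece $(1-\phi(\lambda))$, the leading term of $B_\ell(i\lambda)$ is $\lambda^{d_\ell-2}$ with no oscillation. Combined with $k_\ell(i\lambda|x|)k_\ell(i\lambda|y|)\sim\lambda^{1-d_\ell}(|x||y|)^{\frac{1-d_\ell}{2}}e^{-i\lambda(|x|+|y|)}$, this produces the phase $e^{-i\lambda(|x|+|y|)}$ instead of $e^{-i\lambda(|x|+|y|-2)}$. This is harmless and even improves the near-junction analysis, since $|x|+|y|\ge2$. The remainder terms of $B_\ell$ carrying $e^{2i\lambda}$ are of lower order, $O(\lambda^{d_\ell-3})$, and produce the phase $|x|+|y|-2$. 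They are treated exactly as in Lemma~\ref{lem:high-largeR}, with one extra factor of $\lambda^{-1}$.

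Repeating the $\rho(1-\lambda)$ split and applying Lemma~\ref{lem:osc-expansion-order-M} yields the kernel bound
\[
(|x||y|)^{\frac{1-d_\ell}{2}}(|x|+|y|)^{-1-\delta}
\]
for $|x|+|y|\ge4$, and $R^{-1}(|x|+|y|-2)^{-1}$ otherwise. The first is homogeneous of degree $-(d_\ell+\delta)$ rather than $-d_\ell$. We therefore bound it by the kernel with $\delta$ replaced by $0$, or handle it via Lemma~\ref{Nix} with degree $\ge d_\ell$; for $\delta$ above the stated threshold, both routes give the Herz range. The near-junction piece is controlled by Hardy--Littlewood--P\'olya as before.

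\textbf{Main obstacle.} We expect the hardest step to be verifying that the non-oscillatory leading term of $B_\ell$ at high energy does not destroy the endpoint expansion. The concern is the uniformity of the amplitude derivatives $\|\psi^{(m)}\|_\infty$ in Lemma~\ref{lem:osc-expansion-order-M} when the phase is $R(|x|+|y|)$. We would check this by the same cutoff argument used for \eqref{eq:uniform-a-derivative}.
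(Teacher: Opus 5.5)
Your proposal is correct and follows essentially the paper's route: the paper proves Corollary~\ref{cor1} only by saying that the arguments of Propositions~\ref{prop:smallR} and~\ref{prop:largeR} carry over, with $(A,k_\ell,k_j)$ replaced by $(B_\ell,k_\ell,k_\ell)$. Your remark that Lemma~\ref{Nix} with $n_1=n_2=d_\ell$ only needs total degree $\ge d_\ell$ explains why \eqref{asym} disappears, and your use of the non-oscillatory leading term of $B_\ell$ (phase $|x|+|y|\ge 2$) is, if anything, more careful than the paper.

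Two small corrections. First, the kernel $|x|^{-\delta-\frac{d_\ell+1}{2}}|y|^{-\frac{d_\ell-1}{2}}$ has degree $d_\ell+\delta$, not $d_\ell$. Second, in the high-energy step, replacing $\delta$ by $0$ in $(|x||y|)^{\frac{1-d_\ell}{2}}(|x|+|y|)^{-1-\delta}$ only gives $\bigl|\frac1p-\frac12\bigr|<\frac{1}{2d_\ell}$. To reach the full range $\delta>\max\bigl(d_\ell\bigl|\frac1p-\frac12\bigr|-\frac12,0\bigr)$ you must use the Lemma~\ref{Nix} route, bounding the kernel by $|x|^{-\delta-\frac{d_\ell+1}{2}}|y|^{-\frac{d_\ell-1}{2}}$ for $|x|\ge|y|$.
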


\part{Counterexamples and sharpness: the Fefferman-type theory}

\section{Counterexample for the spectral projection}\label{SP}

  \begin{theorem}\label{fefferman_nondoubling}
Let $d_1,d_2>2$ and $d_1 \ne d_2$. Then
\begin{align*}
    \sup_{R>0} \left\| E_{\sqrt{\Lapd}}(R) \right\|_{p \to p} \le C \quad \textit{iff} \quad p=2.
\end{align*}

\end{theorem}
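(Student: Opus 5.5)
The case $p=2$ is trivial, since $E_{\sqrt{\Lapd}}(R)$ is an orthogonal projection. For $p\ne2$ we must show $\sup_R\|E_{\sqrt{\Lapd}}(R)\|_{p\to p}=\infty$. By duality ($E$ is self-adjoint, so boundedness on $L^p$ is equivalent to boundedness on $L^{p'}$) it suffices to treat one side of $2$. The plan is to isolate the off-diagonal part $E_{\mathrm{off}}(R)$. The $kl$ part and the on-diagonal $kk$ part act within a single end, so a test function supported on one end and an output measured on the other end see only $E_{\mathrm{off}}$. Concretely, with $D=d_1\vee d_2$ living on one end and $d=d_1\wedge d_2$ on the other, we will test
\[
\chi_{\text{end }D}\,E_{\sqrt{\Lapd}}(R)\,\chi_{\text{end }d}=\chi_{\text{end }D}\,E_{\mathrm{off}}(R)\,\chi_{\text{end }d},
\]
and its adjoint, with $f=\mathbf 1_{[\varepsilon,2\varepsilon]}$ on the $d$-end and output on $[2\varepsilon,3\varepsilon]$ on the $D$-end. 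This is the geometry described in the introduction.

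Next we extract the leading asymptotics of the off-diagonal kernel, as in the proof of Theorem~\ref{herz_doubling} but now keeping the main term instead of only bounding it. For $R\ge1$ and $|x|,|y|\gg1$, the high-energy piece $\mathcal I^4_{\mathrm{off}}$ dominates. Using $A(i\lambda)\sim\lambda^{\frac{d_1+d_2-4}{2}}e^{2i\lambda}$ and the large-argument forms of $k_1,k_2$, the integrand is $\lambda^{0}\,|x|^{\frac{1-D}{2}}|y|^{\frac{1-d}{2}}e^{-i\lambda(|x|+|y|-2)}$ up to constants. Integrating in $\lambda$ gives
\[
E_{\mathrm{off}}(R)(x,y)\approx c\,|x|^{\frac{1-D}{2}}|y|^{\frac{1-d}{2}}\,\frac{e^{-iR(|x|+|y|-2)}-e^{-i(|x|+|y|-2)}}{|x|+|y|-2}
\]
(together with the $-$ sign counterpart), plus error terms that, by the lower-order expansions in subsection~\ref{appendix}, are $O(\cdot/(|x|+|y|)^2)$ or are controlled as in Theorem~\ref{herz_doubling}. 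The low-energy pieces $\mathcal I^1$--$\mathcal I^3$ are bounded by the Nix-type kernels, which are not sharp but are smaller by powers of $\varepsilon$ in our window, or are themselves $L^p$-bounded for $p$ near $2$. To kill the $R$-independent term and make the oscillation coherent, we choose the modulated test function $f(y)=e^{iR|y|}\mathbf 1_{[\varepsilon,2\varepsilon]}(|y|)$ with $R\gg\varepsilon^{-1}$, or alternatively we take $R\to\infty$ with $\varepsilon$ fixed and average. The phase then cancels and the $y$-integral is non-oscillatory.

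We then compute norms. For $x$ on the $D$-end with $|x|\in[2\varepsilon,3\varepsilon]$, we get
\[
|E f(x)|\gtrsim \varepsilon^{\frac{1-D}{2}}\varepsilon^{\frac{1-d}{2}}\varepsilon^{-1}\int_\varepsilon^{2\varepsilon}y^{d-1}dy\simeq \varepsilon^{\frac{d-D}{2}}.
\]
Hence $\|Ef\|_{L^p}\gtrsim\varepsilon^{\frac{d-D}{2}+\frac Dp}$, while $\|f\|_{L^p}\simeq\varepsilon^{d/p}$. The ratio is $\varepsilon^{(D-d)(\frac1p-\frac12)}$, which is unbounded as $\varepsilon\to\infty$ when $p<2$. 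By duality, or by the adjoint configuration (input on the $D$-end, output on the $d$-end, giving ratio $\varepsilon^{(D-d)(\frac12-\frac1p)}$), it is also unbounded for $p>2$. Since $D>d$, this covers every $p\neq2$.

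The main obstacle is making the lower bound rigorous. We must justify that the modulated test function isolates the main term uniformly: the $e^{-i(|x|+|y|-2)}$ term from the lower endpoint $\lambda\sim1$, the companion term with $A(-i\lambda)$, and the remainders in the expansions of $A$ and $k_j$ must all be shown to be of strictly lower order on the window $|x|\sim|y|\sim\varepsilon$. The first thing to try is to take $R=\varepsilon^{-1}\cdot M$ with $M$ large fixed, integrate the unwanted phases by parts in $y$ (they oscillate at frequency $\gtrsim R$ relative to the test function), and bound the remainders by the crude kernels $(|x||y|)^{\frac{1-d_j}{2}}(|x|+|y|)^{-2}$.
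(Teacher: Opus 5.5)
Your test is the paper's counterexample. The paper also localises to one off-diagonal block, takes a modulated indicator of $[\varepsilon,2\varepsilon]$ on one end, measures the output on $[2\varepsilon,3\varepsilon]$ on the other, and obtains the ratio $\varepsilon^{(D-d)|\frac1p-\frac12|}$. It works with $p>2$, input on the larger end and $f_\varepsilon(y)=e^{\mp iy}\mathbf 1_{[\varepsilon,2\varepsilon]}(y)\,y^{-\frac{d_2-1}{2}}$, which is the adjoint of your configuration.

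Your reduction is leaner than the paper's. You localise to $\chi_{\text{end }D}E\chi_{\text{end }d}$ and bound $Ef$ from below pointwise on the window. The paper instead goes through Theorem~\ref{herz_kl}, Lemma~\ref{fefferman-on-diag}, the bounds for $I+II$ and Claim~1 via Lemma~\ref{Nix}, and the boundedness of the reverse block $S_4^{21}+S_5^{21}$. You need no $L^p$ bounds for error operators, and all $p\neq2$ are covered at once.

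However, the step you call the main obstacle is handled incorrectly, in three places.

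First, $\mathcal I^3_{\mathrm{off}}=\int_{|y|^{-1}}^1$ is neither smaller on the window nor $L^p$-bounded for the exponents $p<2$ you test. Integrating by parts, its boundary term at $\lambda=1$ is $c\,|x|^{\frac{1-D}{2}}|y|^{\frac{1-d}{2}}e^{\mp i(|x|+|y|)}/(|x|+|y|)$, exactly the size of your main term. This is in fact the paper's main term: the paper works at $R=1$, where your $\mathcal I^4_{\mathrm{off}}$ vanishes identically. For the true smooth integrand, the $\lambda=1$ boundary terms of $\mathcal I^3_{\mathrm{off}}$ and $\mathcal I^4_{\mathrm{off}}$ cancel, since they are artefacts of splitting at $\lambda=1$. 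The genuine main term comes from $\lambda=R$, with coefficient the exact value of $\lambda^{\frac{4-d_1-d_2}{2}}A(i\lambda)$ at $\lambda=R$.

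Second, against $e^{iR|y|}$ the $\lambda=1$ terms oscillate in $y$ with frequency $|R-1|$, not $\gtrsim R$. Integrating by parts in $y$ therefore gains only $(|R-1|\varepsilon)^{-1}$. Your condition $R\gg\varepsilon^{-1}$ controls only the conjugate term with phase $e^{+iR(|x|+|y|)}$.

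Third, the suggested choice $R=M\varepsilon^{-1}$ would break the argument. It leaves the regime $R\ge1$, and there $A(i\lambda)\sim\lambda^{d_1+d_2-4}$ multiplies the main term by $R^{\alpha}$ with $\alpha=\frac{d_1+d_2}{2}-2$. Your ratio then becomes $M^{\alpha}\varepsilon^{(D-d)|\frac1p-\frac12|-\alpha}\to0$, because $\alpha>\frac{D-d}{2}$ when $d>2$. Likewise $R\to\infty$ with $\varepsilon$ fixed gives no growth, since the blow-up comes only from $\varepsilon\to\infty$.

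The repair is to fix $R$ (the paper takes $R=1$; $R=2$ also works) and let $\varepsilon\to\infty$. Then the following are pointwise smaller than the main term on the window by a negative power of $\varepsilon$: $\mathcal I^1_{\mathrm{off}}$, $\mathcal I^2_{\mathrm{off}}$, the lower-endpoint part of $\mathcal I^3_{\mathrm{off}}$, and the $O\bigl((|x|+|y|)^{-2}\bigr)$ remainders. The terms with mismatched phase are killed by the modulation, and your bound $|Ef(x)|\gtrsim\varepsilon^{\frac{d-D}{2}}$ follows.
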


\begin{proof}
Thanks to Theorem~\ref{herz_kl}, we again only need to handle the $kk$ part of the kernel. To do so, we further decompose the $kk$ part into the diagonal part (up to complex constants)
\begin{align*}
    E_{\mathrm{on}}(R)(x,y) = \sum_{\ell=1}^2 \sum_{\pm} E_{\mathrm{on}}^{\ell,\pm}(R)(x,y),
\end{align*}
and the off-diagonal part
\begin{align*}
    E_{\mathrm{off}}(R)(x,y) = \sum_{\ell \ne j} \sum_{\pm} E_{\mathrm{off}}^{\ell j, \pm}(R)(x,y).
\end{align*}
The key point is the off-diagonal part. Indeed, we note that 
\begin{itemize}
    \item first, for $\ell=1,2$, $E_{\mathrm{on}}^{\ell,\pm}(R)$ acts as an operator from $L^p([1,\infty),|r|^{d_\ell-1} dr)$ to $L^p([1,\infty),|r|^{d_\ell-1} dr)$;
    \item second, the $kk$ term in its kernel takes the form $k_\ell(\pm i\lambda|x|)k_\ell(\pm i\lambda|y|)$;
    \item third, although the definition of the coefficient $B_1$ (resp. $B_2$) depends on $d_1$ and $d_2$, its asymptotic behaviour depends on $d_1$ (resp. $d_2$) only. 
\end{itemize}
The above three observations allow us to apply Theorem~\ref{herz_doubling} to the on-diagonal part $E_{\mathrm{on}}(R)$, yielding the following lemma.

\begin{lemma}\label{fefferman-on-diag}
Let \(D=d_1\vee d_2\). If
\(\left|1/p-1/2\right|<\frac1{2D}\), then
\[
\sup_{R>0}\|E_{\mathrm{on}}(R)\|_{L^p\to L^p}\leq C.
\]
\end{lemma}

Therefore, to show the unboundedness of $E_{\sqrt{\Lapd}}(R)$ on $L^p$ ($p\ne 2$), one only needs to prove
\begin{equation}\label{claim}
    \| E_{\mathrm{off}}(1)\|_{p \to p} = \infty,\quad \forall p\ne 2.
\end{equation}
In what follows,
we construct a counterexample establishing this assertion.

Note that the kernel of $E_{\mathrm{off}}(1)$ has the form (up to a constant)
\begin{align}\label{off-diag}
    \sum_{\ell \ne j} \left[ E_{\mathrm{off}}^{\ell j,+}(1)(x,y) - E_{\mathrm{off}}^{\ell j,-}(1)(x,y)  \right].
\end{align}
Suppose $|x|\ge |y|$, so that $0<|x|^{-1}\le |y|^{-1} \le 1$. We split the integration in $\lambda$ from \eqref{off-diag} into $\int_0^1 = \int_0^{|x|^{-1}} + \int_{|x|^{-1}}^{|y|^{-1}} + \int_{|y|^{-1}}^1$ (and $\int_0^1 = \int_0^{|y|^{-1}} + \int_{|y|^{-1}}^{|x|^{-1}} + \int_{|x|^{-1}}^1$ if $|x|\le |y|$). Define
\begin{align*}
    I(x,y) = \begin{cases}
        \int_0^{|x|^{-1}} \dots, & |x|\ge |y|,\\
        \int_0^{|y|^{-1}}\dots, & |x|\le |y|,
    \end{cases}\quad
    II(x,y) = \begin{cases}
        \int_{|x|^{-1}}^{|y|^{-1}} \dots, & |x|\ge |y|,\\
    \int_{|y|^{-1}}^{|x|^{-1}}\dots, & |x|\le |y|,
    \end{cases}\quad
    III(x,y) = \begin{cases}
        \int_{|y|^{-1}}^1 \dots, & |x|\ge |y|,\\
    \int_{|x|^{-1}}^1\dots, & |x|\le |y|.
    \end{cases}
\end{align*}
We then claim that $I+II$ acts as a bounded operator on $L^p$ for $p\in \left( \frac{2D}{D+1}, \frac{2D}{D-1} \right)$. The proof is similar to the argument from Theorem~\ref{herz_doubling}. By asymptotic formulas, one has (for $|x|\ge |y|$)
\begin{align*}
     \int_0^{|x|^{-1}} \lambda A(\pm i\lambda) k_\ell(\pm i\lambda|x|) k_j(\pm i\lambda|y|) d\lambda \sim \int_0^{|x|^{-1}} |x|^{2-d_\ell} |y|^{2-d_j} \lambda^{d_1+d_2+1-d_\ell-d_j} d\lambda \simeq |x|^{-d_\ell} |y|^{2-d_j}.
\end{align*}
Thus, $|I| = O(|x|^{-d_\ell} |y|^{2-d_j})$. As for $II$, 
\begin{align*}
    \int_{|x|^{-1}}^{|y|^{-1}} \lambda A(\pm i\lambda) k_\ell(\pm i\lambda|x|) k_j(\pm i\lambda|y|) d\lambda \sim |x|^{-d_\ell} |y|^{2-d_j} \int_1^{|x|/|y|} s^{\frac{d_\ell-1}{2}} e^{\mp i s} ds.
\end{align*}
Integration by parts yields
\begin{equation*}
    \left|\int_1^{|x|/|y|} s^{\frac{d_\ell-1}{2}} e^{\mp i s} ds\right|\lesssim \left( \frac{|x|}{|y|} \right)^{\frac{d_\ell-1}{2}}.
\end{equation*}
Hence, it follows by symmetry that
\begin{align*}
    |(I+II)(x,y)|\lesssim \begin{cases}
        |x|^{-\left( d_\ell+\frac{d_j-1}{2}-2 \right)} |y|^{-\frac{d_j+1}{2}}, & |y|\ge |x|,\\
        |x|^{-\frac{d_\ell+1}{2}} |y|^{-\left( d_j + \frac{d_\ell-1}{2} -2 \right)}, & |y|\le |x|.
    \end{cases}
\end{align*}
By Lemma~\ref{Nix}, $I+II$ is bounded from 
\begin{equation*}
    L^p([1,\infty),r^{d_j-1}dr) \to L^p([1,\infty),r^{d_\ell-1}dr)
\end{equation*}
for all
\begin{equation}
    \frac{d_\ell}{\min \left( d_\ell, \frac{d_\ell+1}{2} \right)} < p < \frac{d_j}{\max\left( 0, d_j - \frac{d_j+1}{2} \right)} \iff \frac{2d_\ell}{d_\ell+1} < p < \frac{2d_j}{d_j-1}.
\end{equation}
Since the stated $D$-dependent range is contained in this interval for each $\ell\neq j$, the claim follows.

As a consequence, it suffices to establish the $L^p$-unboundedness of $III$. Note that for $|x|\ge |y|$,
\begin{align*}
    \int_{|y|^{-1}}^1  \lambda A(\pm i\lambda) k_\ell(\pm i\lambda|x|) k_j(\pm i\lambda|y|) d\lambda \sim |x|^{-\frac{d_\ell-1}{2}} |y|^{-\frac{d_j-1}{2}}\int_{|y|^{-1}}^1 \lambda^{\alpha} e^{\mp i t\lambda} d\lambda := |x|^{-\frac{d_\ell-1}{2}} |y|^{-\frac{d_j-1}{2}} \mathcal{I}_\alpha(x,y),
\end{align*}
where $\alpha = \frac{d_\ell+d_j}{2}-2>0$ and $t = |x|+|y|$. Integration by parts twice gives
\begin{align}\nonumber
    \mathcal{I}_\alpha(x,y) &= \frac{e^{\mp it}}{it} \pm \frac{|y|^{-\alpha} e^{\mp it/|y|}}{it} + \frac{\alpha e^{\mp it}}{t^2} - \frac{\alpha e^{\mp it/|y|} |y|^{1-\alpha}}{t^2}    - \frac{\alpha (\alpha-1)}{t^2} \int_{|y|^{-1}}^1 e^{\mp it \lambda}  \lambda^{\alpha-2} d\lambda \\ \label{I1}
    &= \frac{e^{\mp i(|x|+|y|)}}{i(|x|+|y|)} + O\left(\frac{1}{|y|^{\alpha}(|x|+|y|)}\right) + O\left(\frac{1}{(|x|+|y|)^2}\right) + O\left(\frac{1}{|y|^{\alpha-1}(|x|+|y|)^2}\right).
\end{align}
Symmetrically, for $|x|\le |y|$,
\begin{align}\label{I2}
    \mathcal{I}_\alpha(x,y)= \frac{e^{\mp i(|x|+|y|)}}{i(|x|+|y|)} + O\left(\frac{1}{|x|^{\alpha}(|x|+|y|)}\right) + O\left(\frac{1}{(|x|+|y|)^2}\right) + O\left(\frac{1}{|x|^{\alpha-1}(|x|+|y|)^2}\right).
\end{align}

\textit{Claim~1. The contributions of the last three terms from \eqref{I1} and \eqref{I2} in $III$ define bounded operators
$$L^p([1,\infty),r^{d_j-1}dr) \to L^p([1,\infty),r^{d_\ell-1}dr) $$
for $\frac{2D}{D+1}<p<\frac{2D}{D-1}$.}

To prove \textit{Claim 1}, one only needs to show the $L^p$-boundedness of $S_1, S_2, S_3$, where
\begin{align*}
    &S_1(x,y) = \begin{cases}
        |x|^{-\frac{d_\ell-1}{2}-\alpha}|y|^{-\frac{d_j-1}{2}}(|x|+|y|)^{-1}, & |y|\ge |x|,\\
        |x|^{-\frac{d_\ell-1}{2}} |y|^{-\frac{d_j-1}{2}-\alpha} (|x|+|y|)^{-1}, & |y|\le |x|,
    \end{cases}  \\
    &S_2(x,y) = |x|^{-\frac{d_\ell-1}{2}}|y|^{-\frac{d_j-1}{2}} (|x|+|y|)^{-2},\\
    &S_3(x,y) = \begin{cases}
        |x|^{-\frac{d_\ell-1}{2}+1-\alpha}  |y|^{-\frac{d_j-1}{2} }(|x|+|y|)^{-2}, & |y|\ge |x|,\\
         |x|^{-\frac{d_\ell-1}{2}} |y|^{-\frac{d_j-1}{2} +1-\alpha }(|x|+|y|)^{-2}, & |y|\le |x|.
    \end{cases}
\end{align*}

\begin{proof}[Proof of Claim~1]
One checks easily that 
\begin{align*}
&|S_1(x,y)| \lesssim \begin{cases}
        |x|^{-\frac{d_\ell-1}{2}-\alpha} |y|^{-\frac{d_j+1}{2}}, & |y| \ge |x|,\\
        |x|^{-\frac{d_\ell+1}{2}} |y|^{-\frac{d_j-1}{2}-\alpha}, & |y|\le |x|,
    \end{cases} \quad     |S_2(x,y)| \lesssim \begin{cases}
        |x|^{-\frac{d_\ell-1}{2}} |y|^{-\frac{d_j+3}{2}}, & |y| \ge |x|,\\
        |x|^{-\frac{d_\ell+3}{2}} |y|^{-\frac{d_j-1}{2}}, & |y|\le |x|,
    \end{cases}\\
    &|S_3(x,y)| \lesssim \begin{cases}
        |x|^{-\frac{d_\ell-1}{2}+1-\alpha} |y|^{-\frac{d_j+3}{2}}, & |y| \ge |x|,\\
        |x|^{-\frac{d_\ell+3}{2}} |y|^{-\frac{d_j-1}{2}+1-\alpha}, & |y|\le |x|.
    \end{cases}
\end{align*}
Note that the  operators $S_1,S_2,S_3$ are all of homogeneous type. Moreover, for all $p\ge 1$, we have
\begin{align*}
    p \left(\frac{d_\ell-1}{2} + \alpha + \frac{d_j+1}{2} - d_j \right) = p (d_\ell-2) \ge d_\ell-d_j.
\end{align*}
Therefore, by Lemma~\ref{Nix}, $S_1$ is bounded in $L^p$ for all
\begin{align}\label{S1}
    \frac{2d_\ell}{2d_\ell \wedge (d_\ell+1)} < p < \frac{2d_j}{0 \vee (d_j-1)} \iff \frac{2d_\ell}{d_\ell+1} < p < \frac{2d_j}{d_j-1}.
\end{align}
Next, to apply Lemma~\ref{Nix} to $S_2$, one needs
\begin{equation*}
    p\left( \frac{d_\ell-1}{2} + \frac{d_j+3}{2} - d_j \right) \ge d_\ell-d_j,
\end{equation*}
which is satisfied if
\begin{align*}
    \begin{cases}
        1\le p \le \infty, & d_\ell-d_j=-2, \\
        p\ge \frac{2(d_\ell-d_j)}{d_\ell-d_j+2}, & d_\ell-d_j>-2,\\
        p \le \frac{2(d_j-d_\ell)}{d_j-d_\ell-2}, & d_\ell-d_j<-2.
    \end{cases}
\end{align*}
A straightforward calculation shows that the above assumption holds if $\frac{2d_\ell}{d_\ell+1}<p<\frac{2d_j}{d_j-1}$. So, Lemma~\ref{Nix} guarantees that $S_2$ is $L^p$-bounded for
\begin{equation}\label{S2}
    \frac{2d_\ell}{2d_\ell \wedge (d_\ell+3)} <p< \frac{2d_j}{0 \vee (d_j-3)}.
\end{equation}
Finally, since for all $p\ge 1$, we have
\begin{equation*}
    p \left( \frac{d_\ell-1}{2} - 1 + \frac{d_\ell+d_j}{2} - 2 + \frac{d_j+3}{2} - d_j \right) \ge d_\ell-d_j \iff p \ge \frac{d_\ell-d_j}{d_\ell-2}.
\end{equation*}
Thus, by Lemma~\ref{Nix} again, $S_3$ is bounded in $L^p$ provided 
\begin{equation}\label{S3}
    \frac{2d_\ell}{2d_\ell \wedge (d_\ell+3)} < p < \frac{2d_j}{0 \vee (d_j-3)}.
\end{equation}
Note that $\left( \frac{2D}{D+1}, \frac{2D}{D-1} \right)$ is contained in each of the intervals in \eqref{S1}, \eqref{S2} and \eqref{S3}. The proof of Claim~1 is then complete.
\end{proof}

It remains to show that an operator with kernel $\frac{e^{\mp i(|x|+|y|)}}{|x|+|y|}$ can be bounded on $L^p$ only when $p=2$. That is, we have to establish the $L^p$-unboundedness of the linear combination of the following two operators:
\begin{equation*}
S_4f(x) := \sum_{\ell \ne j}|x|^{-\frac{d_\ell-1}{2}} \int_1^\infty |y|^{-\frac{d_j-1}{2}} \frac{e^{i(|x|+|y|)}f(y)}{|x|+|y|} |y|^{d_j-1} dy:= \sum_{\ell \ne j}S_4^{\ell j}f(x), 
\end{equation*}
and
\begin{equation*}
S_5f(x):= \sum_{\ell \ne j} |x|^{-\frac{d_\ell-1}{2}} \int_1^\infty |y|^{-\frac{d_j-1}{2}} \frac{e^{-i(|x|+|y|)}f(y)}{|x|+|y|} |y|^{d_j-1} dy:= \sum_{\ell \ne j}S_5^{\ell j}f(x)
\end{equation*}
for $p \ne 2$.

Without loss of generality, assume that \(d_1<d_2\).
Since the spectral projection is self-adjoint, it  suffices to
consider $p>2$.  A direct estimate shows  that the kernel of $S_4^{21}+S_{5}^{21}$
satisfies 
\begin{align*}
    |(S_4^{21}+S_{5}^{21})(x,y)| \lesssim \begin{cases}
        |x|^{-\frac{d_2-1}{2}} |y|^{-\frac{d_1+1}{2}}, & |y|\ge |x|,\\
        |x|^{-\frac{d_2+1}{2}} |y|^{-\frac{d_1-1}{2}}, & |y|\le |x|.
    \end{cases}
\end{align*}
Since 
\begin{equation*}
    p\left( \frac{d_2-1+d_1+1}{2} - d_1 \right) \ge d_2-d_1 \implies p\ge 2,
\end{equation*}
Lemma~\ref{Nix} tells us $S_4^{21}+S_{5}^{21}$ is bounded from $L^p([1,\infty), r^{d_1-1}dr) \to L^p([1,\infty), r^{d_2-1}dr)$ for 
\begin{equation*}
    2\le  p < \frac{2D}{D-1}.
\end{equation*}
Hence, the result of Theorem~\ref{fefferman_nondoubling} follows by showing the unboundedness of the linear combination of $S_4^{12}$ and $S_{5}^{12}$, say $c_1 S_4^{12} + c_2 S_5^{12}$ with $c_1,c_2 \in \mathbb C$ not simultaneously zero, from $L^p([1,\infty), r^{d_2-1}dr) \to L^p([1,\infty), r^{d_1-1}dr)$ for $p>2$ and $d_1<d_2$. 

Assume $c_1\ne 0$. Consider the function 
\begin{equation}\label{couterF}
    f_\varepsilon(y) = e^{-iy} \mathbf{1}_{[\varepsilon,2\varepsilon]}(y) y^{-\frac{d_2-1}{2}}, \quad \varepsilon\ge 1,
\end{equation}
and if $c_1=0$ (so $c_2\ne 0$), we put $f_\varepsilon(y) = e^{iy} \mathbf{1}_{[\varepsilon,2\varepsilon]}(y) y^{-\frac{d_2-1}{2}}$.

First,
\begin{align*}
    \| f_\varepsilon \|_p^p \lesssim \int_\varepsilon^{2\varepsilon} y^{-p \frac{d_2-1}{2}} y^{d_2-1} dy \lesssim \varepsilon^{d_2 - p \frac{d_2-1}{2}}.
\end{align*}
Second, for $x\in [2\varepsilon,3\varepsilon]$,
\begin{align*}
    |S_4^{12}f_{\varepsilon}(x)| = x^{-\frac{d_1-1}{2}} \int_\varepsilon^{2\varepsilon} \frac{dy}{x+y} \gtrsim \varepsilon^{-\frac{d_1-1}{2}}.
\end{align*}
Third, for $x\in [2\varepsilon,3\varepsilon]$, one also has
\begin{align*}
    |S_5^{12} f_\varepsilon(x)| &= x^{-\frac{d_1-1}{2}} \left| \int_\varepsilon^{2\varepsilon} \frac{e^{-2iy}dy}{x+y} \right| \\
    &= x^{-\frac{d_1-1}{2}} \left| -\frac{1}{2i} \int_\varepsilon^{2\varepsilon} \frac{d (e^{-2iy}) }{x+y} \right|\\
    &= \frac{1}{2} x^{-\frac{d_1-1}{2}} \left| \left[\frac{e^{-2iy}}{x+y}\right]_{y=\varepsilon}^{2\varepsilon} +   \int_\varepsilon^{2\varepsilon} \frac{e^{-2iy}}{(x+y)^2} dy  \right|\\
    &\lesssim \varepsilon^{-\frac{d_1+1}{2}}.
\end{align*} 
Hence, for sufficiently large $\varepsilon$ and  for $x\in [2\varepsilon,3\varepsilon]$ we have 
\begin{equation*}
    |c_1 S_4^{12}f_\varepsilon(x) + c_2 S_5^{12}f_\varepsilon(x)| \gtrsim |S_4^{12}f_\varepsilon(x)| \gtrsim \varepsilon^{-\frac{d_1-1}{2}}.
\end{equation*}
Thus,
\begin{align*}
 \| c_1 S_4^{12} f_\varepsilon + c_2S_5^{12} f_\varepsilon\|_{L^p([2\varepsilon,3\varepsilon], x^{d_1-1} dx)}^p \gtrsim \| c_1 S_4^{12} f_\varepsilon + c_2 S_5^{12} f_\varepsilon \|_{L^p([2\varepsilon,3\varepsilon], x^{d_1-1}dx)}^p \\ \gtrsim \int_{2\varepsilon}^{3\varepsilon} \varepsilon^{-p \frac{d_1-1}{2}} x^{d_1-1} dx \gtrsim \varepsilon^{d_1-p\frac{d_1-1}{2}}.
\end{align*}
Therefore, if $S_4+S_5$ is bounded in $L^p$ for some $p>2$, then we have for $\varepsilon  \gg 1$,
\begin{equation*}
    \varepsilon^{d_1-p\frac{d_1-1}{2}} \lesssim \varepsilon^{d_2 - p \frac{d_2-1}{2}} \iff \varepsilon^{(d_1-d_2)(1-p/2)} \lesssim 1,
\end{equation*}
which is not true as $\varepsilon\to \infty$. The proof of Theorem~\ref{fefferman_nondoubling} is now complete.
\end{proof}

\begin{remark}\label{r53}
The mechanism behind Theorem~\ref{fefferman_nondoubling} is closely
related, at least at the level of ideas, to Fefferman's counterexample
for the ball multiplier \cite{Fefferman}. In Fefferman's argument,
Kakeya geometry permits the construction of wave packets pointing in
many directions whose spatial supports exhibit highly non-uniform
overlap, leading to an incompatibility with \(L^p\)-boundedness when
\(p\ne2\). Although the present one-dimensional model has no directional
geometry, the mismatch between the volume-growth dimensions \(d_1\)
and \(d_2\) produces an analogous obstruction. Indeed, the test function
is placed on the larger end, whose measure on an interval of length
comparable to \(\varepsilon\) scales like \(\varepsilon^{d_2}\), whereas
the leading output is measured on the smaller end and has the scaling
dictated by \(d_1\). The contradiction arises from the incompatibility
between the \(L^p\)-scaling of the input on the \(d_2\)-dimensional end
and that of the leading output on the \(d_1\)-dimensional end. In this
sense, the dimensional asymmetry of the two ends reproduces, in a
one-dimensional setting, the testing mechanism behind Fefferman's
Kakeya-set argument.
\end{remark}

\begin{remark}\label{r54}
The counterexample used above will reappear, with minor variations, in
the negative results below. Its role is to isolate the principal
off-diagonal oscillatory interaction between the two ends. Analytically,
this interaction arises from the region of the off-diagonal \(kk\)
kernel in which both \(\lambda|x|\) and \(\lambda|y|\) are large, and
produces a model kernel of the form
\begin{equation*}
    \frac{|x|^{-\frac{d_\ell-1}{2}}
    |y|^{-\frac{d_j-1}{2}}}{|x|+|y|}
    e^{\mp i(|x|+|y|)}.
\end{equation*}
Geometrically, the test function is placed on one end and the output is
measured on the other; the off-diagonal interaction therefore couples
data on the two ends through their junction. In the asymmetric case,
this produces the Fefferman-type obstruction arising from the mismatch
between \(d_1\) and \(d_2\), while in the symmetric case it reflects the
obstruction at the boundary of the Herz range. Thus the variants of this
test function used below are dictated by the basic geometric and
oscillatory structure of the model.
\end{remark}

We now complete the proof of Theorem~\ref{main_spectral_projection} for the
symmetric case \(d_1=d_2=d\). The positive result was established in
Theorem~\ref{herz_doubling}. However, those estimates prove only sufficiency
and do not yield the sharp necessity statement outside Herz's range.

To establish necessity, we isolate the off-diagonal \(kk\) contribution and
treat it independently of the on-diagonal \(kl\) and \(kk\) terms. This
reduces the problem to the same oscillatory mechanism that appeared in
Section~\ref{sec4}. In what follows, we retain the cut-off functions and the
small- and large-frequency decomposition introduced there.

\begin{lemma}\label{lem:negative:spectral}
Let \(d>2\) and \(1\le p\le\infty\). The spectral projection
\(E_{\sqrt{\Delta_d}}([0,1))\) is unbounded on \(L^p(\widetilde{\R})\) whenever
\[
    \left|\frac{1}{p}-\frac{1}{2}\right|
    \ge \frac{1}{2d}.
\]
\end{lemma}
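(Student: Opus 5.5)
Since the projection is self-adjoint, duality lets us treat only $p>2$, i.e.\ $p\ge \frac{2d}{d-1}$. The plan is to test $E_{\sqrt{\Delta_d}}([0,1))$ on a function living on one end, namely $f=\mathbf 1_{[1,2]}$ on $[1,\infty)$, and to examine the output on the other end, $x\le -1$, for $|x|$ large. For $|x|\to\infty$ we want the lower bound
\[
|E_{\sqrt{\Delta_d}}([0,1))f(x)|\gtrsim |x|^{-\frac{d+1}{2}}
\]
on a set of $x$ of positive density. Once we have it, $\int_{A}^\infty |x|^{-p\frac{d+1}{2}}|x|^{d-1}\,dx=\infty$ precisely when $p\frac{d+1}{2}\le d$, i.e.\ $p\le\frac{2d}{d+1}$, which is the dual endpoint. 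Because of the duality reduction, this settles both endpoints.

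The first simplification is that, for $x\le -1$ and $y\ge 1$, the $kl$ part and the on-diagonal $kk$ part vanish: the $kl$ operator decomposes as in \eqref{eq:Lapd-decomposition}, and $E_{\mathrm{on}}$ also acts end by end. So on the other end only $E_{\mathrm{off}}(1)$ contributes. We then reuse the splitting from the proof of Theorem~\ref{fefferman_nondoubling} with $|x|\ge|y|$, writing the kernel as $I+II+III$. For $y\in[1,2]$ the pieces $I$ and $II$ are $O(|x|^{-d}|y|^{2-d}\cdot |x|^{\frac{d-1}{2}})=O(|x|^{-\frac{d+1}{2}})$. This is the same size as the main term, so we must not discard them. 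The better route is not to split in $\lambda$ at all: since $|y|\le 2$, we keep $k_d(\pm i\lambda|y|)$ as a smooth function of $\lambda$ and integrate in $y$ first.

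Doing so, the output for $x\le-1$ is $\int_0^1 \lambda A(\pm i\lambda)\,k_d(\pm i\lambda|x|)\,G(\pm\lambda)\,d\lambda$, where $G(\lambda)=\int_1^2 k_d(i\lambda y)y^{d-1}dy$. The amplitude $a(\lambda)=\lambda A(i\lambda)G(\lambda)$ behaves like $\lambda^{2d-3}\cdot\lambda^{2-d}=\lambda^{d-1}$ near $0$, and it is smooth on $(0,1]$ with $a(1)\ne0$ generically. Inserting the large-argument asymptotics $k_d(i\lambda|x|)\approx(\lambda|x|)^{\frac{1-d}{2}}e^{-i\lambda|x|}$ for $\lambda|x|\ge1$ produces $|x|^{\frac{1-d}{2}}\int_{|x|^{-1}}^1 \lambda^{\frac{d-1}{2}}\tilde a(\lambda)e^{-i\lambda|x|}d\lambda$. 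We evaluate this by integrating by parts, or via Lemma~\ref{lem:osc-expansion-order-M} at $\delta=0$. The endpoint $\lambda=1$ gives the leading term $c\,a(1)e^{-i|x|}|x|^{-1}$. The endpoint $\lambda=|x|^{-1}$, together with the region $\lambda|x|\le 1$, contributes $O(|x|^{\frac{1-d}{2}}\cdot|x|^{-d})$, which is much smaller. The result is a kernel output of the form $c\,|x|^{-\frac{d+1}{2}}\bigl(e^{-i|x|}C_+ - e^{i|x|}C_-\bigr)+o(|x|^{-\frac{d+1}{2}})$, with the $\pm$ contributions combined.

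The main obstacle is the nonvanishing of the leading coefficients, i.e.\ showing that the combination $C_+e^{-i|x|}-C_-e^{i|x|}$ is not identically zero. Here $C_\pm$ involves $A(\pm i)$ and $G(\pm1)=\int_1^2 k_d(\pm iy)y^{d-1}dy$. If either coefficient is nonzero, then $|C_+e^{-i|x|}-C_-e^{i|x|}|\ge \bigl||C_+|-|C_-|\bigr|$ or is large on a positive-density set of $|x|$, which is all we need. If instead the coefficients conspire to vanish, we would fall back on replacing the sharp projection at $R=1$ by $R$ slightly different, which is legitimate by rescaling. Alternatively, we would replace $f$ by $e^{\pm iy}\mathbf 1_{[\varepsilon,2\varepsilon]}y^{-\frac{d-1}{2}}$ as in \eqref{couterF}, which picks out one phase, and compare $L^p$ scalings as in Theorem~\ref{fefferman_nondoubling} with $d_1=d_2$. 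That comparison yields exactly the endpoint $|\frac1p-\frac12|\ge\frac1{2d}$.
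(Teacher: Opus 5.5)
Your route differs from the paper's and can be completed. As written, though, it leaves open exactly the step you call the main obstacle: you never show that the leading coefficient $a(1)=A(i)G(1)$ is non-zero, and neither fallback works.

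\begin{itemize}
\item Changing $R$ is not ``legitimate by rescaling''. The junction at $|r|=1$ fixes a scale, so $\Delta_d$ on $\widetilde{\R}$ has no dilation symmetry, and the lemma concerns $R=1$.
\item The test function $e^{\pm iy}\mathbf 1_{[\varepsilon,2\varepsilon]}(y)y^{-\frac{d-1}{2}}$ gives nothing when $d_1=d_2$. Input and output $L^p$ norms then scale identically, and the comparison $\varepsilon^{(d_1-d_2)(1-p/2)}\lesssim 1$ is vacuous. This is why the paper needs a different test function in the symmetric case.
\end{itemize}

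The gap is easy to close directly.
\begin{itemize}
\item $A(i)=-1/(i[k_d^2]'(i))$ is the reciprocal of a finite number, so it cannot vanish.
\item $k_d(iy)=c\,y^{-\nu}H^{(2)}_\nu(y)$ with $\nu=\frac d2-1$ and a constant $c\neq 0$. Hence $\mathrm{Re}\,(G(1)/c)=\int_1^2 y^{d/2}J_\nu(y)\,dy>0$, because $J_\nu>0$ on $(0,j_{\nu,1})\supset[1,2]$. Alternatively, take $f=\mathbf 1_{[1,1+\eta]}$ with $\eta$ small and use $k_d(i)\neq 0$.
\item The kernel is real, so the $-$ term is the conjugate of the $+$ term. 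The leading part of $Ef(x)$ is therefore $c'|x|^{-\frac{d+1}{2}}\cos(|x|-\theta)$ with $c'\neq 0$, and your positive-density argument finishes.
\end{itemize}

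The asymptotic step also needs more care than stated. After a single integration by parts over $[|x|^{-1},1]$, the leftover integral is still of order $|x|^{-\frac{d+1}{2}}$. The same is true of the $O((\lambda|x|)^{-\frac{d+1}{2}})$ remainder of $k_d$ if you bound it in absolute value. Use smooth cutoffs $\phi(\lambda|x|)$ and $\rho(1-\lambda)$ as in Section~\ref{sec4}. Apply Lemma~\ref{lem:osc-expansion-order-M} with $\delta=0$ near $\lambda=1$, integrate by parts many times elsewhere, and use the symbol bounds for the remainder. The errors are then $O(|x|^{-d}+|x|^{-\frac{d+3}{2}})$, which is $o(|x|^{-\frac{d+1}{2}})$ since $d>1$. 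Note that the low-frequency region gives $O(|x|^{-d})$, not $O(|x|^{\frac{1-d}{2}-d})$ as you wrote.

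Compared with the paper: the paper works on the side $p\ge\frac{2d}{d-1}$. It reduces the off-diagonal kernel to the model $\frac{(|x||y|)^{-\frac{d-1}{2}}}{|x|+|y|}e^{\mp i(|x|+|y|)}$, modulo error kernels that Lemma~\ref{Nix} shows are bounded (only for $p<\frac{2d}{d-3}$ when $d>3$). It then tests with $e^{iy}y^{-d/p}\mathbf 1_{(2,\varepsilon)}$ and reads the output near the junction.

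Your argument is the dual picture. A fixed bump next to the junction produces an output on the other end decaying like $|x|^{-\frac{d+1}{2}}$, which just fails to lie in $L^p(|x|^{d-1}dx)$ for $p\le\frac{2d}{d+1}$. It needs only pointwise asymptotics of one function and no $L^p$ bounds on error kernels. It covers all $1\le p\le\frac{2d}{d+1}$ at once, endpoint included, and the other side follows by self-adjointness; this is the reverse of what your opening sentence says. The price is that the leading coefficient depends on the test function and must be checked.
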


\begin{proof}
If \(E_{\sqrt{\Delta_d}}([0,1))\) were bounded on \(L^p\), then so would be
its off-diagonal localisation
$
T:=M_{\chi_{[1,\infty)}}E_{\sqrt{\Delta_d}}([0,1))
M_{\chi_{(-\infty,-1]}}.
$
It therefore suffices to prove that \(T\) is unbounded.
Up to harmless constants, the kernel of $T$ is a linear combination of 
\begin{equation*}
    \int_0^1 \lambda A(\pm i \lambda) k_d(\pm i \lambda |x|) k_d (\pm i \lambda |y|) d\lambda.
\end{equation*}
Recall the functions \(\phi,\rho\in C_c^\infty(\mathbb R)\) used in
Section~\ref{sec4}, chosen so that
%Recall the cut-off functions used in Section~\ref{sec4}:
\begin{align*}
    \phi(t) = \begin{cases}
        1, & 0\le t\le \frac{1}{2},\\
        0, & t\le -\frac{1}{2}\quad \textrm{or} \quad t\ge \frac{3}{4}.
    \end{cases} \qquad \rho(t) = \begin{cases}
        1, & 0\le t \le \frac{1}{8},\\
        0, & t\le -\frac{1}{2}\quad \textrm{or} \quad t\ge \frac{1}{4}.
    \end{cases}
\end{align*}
Suppose $|x|\ge |y|$. Following the idea of Proposition~\ref{prop:smallR}, we split the above kernel into $\mathcal{I}_{L,\pm}+\mathcal{I}_{H,\pm}$, where 
\begin{equation*}
\mathcal{I}_{L,\pm}(x,y) := \int_0^1 \phi(\lambda |x|) \lambda A(\pm i \lambda) k_d(\pm i \lambda |x|) k_d (\pm i \lambda |y|) d\lambda.
\end{equation*}
%A routine small-argument asymptotic estimate gives 
The small-argument asymptotics give
$|\mathcal{I}_{L,\pm}(x,y)|\lesssim |x|^{-d} |y|^{2-d}$ for $|x|\ge |y|$. We further decompose $\mathcal{I}_{H,\pm}$ into $\mathcal{I}_{\pm}+\mathcal{II}_{\pm}$, where
\begin{align*}
    \mathcal{I}_{\pm}(x,y):&= \int_0^1 \left(1-\phi(\lambda |x|) \right) \phi(\lambda |y|) \lambda A(\pm i \lambda) k_d(\pm i \lambda |x|) k_d (\pm i \lambda  |y|) d\lambda\\
    & \sim |x|^{\frac{1-d}{2}} |y|^{2-d} \int_0^1 \left(1-\phi(\lambda |x|) \right) \phi(\lambda |y|) \lambda^{\frac{d-1}{2}} e^{\pm i \lambda |x|}\, d\lambda. 
\end{align*}
Observe that the amplitude of the above integral is smooth and compactly supported in $[ \frac{|x|^{-1}}{2}, \frac{3|y|^{-1}}{4} ] \subset (0,1)$. Integrating by parts $N=\left\lceil \frac{d+1}{2} \right\rceil$ times, we obtain
\begin{align*}
    |\mathcal{I}_{\pm}(x,y)| \lesssim |x|^{-\frac{d-1}{2}-N} |y|^{2-d} \int_{2^{-1}|x|^{-1}}^1 \lambda^{\frac{d-1}{2}-N} d\lambda \lesssim |x|^{-d} |y|^{2-d}.
\end{align*}
We now estimate $\mathcal{II}_{\pm}$. After inserting the function $\rho(1-\lambda)$, the proof of Lemma~\ref{lem:II-smallR} shows that the integral 
\begin{equation*}
\int_0^1 (1-\rho(1-\lambda)) (1-\phi(\lambda |x|) ) (1-\phi(\lambda |y|) ) \lambda A(\pm i \lambda) k_d(\pm i \lambda  |x|) k_d (\pm i \lambda  |y|) d\lambda
\end{equation*}
is at most $O\left( |x|^{-N-\frac{d-1}{2}} |y|^{N+1-d-\frac{d-1}{2}} \right) = O(|x|^{-d} |y|^{2-d})$. It remains to treat
\begin{align*}
    II_{0,\pm} :&= \int_0^1 \rho(1-\lambda) (1-\phi(\lambda |x|)) (1-\phi(\lambda |y|)) \lambda A(\pm i \lambda) k_d(\pm i \lambda  |x|) k_d (\pm i \lambda |y|) d\lambda\\
    &\sim |x|^{-\frac{d-1}{2}} |y|^{-\frac{d-1}{2}} \int_0^1 \rho(1-\lambda)
(1-\phi(\lambda |x|))(1-\phi(\lambda |y|))
\lambda^{d-2}
e^{\mp i\lambda (|x|+|y|)} d\lambda.
\end{align*}
Making the change of variables $s=1-\lambda$, 
\begin{align*}
    II_{0,\pm} =|x|^{-\frac{d-1}{2}} |y|^{-\frac{d-1}{2}}  e^{\mp i (|x|+|y|)} \int_0^\infty \rho(s) (1-s)^{d-2} e^{\pm is(|x|+|y|)} ds.
\end{align*}
Here we  used the fact that, on \(\operatorname{supp}\rho\), one has \(s\le1/4\), and hence
\(\lambda=1-s\ge3/4\). Since \(|x|,|y|\ge1\), it follows that
$(1-\phi(\lambda |x|)) = (1-\phi(\lambda |y|))=1$.

%Integrating by parts twice in the above integral, we get
Integrating the last integral by parts twice
\begin{equation*}
    II_{0,\pm} = |x|^{-\frac{d-1}{2}} |y|^{-\frac{d-1}{2}} \frac{e^{\mp i (|x|+|y|)}}{|x|+|y|} + O\left( |x|^{-\frac{d+3}{2}} |y|^{-\frac{d-1}{2}} \right).
\end{equation*}
By symmetry, the kernel of $T$ is given by a linear combination of 
\begin{equation*}
     \frac{|x|^{-\frac{d-1}{2}} |y|^{-\frac{d-1}{2}}}{|x|+|y|}e^{\mp i (|x|+|y|)} + \begin{cases}
        |x|^{-\frac{d+3}{2}} |y|^{-\frac{d-1}{2}}, & |x|\ge |y|,\\
        |x|^{-\frac{d-1}{2}} |y|^{-\frac{d+3}{2}}, &|x|\le |y|.
    \end{cases} + \begin{cases}
        |x|^{-d} |y|^{2-d}, & |x|\ge |y|,\\
        |x|^{2-d} |y|^{-d}, &|x|\le |y|.
    \end{cases}
\end{equation*}
By Lemma~\ref{Nix}, the last two terms are bounded on $L^p$ for 
\begin{equation*}
    \frac{d}{d \wedge \frac{d+3}{2}} < p<\frac{d}{0\vee \frac{d-3}{2}} \iff \begin{cases}
        1<p<\infty, & 2<d\le 3,\\
        \frac{2d}{d+3}<p<\frac{2d}{d-3}, & d>3.
    \end{cases}
\end{equation*}
Thus, to prove Lemma~\ref{lem:negative:spectral}, it remains to show the $L^p([1,\infty),r^{d-1}dr)$ unboundedness of a linear combination of the operators
\begin{equation*}
    S_{\pm}: f \mapsto \int_1^\infty \frac{x^{-\frac{d-1}{2}} y^{-\frac{d-1}{2}}}{x+y}e^{\mp i (x+y)} f(y) y^{d-1} dy
\end{equation*}
for $\frac{2d}{d-1}\le p < \frac{2d}{d-3}$ and $d>3$. 
%The case $2<d\le 3$ is similar, and we omit the details.
When $2<d\le 3$ , the condition \(p<2d/(d-3)\) is absent, and the same
argument applies for every  \(p\ge2d/(d-1)\).

Consider the function $f_\varepsilon(y) = e^{iy}y^{-d/p} \mathbf{1}_{(2,\varepsilon)}(y)$ with $\varepsilon>100$ large. If $x\in [1,2]$, then
\begin{align*}
    \left|S_+ f_\varepsilon(x)\right| &= x^{-\frac{d-1}{2}} \left| \int_2^\varepsilon \frac{y^{\frac{d-1}{2}-\frac{d}{p}}}{x+y} dy \right|\\
    &\gtrsim \begin{cases}
        \varepsilon^{\frac{d-1}{2}-\frac{d}{p}}, & \frac{2d}{d-1}<p<\frac{2d}{d-3},\\
        \log{\varepsilon}, & p=\frac{2d}{d-1}.
    \end{cases}
\end{align*}
Moreover,
\begin{align*}
    |S_-f_\varepsilon(x)| &= x^{-\frac{d-1}{2}} \left| \int_2^\varepsilon \frac{y^{\frac{d-1}{2}-\frac{d}{p}}}{x+y} e^{2iy} dy \right|\\
    &= \frac{1}{2} x^{-\frac{d-1}{2}} \left| \frac{y^{\frac{d-1}{2}-\frac{d}{p}}e^{2iy}}{x+y}\bigg|_{y=2}^\varepsilon -  \int_2^\varepsilon e^{2iy} \frac{d}{dy}\left(\frac{y^{\frac{d-1}{2}-\frac{d}{p}}}{x+y}\right) dy \right|\\
    &\lesssim \varepsilon^{\frac{d-1}{2}-\frac{d}{p}-1} + 1\\
    &\lesssim 1,
\end{align*}
since $p<\frac{2d}{d-3}$. In addition, one has $\|f_\varepsilon\|_p\lesssim (\log{\varepsilon})^{\frac 1p}$. Write the linear combination of $S_\pm$ as $\sum_{\pm}c_\pm S_\pm$ with $c_\pm \in \mathbb C$ and $c_-,c_+$, not both zero. 
Assume first that \(c_+\ne0\). If \(c_+=0\), then \(c_-\ne0\), and the
same argument applies after replacing \(e^{iy}\) by \(e^{-iy}\).
Then the $L^p$ boundedness of $\sum_{\pm}c_\pm S_\pm$ would imply
\begin{align*}
\begin{cases}
        \varepsilon^{\frac{d-1}{2}-\frac{d}{p}}, & \frac{2d}{d-1}<p<\frac{2d}{d-3},\\
        \log{\varepsilon}, & p=\frac{2d}{d-1}.
    \end{cases} \lesssim \left\| \sum_{\pm} c_\pm S_\pm f_\varepsilon \right\|_p \lesssim \|f_\varepsilon\|_p \lesssim (\log{\varepsilon})^{\frac{1}{p}},
\end{align*}
which is impossible as $\varepsilon \to \infty$.

This proves Lemma~\ref{lem:negative:spectral} and completes the proof of Theorem~\ref{main_spectral_projection}.
\end{proof}

\section{Counterexample for the Bochner--Riesz means}\label{BR}

In this section, we prove the necessity part of Theorem~\ref{main_BR}, thereby showing
that the sufficient conditions established in Theorem~\ref{thm:BRR} are sharp.

\begin{proposition}\label{propBRR}
Let $1\le p\le \infty$. The Bochner--Riesz mean $\sigma_{1}^\delta(\Lapd)$ is unbounded in $L^p(\widetilde{\R})$ if
$$
\delta < |d_1-d_2| \left| \frac{1}{p} - \frac{1}{2} \right|,
$$
or
$$
0< \delta \le D \left|\frac{1}{p}-\frac{1}{2}\right|-\frac{1}{2},
$$
where $D=d_1 \vee d_2$.
\end{proposition}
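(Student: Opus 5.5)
The plan follows the scheme of Section~\ref{SP}. If $\sigma_1^\delta(\Lapd)$ were bounded on $L^p(\widetilde{\R})$, then so would be its off-diagonal localisations $M_{\chi_{[1,\infty)}}\sigma_1^\delta(\Lapd)M_{\chi_{(-\infty,-1]}}$ and the reverse one, because the $kl$ part and $\sigma_{\mathrm{on}}^\delta$ never couple the two ends. Their kernels are $\sigma_{\mathrm{off}}^{\ell j,\pm}(1)$. I would split these kernels as in Proposition~\ref{prop:smallR} with $R=1$. The pieces $\mathcal{I}_{L,\pm}$, $\mathcal{I}_\pm$ and the $(1-\rho(1-\lambda))$ part of $\mathcal{II}_\pm$ are $O(|x|^{-d_\ell}|y|^{2-d_j})$, and hence harmless by Lemma~\ref{Nix}. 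For the remaining endpoint piece $II_0$, I would apply Lemma~\ref{lem:osc-expansion-order-M} with $\psi$ independent of $x,y$ (for $|x|,|y|\ge1$ the $\phi$ cutoffs equal $1$ on $\operatorname{supp}\rho$). This gives the leading kernel
\[
c_\pm\,\frac{|x|^{-\frac{d_\ell-1}{2}}|y|^{-\frac{d_j-1}{2}}}{(|x|+|y|)^{1+\delta}}\,e^{\mp i(|x|+|y|)},
\]
with an error that has an extra factor $(|x|+|y|)^{-1}$. That error is bounded by Lemma~\ref{Nix} in a range containing the $p$'s we test, up to the usual small bookkeeping. By duality (the operator is self-adjoint) it suffices to treat $p>2$.

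\emph{First obstruction} ($\delta<(D-d)(\frac12-\frac1p)$). Take $d_1<d_2$ and map from the $d_2$-end to the $d_1$-end, i.e.\ $\ell=1$, $j=2$. I would reuse \eqref{couterF}: $f_\varepsilon(y)=e^{\mp iy}\mathbf 1_{[\varepsilon,2\varepsilon]}(y)y^{-\frac{d_2-1}{2}}$, with the sign chosen according to the nonzero coefficient, exactly as in Theorem~\ref{fefferman_nondoubling}. Then $\|f_\varepsilon\|_p^p\simeq\varepsilon^{d_2-p\frac{d_2-1}{2}}$. For $x\in[2\varepsilon,3\varepsilon]$ the resonant term gives $|Tf_\varepsilon(x)|\gtrsim\varepsilon^{-\frac{d_1-1}{2}}\varepsilon^{-\delta}$, while the other phase gains one integration by parts and is smaller by $\varepsilon^{-1}$. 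Hence $\|Tf_\varepsilon\|_p^p\gtrsim\varepsilon^{d_1-p\frac{d_1-1}{2}-p\delta}$. Boundedness would force
\[
p\delta\ \ge\ (d_1-d_2)\Big(1-\frac p2\Big),
\qquad\text{i.e.}\qquad
\delta\ \ge\ (d_2-d_1)\Big(\frac12-\frac1p\Big),
\]
which contradicts the assumption as $\varepsilon\to\infty$.

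\emph{Second obstruction} ($0<\delta\le D|\frac1p-\frac12|-\frac12$). Here I would follow Lemma~\ref{lem:negative:spectral} with $d=D$, taking input and output on the $D$-end. This means working with the on-diagonal kernel $\sigma_{\mathrm{on}}^{\ell,\pm}(1)$, $d_\ell=D$, whose $kk$ endpoint piece has the same form as above with $d_\ell=d_j=D$. The $kl$ part of that end is the classical radial Bochner--Riesz operator, and its own necessity is known. The main obstacle is that the two parts could cancel, so I would instead test with data concentrated far out and resonant only with the $kk$ phase $e^{-i(|x|+|y|)}$. This phase is absent from the $kl$ ($ll$) kernel in the regime $|x|\in[1,2]$, $y\to\infty$, since $l$ at bounded $|x|$ is nonoscillatory. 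Concretely, take $f_\varepsilon(y)=e^{iy}y^{-D/p}\mathbf 1_{(2,\varepsilon)}(y)$ and measure the output on $x\in[1,2]$. Then
\[
|Tf_\varepsilon(x)|\gtrsim\int_2^\varepsilon y^{\frac{D-1}{2}-\frac Dp-1-\delta}\,dy,
\]
which diverges, like a power of $\varepsilon$ or like $\log\varepsilon$ at equality, exactly when $\delta\le D(\frac12-\frac1p)-\frac12$. Meanwhile $\|f_\varepsilon\|_p\lesssim(\log\varepsilon)^{1/p}$. At equality the comparison is $\log\varepsilon$ against $(\log\varepsilon)^{1/p}$, which still gives a contradiction. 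The $kl$ contribution on $x\in[1,2]$ involves $\int I_\nu(i\lambda y)$, i.e.\ the phases $e^{\pm iy}$, with bounded $x$-dependence. I expect it is controlled by showing it pairs with $f_\varepsilon$ to give at most the same order; if necessary, one can instead argue via the known necessity for the radial Euclidean Bochner--Riesz operator combined with the reflection decomposition. Verifying this non-cancellation is the step I would check most carefully.
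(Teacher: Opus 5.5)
Your second obstruction has a genuine gap. Putting both input and output on the $D$-end brings in the $kl$ part, and your reason for why it cannot interfere is false. For $x\in[1,2]$ the $kl$ kernel is, up to constants, $\int_0^1(1-\lambda^2)^\delta\lambda\,(xy)^{1-\frac D2}I_\nu(i\lambda x)I_\nu(i\lambda y)\,d\lambda$. The factor $I_\nu(i\lambda x)$ is indeed non-oscillatory, but $I_\nu(i\lambda y)$ carries \emph{both} phases $e^{\pm i\lambda y}$ (Remark~\ref{remark1}). So the endpoint $\lambda=1$ contributes $y^{-\frac{D-1}{2}-1-\delta}\bigl(\alpha(x)e^{iy}+\beta(x)e^{-iy}\bigr)$ with $\beta\not\equiv0$. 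This resonates with your $f_\varepsilon(y)=e^{iy}y^{-D/p}\mathbf 1_{(2,\varepsilon)}(y)$ at exactly the order of the $kk$ main term. Hence on $[1,2]$ the output is $\bigl(\gamma(x)+\beta(x)\bigr)\int_2^\varepsilon y^{\frac{D-1}{2}-\frac Dp-1-\delta}\,dy$ up to lower order, where $\gamma$ is the $kk$ profile. Showing the $kl$ term is ``at most the same order'' therefore proves nothing; you need $\gamma+\beta\neq0$ on a set of positive measure, and you have not shown this. Your fallback does not help either. The on-diagonal $kk$ part is itself unbounded in this range, so unboundedness of the restricted radial Euclidean operator does not transfer to the sum.

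The non-cancellation can be proved. For $1\le x\le y$ the same-end resolvent kernel of Section~\ref{seclap} factorises as $k_2(\lambda y)\bigl[B_2(\lambda)k_2(\lambda x)+v_2\lambda^{d_2-2}l_2(\lambda x)\bigr]$. At $\lambda=\pm i$ the bracket is a nontrivial solution of a second-order ODE, so it has only isolated zeros. The paper avoids the issue altogether. In Lemma~\ref{Counter2} (and already in Lemma~\ref{lem:negative:spectral}, which you cite as your model) the input stays on the $D$-end, but the output is measured at $|x|\in[1,2]$ on the \emph{other} end. The kernel of $M_{\chi_{(-\infty,-1]}}\sigma_1^\delta(\Lapd)M_{\chi_{[1,\infty)}}$ (with $d_2=D$) is purely $kk$. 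Its main term is $|x|^{-\frac{d_1-1}{2}}|y|^{-\frac{d_2-1}{2}}(|x|+|y|)^{-1-\delta}e^{\mp i(|x|+|y|)}$, and your test function gives the same divergent lower bound. Since the output set is bounded, the smaller dimension of that end plays no role.

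Your first obstruction is essentially the paper's Lemma~\ref{Counter1}, with one correction. The next-order kernel $|x|^{-\frac{d_1-1}{2}}|y|^{-\frac{d_2-1}{2}}(|x|+|y|)^{-2-\delta}$ is not $L^p$-bounded throughout the range you test: by \eqref{BBR2} with $m=1$ it needs $\delta\ge(D-d)(\frac12-\frac1p)-1$. You do not need it to be bounded. Applied to $|f_\varepsilon|$ it is pointwise $O(\varepsilon^{-1})$ times the main term on $[2\varepsilon,3\varepsilon]$. The paper instead restricts the range by complex interpolation.
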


\begin{proof}
It follows by complex interpolation that we only need to show $\sigma_{1}^\delta(\Lapd)$ is unbounded in $L^p$ in the following two cases: 
\begin{enumerate}[label=(\Alph*)]
    \item \begin{align*}
        \delta > \max \left( D\left|\frac{1}{p}-\frac{1}{2}\right|-\frac{1}{2}, 0 \right) \quad \textrm{and} \quad \delta < |d_1-d_2| \left|\frac{1}{p}-\frac{1}{2}\right|,
        \end{align*}
        \item \begin{align*}
            \delta \ge |d_1-d_2| \left|\frac{1}{p}-\frac{1}{2}\right|, \quad \textrm{and}\quad 0<\delta \le D\left|\frac{1}{p}-\frac{1}{2}\right|-\frac{1}{2}.
        \end{align*}
\end{enumerate}
The result now follows directly from Lemmas~\ref{Counter1}
and~\ref{Counter2}.
\end{proof}

\begin{remark}
The two unboundedness regions in  Proposition~\ref{propBRR} are illustrated below.

\begin{center}\label{fig2}
% Parameters
\def\D{5}   % D = d_1 \vee d_2
\def\m{2}   % |d_1-d_2| = D-d

\pgfmathsetmacro{\dmin}{\D-\m}                  % d = d_1 \wedge d_2
\pgfmathsetmacro{\ytop}{(\D-1)/2}
\pgfmathsetmacro{\yredaxis}{\m/2}               % = (D-d)/2
\pgfmathsetmacro{\xblueL}{0.5-1/(2*\D)}
\pgfmathsetmacro{\xblueR}{0.5+1/(2*\D)}
\pgfmathsetmacro{\xintL}{0.5-1/(2*\dmin)}
\pgfmathsetmacro{\xintR}{0.5+1/(2*\dmin)}
\pgfmathsetmacro{\yint}{\m/(2*\dmin)}           % = (D-d)/(2d)

\begin{tikzpicture}[
    x=10.8cm,
    y=2.8cm,
    line cap=round,
    line join=round,
    font=\small
]

% axes
\draw[->, very thick] (0,0) -- (1.04,0);
\draw[->, very thick] (0,0) -- (0,\ytop+0.30);

% shaded region (1)
\fill[gray!75]
  (\xintL,\yint) -- (0.5,0) -- (\xintR,\yint) -- (\xblueR,0) -- (\xblueL,0) -- cycle;

% shaded region (2)
\fill[gray!75]
  (0,\ytop) -- (0,\yredaxis) -- (\xintL,\yint) -- cycle;
\fill[gray!75]
  (1,\ytop) -- (1,\yredaxis) -- (\xintR,\yint) -- cycle;

% blue normal BR line
\draw[blue, line width=1.5pt]
  (0,\ytop) -- (\xblueL,0) -- (\xblueR,0) -- (1,\ytop);

% red new restriction line
\draw[red, line width=1.3pt]
  (0,\yredaxis) -- (0.5,0) -- (1,\yredaxis);

% key points
\fill[blue] (\xblueL,0) circle (1.6pt);
\fill[blue] (\xblueR,0) circle (1.6pt);
\fill[red]  (0,\yredaxis) circle (1.6pt);
\fill       (0.5,0) circle (1.6pt);
\fill       (\xintL,\yint) circle (1.6pt);
\fill       (\xintR,\yint) circle (1.6pt);

% dashed guides
\draw[dashed] (\xintL,\yint) -- (\xintL,0);
\draw[dashed] (\xintL,\yint) -- (0,\yint);
\draw[dashed] (\xintR,\yint) -- (\xintR,0);

% x-axis ticks
\draw (0,0) -- +(0,-0.020);
\draw (\xblueL,0) -- +(0,-0.018);
\draw (0.5,0) -- +(0,-0.020);
\draw (\xblueR,0) -- +(0,-0.018);
\draw (\xintL,0) -- +(0,-0.018);
\draw (\xintR,0) -- +(0,-0.018);
\draw (1,0) -- +(0,-0.020) node[below=4pt] {$1$};

% y-axis ticks
\draw (0,\ytop) -- +(-0.010,0) node[left=6pt] {$\dfrac{D-1}{2}$};
\draw (0,\yredaxis) -- +(-0.010,0);
\draw (0,\yint) -- +(-0.010,0);

% only one 0
\node[below left=2pt] at (0,0) {$0$};

% x-axis labels
\node[below=4pt] at (0.5,0) {$\frac{1}{2}$};
\node[below=4pt, anchor=north east] at (0.45,0) {$\frac{D-1}{2D}$};
\node[below=4pt, anchor=north west] at (0.29,0) {$\frac{d-1}{2d}$};
\node[below=4pt, anchor=north west] at (0.55,0) {$\frac{D+1}{2D}$};
\node[below=4pt, anchor=north east] at (0.71,0) {$\frac{d+1}{2d}$};

% y-axis labels
\node[left=7pt] at (0,\yredaxis) {$\frac{D-d}{2}$};
\node[left=7pt] at (0,\yint) {$\frac{D-d}{2d}$};

% axis labels
\node at (0.50,-0.60) {$\dfrac{1}{p}$};
\node at (-0.14,\ytop/2) {$\delta$};

% line labels
\node[blue] at (0.70,\ytop*0.79)
  {$\delta=\max\!\left(D\left|\frac{1}{p}-\frac{1}{2}\right|-\frac{1}{2},\,0\right)$};

\node[red] at (0.53,\ytop*0.42)
  {$\delta=(D-d)\left|\frac{1}{p}-\frac{1}{2}\right|$};

% definition box
\node[
    anchor=north east,
    fill=white,
    rounded corners=2pt,
    inner sep=3pt
] at (1.02,\ytop+0.26)
{$D=d_1 \vee d_2,\quad d=d_1\wedge d_2,\quad D-d=|d_1-d_2|$};

% labels (1) and (2)
\node[yellow, font=\bfseries\small] at (0.42,0.07) {\((\mathrm{A})\)};
\node[yellow, font=\bfseries\small] at (0.58,0.07) {\((\mathrm{A})\)};
\node[green, font=\bfseries\large] at (0.12,0.95) {\((\mathrm{B})\)};
\node[green, font=\bfseries\large] at (0.88,0.95) {\((\mathrm{B})\)};

\end{tikzpicture}

\captionof{figure}{The two regions in Proposition~6.1.}

\end{center}
\end{remark}

\begin{lemma}\label{Counter1}
Let $1\le p \le \infty$. Suppose 
\begin{equation}\label{counter1,1}
\delta > \max \left( D \left|\frac{1}{p}-\frac{1}{2}\right|-\frac{1}{2}, 0 \right),
\end{equation}
where $D=d_1 \vee d_2$.

Then the Bochner--Riesz mean $\sigma_{1}^\delta(\Lapd)$ is unbounded in $L^p(\widetilde{\R})$ if
$$
\delta < |d_1-d_2| \left| \frac{1}{p} - \frac{1}{2} \right|.
$$
\end{lemma}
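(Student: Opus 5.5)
Under the hypothesis \eqref{counter1,1}, all components of $\sigma_1^\delta(\Lapd)$ other than the principal off-diagonal oscillatory piece are $L^p$-bounded. The plan is to isolate that piece and then test it with a Fefferman-type function as in Theorem~\ref{fefferman_nondoubling}.

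By duality (the operator is self-adjoint and the condition is symmetric in $1/p-1/2$), it suffices to treat $p>2$; if needed we work with $p'$ instead. Assume $d_1<d_2$, so $d=d_1$ and $D=d_2$. The $kl$ part is bounded for $\delta>\max(D|1/p-1/2|-1/2,0)$ by the classical radial Bochner--Riesz theory, as in the proof of Theorem~\ref{thm:BRR}. The on-diagonal part is bounded by Corollary~\ref{cor1}. For the off-diagonal part with $R=1$, we repeat the decomposition of Proposition~\ref{prop:smallR}. The pieces $\mathcal I_{L,\pm}$ and $\mathcal I_{\pm}$, the non-endpoint part $II_1$, and all terms with $m\ge1$ in the expansion of Lemma~\ref{lem:osc-expansion-order-M} are dominated by kernels that Lemma~\ref{Nix} handles. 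The remaining leading term is $c_{0,\delta}\,\varphi(0)$ times the kernel
\[
K_\pm(x,y)=\frac{|x|^{-\frac{d_\ell-1}{2}}|y|^{-\frac{d_j-1}{2}}}{(|x|+|y|)^{1+\delta}}\,e^{\mp i(|x|+|y|)} .
\]
Here $\varphi(0)\neq0$, and $(1-\phi(\lambda|x|))=1$ near $\lambda=1$, as in Lemma~\ref{lem:negative:spectral}. I would check that the error terms of the form $|x|^{-\frac{d_\ell-1}{2}}|y|^{-\frac{d_j-1}{2}}(|x|+|y|)^{-2-\delta}$ and the $m\ge1$ terms satisfy the hypotheses of Lemma~\ref{Nix}, including the exponent condition $p(\alpha+\beta-n_1)\ge n_2-n_1$, in the range \eqref{counter1,1}. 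This bookkeeping is routine but should be done for both directions $\ell j=12$ and $21$.

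For the main term from the $d_2$ end to the $d_1$ end, from $L^p(r^{d_2-1}dr)$ to $L^p(r^{d_1-1}dr)$, take $f_\varepsilon(y)=e^{-iy}\mathbf 1_{[\varepsilon,2\varepsilon]}(y)y^{-\frac{d_2-1}{2}}$ as in \eqref{couterF}, adjusting the phase according to which coefficient $c_\pm$ is nonzero. Then $\|f_\varepsilon\|_p^p\simeq\varepsilon^{d_2-p\frac{d_2-1}{2}}$. For $x\in[2\varepsilon,3\varepsilon]$, the non-oscillating term gives
\[
|x|^{-\frac{d_1-1}{2}}\int_\varepsilon^{2\varepsilon}(x+y)^{-1-\delta}\,dy\gtrsim\varepsilon^{-\frac{d_1-1}{2}-\delta}.
\]
The term with phase $e^{-2iy}$ is smaller by a factor $\varepsilon^{-1}$, after one integration by parts. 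Hence the output norm is at least $\varepsilon^{\frac{d_1}{p}-\frac{d_1-1}{2}-\delta}$, while the input norm is $\varepsilon^{\frac{d_2}{p}-\frac{d_2-1}{2}}$. Their ratio is $\varepsilon^{(d_2-d_1)(\frac12-\frac1p)-\delta}$, which tends to infinity exactly when $\delta<|d_1-d_2|(\frac12-\frac1p)$. This contradicts boundedness.

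The main obstacle is the localisation step. To conclude unboundedness of $\sigma_1^\delta$, I must verify that all discarded terms are genuinely bounded, or at least that they contribute $o(\varepsilon^{-\frac{d_1-1}{2}-\delta})$ pointwise on $[2\varepsilon,3\varepsilon]$ for this particular $f_\varepsilon$. The latter is more robust, so I would follow it as a fallback: estimate each error kernel applied to $f_\varepsilon$ directly, noting that it carries an extra factor $\varepsilon^{-1}$ or is non-oscillatory with size $|x|^{-d_\ell}|y|^{2-d_j}$. That size gives $\varepsilon^{-d_1+2-d_2}\cdot\varepsilon^{\frac{d_2+1}{2}}$, which is smaller than the main term since $d_2>2$ and $\delta$ is below $(d_2-d_1)/2$.
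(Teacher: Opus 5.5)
Your proposal is correct and follows essentially the same route as the paper. You reduce to the off-diagonal $kk$ block at $R=1$, run the decomposition of Proposition~\ref{prop:smallR} with the endpoint expansion of Lemma~\ref{lem:osc-expansion-order-M} to isolate the kernel $|x|^{-\frac{d_\ell-1}{2}}|y|^{-\frac{d_j-1}{2}}(|x|+|y|)^{-1-\delta}e^{\mp i(|x|+|y|)}$, and test it with \eqref{couterF} placed on the larger end.

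The differences are minor. Before discarding the $m\ge1$ terms, the paper uses complex interpolation to restrict to $\delta\ge|d_1-d_2|\,|\frac1p-\frac12|-1$. Your direct claim that these terms are bounded throughout \eqref{counter1,1} is also valid, since $\delta>D|\frac1p-\frac12|-\frac12\ge|d_1-d_2|\,|\frac1p-\frac12|-\frac12$. Taking the input on the $d_2$ end and measuring the output on the $d_1$ end also makes the paper's separate bound for $\mathcal{S}^{21}_\pm$ unnecessary. In your fallback comparison, the condition actually used is $d_1>2$, not $d_2>2$; this is harmless since both are assumed.
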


\begin{proof}
By the classical results of Bochner--Riesz means for radial functions, it suffices to show the unboundedness of the \(kk\) part. Next, thanks to Corollary~\ref{cor1}, we are reduced to prove the unboundedness of the linear combination of $\sigma_{\mathrm{off}}^{\ell j, +}(1)$ and $\sigma_{\mathrm{off}}^{\ell j, -}(1)$. We decompose 
\begin{align*}
    \sigma_{\mathrm{off}}^{\ell j, \pm}(1)(x,y) = \mathcal{I}_{L,\pm}^1(x,y)+\mathcal{I}_{H,\pm}^1(x,y) 
\end{align*}
as in Proposition~\ref{prop:smallR}. Next, since $\mathcal{I}_{L,\pm}^1$ acts boundedly in $L^p$ for all $1<p<\infty$, it is enough to consider $\mathcal{I}_{H,\pm}^1(x,y)$. Recall that $\mathcal{I}_{H,\pm}^1 = \mathcal{I}_{\pm}^1 + \mathcal{II}_{\pm}^1$, where for $|x|\ge |y|$
\begin{align*}
&\mathcal{I}_{\pm}^1(x,y) = |x|^{1-\frac{d_\ell}{2}}|y|^{1-\frac{d_j}{2}}
\int_0^1
(1-\phi(\lambda |x|))\phi(\lambda |y|)(1-\lambda^2)^\delta
\lambda^{\frac{d_\ell+d_j}{2}-1}
K_{\nu_\ell}(\pm i\lambda |x|)K_{\nu_j}(\pm i\lambda |y|) d\lambda,\\
&\mathcal{II}_{\pm}^1(x,y)
= |x|^{1-\frac{d_\ell}{2}}|y|^{1-\frac{d_j}{2}}
\int_0^1
(1-\phi(\lambda |x|))(1-\phi(\lambda |y|))(1-\lambda^2)^\delta
\lambda^{\frac{d_\ell+d_j}{2}-1}
K_{\nu_\ell}(\pm i\lambda |x|)K_{\nu_j}(\pm i\lambda |y|) d\lambda.
\end{align*}
By Lemma~\ref{lem:I-smallR}, one shows that
$$
\mathcal{I}_{\pm}^1 \sim |x|^{\frac{1-d_\ell}{2}}|y|^{2-d_j}
\int_0^1
(1-\phi(\lambda |x|))\phi(\lambda |y|)(1-\lambda^2)^\delta
\lambda^{\frac{d_\ell-1}{2}}e^{\mp i\lambda |x|} d\lambda
$$
Observe that the integrand above is smooth and supported in $\left[ \frac{1}{2}|x|^{-1}, \frac{3}{4}|y|^{-1} \right] \subset [0,1]$. Integration by parts $N = \left\lceil \frac{d_\ell+1}{2} \right\rceil$ times yields that for $|x|\ge |y|$
\begin{align*}
    |\mathcal{I}_{\pm}^1| &\lesssim |x|^{-\frac{d_\ell-1}{2}-N}|y|^{2-d_j} \int_0^1 \left| \frac{d^N}{d\lambda^N} (1-\phi(\lambda |x|))\phi(\lambda |y|)(1-\lambda^2)^\delta
\lambda^{\frac{d_\ell-1}{2}}    \right| d\lambda \\
&\lesssim |x|^{-\frac{d_\ell-1}{2}-N}|y|^{2-d_j} \int_{2^{-1}|x|^{-1}}^1 (1-\lambda)^{\delta-N} \lambda^{\frac{d_\ell-1}{2}-N} d\lambda\\
& \lesssim |x|^{-\frac{d_\ell-1}{2}-N}|y|^{2-d_j} \int_{2^{-1}|x|^{-1}}^1 \lambda^{\frac{d_\ell-1}{2}-N} d\lambda\\
&\lesssim |x|^{-d_\ell} |y|^{2-d_j},
\end{align*}
where the second to last inequality follows by the uniform upper bound of $(1-\lambda)^{\delta-N}$, i.e., it is bounded by $1$ if $\delta \ge N$ and is controlled by $4^{N-\delta}$ if $\delta<N$. Thus by Lemma~\ref{Nix}, $\mathcal{I}_{\pm}^1$ acts boundedly in $L^p$ for all $1<p<\infty$.

We are therefore reduced to showing the linear combination of $\mathcal{II}_{\pm}^1$ is $L^p$-unbounded.

Following by Lemma~\ref{lem:II-smallR}, we decompose again via function $\rho(1-\lambda)$:
\begin{equation*}
    \mathcal{II}_{\pm}^1(x,y) \sim 
|x|^{\frac{1-d_\ell}{2}}|y|^{\frac{1-d_j}{2}} \left( II_{0,\pm} + II_{1,\pm} \right),
\end{equation*}
where
\begin{equation*}
    II_{0,\pm} = \int_0^1 \rho(1-\lambda)
(1-\phi(\lambda |x|))(1-\phi(\lambda |y|))(1-\lambda^2)^\delta
\lambda^{\frac{d_\ell+d_j}{2}-2}
e^{\mp i\lambda (|x|+|y|)} d\lambda,
\end{equation*}
and
\begin{equation*}
    II_{1,\pm} = \int_0^1 \left(1-\rho(1-\lambda) \right)
(1-\phi(\lambda |x|))(1-\phi(\lambda |y|))(1-\lambda^2)^\delta
\lambda^{\frac{d_\ell+d_j}{2}-2}
e^{\mp i\lambda (|x|+|y|)} d\lambda.
\end{equation*}

By the proof of Lemma~\ref{lem:II-smallR}, $|II_{1,\pm}|$ is bounded by $|x|^{-N} |y|^{N+1-\frac{d_\ell+d_j}{2}}$ for any $N> \frac{d_\ell+d_j}{2}-1$. Therefore, since $|x|\ge |y|$, it is bounded by $|x|^{-d_\ell} |y|^{2-d_j}$, acting as a bounded operator for all $1<p<\infty$.

It remains to treat $II_{0,\pm}$. By the change of variables $s=1-\lambda$, then Lemma~\ref{lem:osc-expansion-order-M} (see the proof of Lemma~\ref{lem:II-smallR} for detail), it becomes
\begin{equation}\label{II_00}
    II_{0,\pm} = e^{\mp i(|x|+|y|)} \sum_{m=0}^{N-1} c_{m,\delta} \varphi_{x,y,1}^{(m)}(0) \left(|x|+|y|\right)^{-1-\delta-m} + O\left(\left(|x|+|y|\right)^{-N}\right),
\end{equation}
where
\begin{equation*}
    \varphi_{x,y,1}(s)
=
\rho(s) (1-\phi((1-s)|x|))(1-\phi((1-s)|y|))(2-s)^\delta(1-s)^{\frac{d_\ell+d_j}{2}-2}.
\end{equation*}
Note that $\varphi_{x,y,1}(0) = \rho(0) (1-\phi(|x|)) (1-\phi(|y|)) 2^\delta = 2^\delta$ and $\left\| \varphi_{x,y,1}^{(l)}  \right\|_\infty \le c_{\delta,l}$ for all $l\ge 0$. We therefore can write
\begin{equation}
    II_{0,\pm} = c_{\delta} \frac{e^{\mp i(|x|+|y|)}}{(|x|+|y|)^{\delta+1}} + \sum_{m=1}^{N-1} O\left( (|x|+|y|)^{-1-\delta-m} \right)  + O\left(\left(|x|+|y|\right)^{-N}\right),
\end{equation}
and hence for $|x|\ge |y|$
\begin{equation}\label{eq_mainterm}
    |x|^{\frac{1-d_\ell}{2}}|y|^{\frac{1-d_j}{2}} II_{0,\pm} = \frac{e^{\mp i(|x|+|y|)} |x|^{\frac{1-d_\ell}{2}}|y|^{\frac{1-d_j}{2}}}{(|x|+|y|)^{\delta+1}} + \sum_{m=1}^{N-1} \frac{|x|^{\frac{1-d_\ell}{2}}|y|^{\frac{1-d_j}{2}}}{(|x|+|y|)^{\delta+1+m}} + \frac{|x|^{\frac{1-d_\ell}{2}}|y|^{\frac{1-d_j}{2}}}{(|x|+|y|)^N}.
\end{equation}
By Lemma~\ref{lem:II-smallR}, the last term above is again $O\left(|x|^{-d_\ell} |y|^{2-d_j}\right)$, so it acts boundedly in $L^p$ for all $1<p<\infty$. For the lower order terms, it is plain that for each $1\le m\le N-1$, it is bounded by the kernel
\begin{equation*}
    K(x,y):= \begin{cases}
        |x|^{-\frac{d_\ell-1}{2}} |y|^{-\delta-\frac{d_j+1}{2}-m}, & |x|\le |y|,\\
        |x|^{-\delta-\frac{d_\ell+1}{2}-m} |y|^{-\frac{d_j-1}{2}}, & |x|\ge |y|.
    \end{cases}
\end{equation*}
By Lemma~\ref{Nix}, the kernel $K$ acts as a bounded operator in $L^p$ for
\begin{equation}\label{BBR1}
    \frac{d_\ell}{d_\ell \wedge \left( \delta+\frac{d_\ell+1}{2}+m \right)} < p < \frac{d_j}{0 \vee \left( \frac{d_j-1}{2} - \delta-m \right)},
\end{equation}
and 
\begin{equation}\label{BBR2}
    p \left( \delta+m+\frac{d_\ell+1}{2}+\frac{d_j-1}{2} - d_j \right) \ge d_\ell-d_j \iff \delta \ge (d_\ell-d_j)\left( \frac{1}{p} - \frac{1}{2} \right) - m.
\end{equation}
By complex interpolation, it is enough to show the $L^p$-unboundedness of the first term of \eqref{eq_mainterm} for 
\begin{align}\label{Ubddp}
    \delta > \max \left( D\left|\frac{1}{p}-\frac{1}{2}\right|-\frac{1}{2}, 0 \right) \quad \textrm{and} \quad |d_1-d_2| \left|\frac{1}{p}-\frac{1}{2}\right| - 1 \le \delta < |d_1-d_2| \left|\frac{1}{p}-\frac{1}{2}\right|.
\end{align}
Define
\begin{align}
    \mathcal{S}_{\pm}^{\ell j}f(x) = \int_1^\infty \frac{x^{-\frac{d_\ell-1}{2}} y^{-\frac{d_j-1}{2}}}{(x+y)^{\delta+1}} e^{\mp i(x+y)} f(y) y^{d_j-1} dy.
\end{align}
Then we are reduced to proving the  $L^p$-unboundedness of $\sum_{\pm}\sum_{\ell \ne j}c_{\pm}^{\ell j}\mathcal{S}_{\pm}^{\ell j}$, where $c_{\pm}^{\ell j}\in \C$ for $p$ in \eqref{Ubddp}.

Without loss of generality, we suppose that $d_1<d_2$. Let $p>2$. We claim first that $\mathcal{S}_{\pm}^{21}$ is $L^p$-bounded for $p$ in \eqref{Ubddp} and $p>2$. Indeed, note that
\begin{align*}
    |\mathcal{S}_{\pm}^{21}(x,y)| \lesssim \begin{cases}
    |x|^{-\frac{d_2-1}{2}} |y|^{-\frac{d_1+1}{2}-\delta}, & |x|\le |y|,\\
        |x|^{-\frac{d_2+1}{2}-\delta} |y|^{-\frac{d_1-1}{2}}, & |x|\ge |y|,
    \end{cases}
\end{align*}
which acts boundedly in $L^p$ for $p$ in \eqref{counter1,1} and 
\begin{equation*}
    \delta \ge (d_2-d_1)\left(\frac{1}{p}-\frac{1}{2}\right).
\end{equation*}
Therefore, we are reduced to showing the linear combination of $\mathcal{S}_{\pm}^{12}$ is unbounded from 
$$L^p\left( [1,\infty), r^{d_2-1}dr \right) \to L^p\left( [1,\infty), r^{d_1-1}dr \right),
$$
where $d_1<d_2$, $p$ in \eqref{counter1,1} and $p>2$ and $\delta< (d_1-d_2)\left(\frac{1}{p}-\frac{1}{2}\right)$.

Without loss of generality, further assume $c_-^{12} \ne 0$. Consider function $f_\varepsilon(y) = e^{-iy} \mathbf{1}_{[\varepsilon,2\varepsilon]}(y) y^{-\frac{d_2-1}{2}}$ with $\varepsilon\ge 1$ (if $c_-^{12}=0$, thus $c_+^{12}\ne0$, then we put $f_\varepsilon(y) = e^{iy} \mathbf{1}_{[\varepsilon,2\varepsilon]}(y) y^{-\frac{d_2-1}{2}}$ instead). Then for $x\in [3\varepsilon, 4\varepsilon]$,
\begin{align}\label{counter1}
    |\mathcal{S}_-^{12}f_\varepsilon(x)| = \left| \int_\varepsilon^{2\varepsilon} \frac{x^{-\frac{d_1-1}{2}} y^{-\frac{d_2-1}{2}}}{(x+y)^{\delta+1}} y^{\frac{d_2-1}{2}} dy\right| \gtrsim  \varepsilon^{-\frac{d_1-1}{2}-\delta},
\end{align}
and 
\begin{align}\label{counter2}
    |\mathcal{S}_+^{12}f_\varepsilon(x)| &\lesssim \varepsilon^{-\frac{d_1-1}{2}} \left|  \int_\varepsilon^{2\varepsilon} \frac{e^{-2iy}}{(x+y)^{\delta+1}} dy  \right|\\ \nonumber
    &\lesssim \varepsilon^{-\frac{d_1-1}{2}} \left| \left[ \frac{e^{-2iy}}{(x+y)^{\delta+1}} \right]_{y=\varepsilon}^{2\varepsilon} + (\delta+1)  \int_\varepsilon^{2\varepsilon} \frac{e^{-2iy}}{(x+y)^{\delta+2}} dy  \right|\\ \nonumber
    &\lesssim \varepsilon^{-\frac{d_1+1}{2}-\delta}.
\end{align}
Moreover, a direct computation yields
\begin{align}\label{counter3}
    \|f_\varepsilon\|_p \lesssim \varepsilon^{\frac{d_2}{p} - \frac{d_2-1}{2}}.
\end{align}
Combining \eqref{counter1}, \eqref{counter2} and \eqref{counter3}, we conclude for $\varepsilon$ large,
\begin{align*}
    \left\| \sum_{\pm} c_\pm^{12} \mathcal{S}_{\pm}^{12}f_\varepsilon \right\|_{L^p([1,\infty), x^{d_1-1}dx)} \ge \left\| \sum_{\pm} c_\pm^{12} \mathcal{S}_{\pm}^{12}f_\varepsilon \right\|_{L^p([3\varepsilon, 4\varepsilon],x^{d_1-1}dx)} \gtrsim \varepsilon^{-\frac{d_1-1}{2}-\delta+\frac{d_1}{p}}.
\end{align*}
So the $L^p$ boundedness of $\sum_{\pm} c_\pm^{12} \mathcal{S}_{\pm}^{12} $ would imply
\begin{equation*}
    \varepsilon^{-\frac{d_1-1}{2}-\delta+\frac{d_1}{p}} \lesssim \varepsilon^{\frac{d_2}{p} - \frac{d_2-1}{2}} \iff \varepsilon^{(d_1-d_2)\left(\frac{1}{p}-\frac{1}{2}\right)-\delta}  \lesssim 1,
\end{equation*}
which cannot hold as $\varepsilon \to \infty$.

The proof of Lemma~\ref{Counter1} is now complete.

\end{proof}

\begin{remark}
Lemma~\ref{Counter1} is based on exactly the same counterexample as Theorem~\ref{fefferman_nondoubling}. More precisely, we use the same test function $f_\varepsilon$ supported on the larger end and evaluate the output on the smaller end. The only difference is that, in the Bochner--Riesz case, the kernel carries the additional factor $(1-\lambda^2)^\delta$, which modifies the size of the leading term by replacing $(|x|+|y|)^{-1}$ with $(|x|+|y|)^{-1-\delta}$. Thus the geometry of the obstruction is unchanged: the contradiction still comes from the mismatch between the two end dimensions, while the order $\delta$ only changes the decay rate of the main term.
\end{remark}

\begin{lemma}\label{Counter2}
Let $1\le p\le \infty$. Suppose that $\delta>0$ and
\begin{align}\label{BR2}
\delta \ge |d_1-d_2| \left| \frac{1}{p} - \frac{1}{2} \right|.
\end{align}
Then the Bochner--Riesz mean $\sigma_{1}^\delta(\Lapd)$ is unbounded in $L^p(\widetilde{\R})$ if
\begin{equation}\label{counter2.1}
    0< \delta \le D \left|\frac{1}{p}-\frac{1}{2}\right|-\frac{1}{2},
\end{equation}
where $D=d_1 \vee d_2$.
\end{lemma}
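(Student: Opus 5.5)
Restricting to the end of larger dimension $D$ reduces Lemma~\ref{Counter2} to the classical radial Bochner--Riesz obstruction, since the range \eqref{counter2.1} is exactly where Bochner--Riesz means fail for radial functions on $\mathbb R^D$. Assume without loss of generality $d_2=D$ and, by self-adjointness and duality, $p>2$. Then \eqref{counter2.1} reads $p\ge \frac{2D}{D-1-2\delta}$. If $\sigma_1^\delta(\Lapd)$ were bounded on $L^p(\widetilde{\R})$, so would be its localisation $T:=M_{\chi_{[1,\infty)}}\sigma_1^\delta(\Lapd)M_{\chi_{[1,\infty)}}$, acting on $L^p([1,\infty),r^{D-1}dr)$. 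I will show that $T$ is unbounded.

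By the decomposition of Subsection~\ref{ssec2.5}, $T=M_{\chi_{[1,\infty)}}\sigma_1^{\delta,kl}(\Delta^{(0,\infty)}_{\Neu,D})M_{\chi_{[1,\infty)}}+\sigma_{\mathrm{on}}^{2,+}(1)-\sigma_{\mathrm{on}}^{2,-}(1)$.

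\textbf{Step 1: the $kk$ on-diagonal part is bounded.} I claim $\sigma_{\mathrm{on}}^{2,\pm}(1)$ is bounded in this range. Its kernel has the same structure as the off-diagonal one treated in Lemma~\ref{lem:II-smallR}, with $A$ replaced by $B_2$, which carries no oscillating factor $e^{2i\lambda}$. The leading term is therefore $|x|^{-\frac{D-1}{2}}|y|^{-\frac{D-1}{2}}e^{\mp i(|x|+|y|)}(|x|+|y|)^{-1-\delta}$, plus errors bounded by $|x|^{-D}|y|^{2-D}$ (and its symmetric version). By Lemma~\ref{Nix}, with $\alpha'=\delta+\frac{D+1}{2}$ and $\beta=\frac{D+1}{2}+\delta$, the modulus kernel is bounded whenever $\frac{2D}{D+1+2\delta}<p<\frac{2D}{D-1-2\delta}$. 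This range excludes ours, so the pure-modulus bound does not suffice.

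A better route is to keep this term as part of the counterexample rather than bound it. Write the full localised kernel on the end as
\[
x^{-\frac{D-1}{2}}y^{-\frac{D-1}{2}}\Bigl[a(x,y)\,\frac{\cos(x-y)}{|x-y|^{1+\delta}}\text{-type terms}+\sum_\pm c_\pm\frac{e^{\mp i(x+y)}}{(x+y)^{1+\delta}}\Bigr]+\text{bounded errors}.
\]
The known radial $kl$ kernel also contributes terms with phases $e^{\pm i(x+y)}$ and $e^{\pm i(x-y)}$ and amplitude $(1+|x\mp y|)^{-1-\delta}$.

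\textbf{Step 2: test function.} Following the test function in Lemma~\ref{lem:negative:spectral}, I choose a function concentrated near $r=1$: take $f=\mathbf 1_{[1,2]}$, or a smooth bump there. I evaluate $Tf(x)$ for $x\to\infty$. Up to lower-order terms, the $kl$ part gives $c\,x^{-\frac{D+1}{2}-\delta}\cos(x-\theta)$, which is the classical radial Bochner--Riesz kernel asymptotics. The $kk$ part gives $x^{-\frac{D+1}{2}-\delta}\sum_\pm c'_\pm e^{\mp ix}\widehat g(\pm1)$ for an explicit $g$. The total is $x^{-\frac{D+1}{2}-\delta}\,\mathrm{Re}(\kappa e^{ix})$ with $\kappa\in\C$.

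If $\kappa\neq0$, then $\int^\infty |Tf|^p x^{D-1}dx=\infty$ exactly when $p(\frac{D+1}{2}+\delta)\le D$… This is the wrong direction, so $f$ must instead be spread out. Following Lemma~\ref{lem:negative:spectral}, take $f_\varepsilon(y)=e^{iy}y^{-D/p}\mathbf 1_{(2,\varepsilon)}(y)$, so that $\|f_\varepsilon\|_p\lesssim(\log\varepsilon)^{1/p}$. For $x\in[1,2]$ the phase $e^{-i(x+y)}$ cancels and yields
\[
|Tf_\varepsilon(x)|\gtrsim\int_2^\varepsilon y^{\frac{D-1}{2}-\delta-1-\frac Dp}\,dy,
\]
which is $\gtrsim\log\varepsilon$ or a positive power of $\varepsilon$ exactly when $p\ge\frac{2D}{D-1-2\delta}$, i.e. under \eqref{counter2.1}. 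The nonresonant phases are handled by integration by parts as in that lemma and contribute $O(1)$, which is smaller than the main term.

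\textbf{Main obstacle.} The hard step is ensuring that the resonant coefficients do not cancel. The $kl$ (radial Euclidean) part and the $kk$ reflection part both produce $e^{-i(x+y)}$ terms, and their coefficients could sum to zero. I would first try a different handle: choose the sign of the test phase to select the conjugate phase, using $e^{-iy}$ instead of $e^{iy}$, as in Lemma~\ref{lem:negative:spectral}. If both resonant coefficients vanished simultaneously, the kernel would be real-analytic cancelled in both phases. I would rule this out by comparing the explicit constants in Remark~\ref{remark1}, where the $kk$ coefficient $B_2$ has the nonzero leading constant $C_{d_1,d_2}$.
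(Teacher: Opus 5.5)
Restricting to the diagonal block $T=M_{\chi_{[1,\infty)}}\sigma_1^\delta(\Lapd)M_{\chi_{[1,\infty)}}$ on the $D$-end is a different route from the paper's, but your proposal stops at exactly the step that decides the matter. As you observe, in the range \eqref{counter2.1} the reflected term $\sigma^{2,\pm}_{\mathrm{on}}(1)$ cannot be bounded separately, so your Step~1 claim is false there. Hence $Tf_\varepsilon$ on $[1,2]$ gets resonant contributions from both the radial $kl$ part and the $kk$ part, and you must show that they do not cancel.

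You leave this open, and your proposed remedy does not address it. For $\lambda$ near $1$ and $x\in[1,2]$, neither the small- nor the large-argument asymptotics applies. A nonzero leading constant of $B_2$ also says nothing about whether two contributions sum to zero; if $B_2(i)$ vanished, there would simply be nothing to cancel. Your phase bookkeeping is also inaccurate. Against the test phase $e^{iy}$, the $kl$ term with phase $e^{i(x-y)}$ is resonant too, not an $O(1)$ nonresonant term.

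The gap closes once you stop expanding in $x$. For $x\in[1,2]$ and $y\to\infty$, the endpoint expansion of Lemma~\ref{lem:osc-expansion-order-M} at $\lambda=1$ shows that the part of the kernel of $T$ carrying $e^{-iy}$ is $\Phi(x)\,y^{-\frac{D+1}{2}-\delta}e^{-iy}$, up to lower-order terms, where
\[
\Phi(x)=c_1\,l_D(ix)+c_2\,k_D(ix).
\]
Here $c_1\neq0$ comes from the $kl$ spectral density \eqref{spectral_measure}, and $c_2$ is a multiple of $B_2(i)$. Since $l_D(i\,\cdot)$ and $k_D(i\,\cdot)$ are linearly independent solutions of $f''+\frac{D-1}{r}f'=-f$, $\Phi$ has only isolated zeros. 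Hence, whatever $c_2$ is,
\[
|Tf_\varepsilon|\gtrsim\int_2^\varepsilon y^{\frac{D-1}{2}-\frac Dp-\delta-1}\,dy
\]
on a subset of $[1,2]$ of positive measure. The remaining terms are bounded pointwise on $[1,2]$ as in Lemma~\ref{lem:negative:spectral}, and in this form the argument does not even use \eqref{BR2}.

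The paper avoids the issue entirely. It tests the off-diagonal block $M_{\chi_{(-\infty,-1]}}\sigma_1^\delta(\Lapd)M_{\chi_{[1,\infty)}}=\sigma_{\mathrm{off}}^{12,+}(1)-\sigma_{\mathrm{off}}^{12,-}(1)$, with input on the $D$-end and output on $[-2,-1]$ of the smaller end. This block has no $kl$ component, and each sign carries the single phase $e^{\mp i(|x|+|y|)}$. The same test function $e^{\mp iy}y^{-D/p}\mathbf 1_{(2,\varepsilon)}(y)$ picks out exactly one of them, so no cancellation question arises.
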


\begin{proof}
The proof is similar to Lemma~\ref{Counter1}. Without loss of generality, we may assume $2<d_1<d_2$. It suffices to consider operator $   M_{\chi_{(-\infty,-1]}}\sigma_{1}^\delta(\Lapd) M_{\chi_{[1,\infty)}} = \sigma_{\mathrm{off}}^{12,+}(1) - \sigma_{\mathrm{off}}^{12,-}(1)$ (up to constant). Next, by the argument from Lemma~\ref{Counter1}, we only need to treat $\mathcal{II}_{\pm}^1$, since the parts $\mathcal{I}_{L,\pm}^1$ and $\mathcal{I}_{\pm}^1$ are both bounded in $L^p$ for all $1<p<\infty$. Hence, by Lemma~\ref{lem:II-smallR}, we may focus on 
\begin{align*}
    \mathcal{II}_{\pm}^1(x,y)
&= |x|^{1-\frac{d_1}{2}}|y|^{1-\frac{d_2}{2}}
\int_0^1
(1-\phi(\lambda |x|))(1-\phi(\lambda |y|))(1-\lambda^2)^\delta
\lambda^{\frac{d_1+d_2}{2}-1}
K_{\nu_1}(\pm i\lambda |x|)K_{\nu_2}(\pm i\lambda |y|) d\lambda\\
&\sim\frac{|x|^{-\frac{d_1-1}{2}} |y|^{-\frac{d_2-1}{2}}}{(|x|+|y|)^{\delta+1}} e^{\mp i(|x|+|y|)} + \mathcal{R}(x,y),
\end{align*}
where $\mathcal{R}$ acts boundedly in $L^p$ for $p$ satisfying \eqref{BR2} and \eqref{BBR1} with $m=1$. Therefore, it follows by complex interpolation that 
% It follows by \eqref{remainderest} that the remainder term is bounded by (for $|x|\ge |y|$)
% \begin{align*}
%     |x|^{-\frac{d_2-1}{2}}|y|^{-\frac{d_1-1}{2}} e^{\mp i(|x|+|y|)} \mathcal{R}(x,y)\lesssim \begin{cases}
%         |x|^{-\frac{d_2+1}{2}-\delta-1} |y|^{-\frac{d_1-1}{2}}, & d_1+d_2\ge 6,\\
%         |x|^{-\frac{d_2+1}{2}-\delta-1} |y|^{-\frac{d_1-1}{2} - \frac{d_1+d_2-6}{2}}, & d_1+d_2< 6,
%     \end{cases}
% \end{align*}
% which acts as a bounded operator in $L^p$ for
% \begin{align}\label{BR2}
%     \frac{d_2}{d_2 \wedge \left( \frac{d_2+3}{2} + \delta \right)} < p < \frac{d_1}{0 \vee \left( \frac{d_1-3}{2}-\delta \right)}
% \end{align}
% and (see \eqref{remainder1} and \eqref{remainder2} above)
% \begin{align*}
% \delta \ge |d_1-d_2| \left| \frac{1}{p} - \frac{1}{2} \right|.
% \end{align*}
we are reduced to showing the main term of $\mathcal{II}_{\pm}^1$ is unbounded in $L^p$ for $p$ satisfying \eqref{BR2} and
\begin{equation*}
    D\left| \frac{1}{p} - \frac{1}{2} \right| - \frac{3}{2} < \delta \le D\left| \frac{1}{p} - \frac{1}{2} \right| - \frac{1}{2}
\end{equation*}
In other words, we need to verify the unboundedness of the non-zero linear combination of operators:
\begin{equation*}
    \mathcal{S}_{\pm}f(x) := \int_1^\infty \frac{x^{-\frac{d_1-1}{2}} y^{-\frac{d_2-1}{2}}}{(x+y)^{\delta+1}} e^{\mp i(x+y)} f(y) y^{d_2-1} dy
\end{equation*}
from $L^p\left([1,\infty), r^{d_2-1}dr\right)$ to $L^p\left([1,\infty), r^{d_1-1}dr\right)$, where $d_1<d_2$.

Since $\Lapd$ is self-adjoint, we may assume $p>2$. Consider function $f_\varepsilon(y) = e^{-iy} \mathbf{1}_{(2,\varepsilon)}(y) y^{-\frac{d_2}{p}}$ with $\varepsilon > 100$ large. It is clear that for $x\in [1,2]$,
\begin{align*}
    |\mathcal{S}_{-}f_\varepsilon(x)| &= x^{-\frac{d_1-1}{2}} \left|\int_2^{\varepsilon} \frac{y^{\frac{d_2-1}{2}-\frac{d_2}{p}}}{(x+y)^{\delta+1}}  dy\right|\\
    &\gtrsim  \int_2^{\varepsilon} y^{\frac{d_2-1}{2}-\frac{d_2}{p}-\delta-1}  dy\\
    &\gtrsim \begin{cases}
        \varepsilon^{\frac{d_2-1}{2}-\frac{d_2}{p}-\delta}, & \frac{d_2-1}{2}-\frac{d_2}{p}-\delta >0,\\
        \log{\varepsilon}, & \frac{d_2-1}{2}-\frac{d_2}{p}-\delta = 0,
    \end{cases}
\end{align*}
since the assumption gives
\begin{equation*}
    \delta \le D \left|\frac{1}{p}-\frac{1}{2}\right|-\frac{1}{2} = \frac{d_2-1}{2}-\frac{d_2}{p}.
\end{equation*}
On the other hand,
\begin{align*}
    |\mathcal{S}_+ f_\varepsilon(x)|
    &=
    x^{-\frac{d_1-1}{2}}
    \left|
    \int_2^\varepsilon
    \frac{y^{\frac{d_2-1}{2}-\frac{d_2}{p}}}{(x+y)^{\delta+1}}e^{2iy}dy
    \right|\\
    &=
    x^{-\frac{d_1-1}{2}}
    \left|
    \frac{y^{\frac{d_2-1}{2}-\frac{d_2}{p}}e^{2iy}}{(x+y)^{\delta+1}}\bigg|_{y=2}^{\varepsilon}
    -
    \int_2^\varepsilon
    e^{2iy}\frac{d}{dy}
    \left(
    \frac{y^{\frac{d_2-1}{2}-\frac{d_2}{p}}}{(x+y)^{\delta+1}}
    \right)dy
    \right|\\
    &\lesssim
    \varepsilon^{\frac{d_2-1}{2}-\frac{d_2}{p}-\delta-1}+1.
\end{align*}
Assume, for instance, that the coefficient of $\mathcal S_-$ in the principal linear combination is non-zero. If instead the non-zero coefficient is attached to $\mathcal S_+$, we replace $e^{-iy}$ by $e^{iy}$ in the definition of $f_\varepsilon$. Hence, for $\varepsilon$ sufficiently large,
\begin{equation*}
    \left\|\sum_{\pm}c_\pm \mathcal S_\pm f_\varepsilon\right\|_{L^p([1,\infty),x^{d_1-1}dx)}
    \gtrsim
    \begin{cases}
        \varepsilon^{\frac{d_2-1}{2}-\frac{d_2}{p}-\delta}, & \frac{d_2-1}{2}-\frac{d_2}{p}-\delta > 0,\\
        \log\varepsilon, & \frac{d_2-1}{2}-\frac{d_2}{p}-\delta = 0.
    \end{cases}
\end{equation*}
If $\sum_{\pm}c_\pm \mathcal S_\pm$ were bounded from $L^p([1,\infty),r^{d_2-1}dr)$ to $L^p([1,\infty),r^{d_1-1}dr)$, then we would have
\begin{equation*}
    \begin{cases}
        \varepsilon^{\frac{d_2-1}{2}-\frac{d_2}{p}-\delta}, & \frac{d_2-1}{2}-\frac{d_2}{p}-\delta>0,\\
        \log\varepsilon, & \frac{d_2-1}{2}-\frac{d_2}{p}-\delta=0
    \end{cases}
    \lesssim
    \|f_\varepsilon\|_{L^p([1,\infty),y^{d_2-1}dy)}
    \lesssim
    (\log\varepsilon)^{\frac{1}{p}},
\end{equation*}
which is impossible as $\varepsilon\to\infty$.

This completes the proof of Lemma~\ref{Counter2}.

\end{proof}

Proposition~\ref{propBRR} now follows from
Lemmas~\ref{Counter1} and~\ref{Counter2}. Together with the sufficiency
result in Theorem~\ref{thm:BRR}, this completes the proof of
Theorem~\ref{main_BR}.

\section{A H\"ormander-type spectral multiplier theorem}

We conclude with a H\"ormander-type spectral multiplier consequence of our
Bochner--Riesz estimates. Although the resulting condition is unlikely to be
sharp, estimates of the form \eqref{hor} are expected to hold more generally
on manifolds with ends; see \cite{CSY}. The argument is closely related in
spirit to the radial multiplier theorem of Carbery, Gasper and Trebels
\cite{CarberyGasperTrebels1984}.

For $s\geq 0$, define the standard Sobolev norm on $\mathbb R$  by
\[
  \|F\|_{W^{s,p}}
  =
  \left\|(I-d^2/dx^2)^{s/2}F\right\|_{p}.
\]

\begin{coro}\label{cor:Hormander-multiplier}
Let \(1<p<\infty\), and let \(F\) be a bounded Borel function satisfying
\[
    \supp F\subset[0,1]
    \qquad\text{and}\qquad
    F\in W^{s,1}(\mathbb R),
\]
where
\[
    s>
    \max\left\{
        D\left|\frac1p-\frac12\right|+\frac12,\,
        |d_1-d_2|
        \left|\frac1p-\frac12\right|+1
    \right\}.
\]
Then \(F(t\Lapd)\) is bounded on \(L^p\), uniformly in \(t>0\), and
\begin{equation}
    \label{hor}
    \sup_{t>0}
    \|F(t\Lapd)\|_{L^p\to L^p}
    \leq
    C\,
    \|F\|_{W^{s,1}(\mathbb R)}.
\end{equation}
\end{coro}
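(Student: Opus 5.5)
The plan is the standard subordination of a compactly supported multiplier to Bochner--Riesz means, as in Carbery--Gasper--Trebels. Write $F(\lambda)=G(\lambda)$ with $\supp G\subset[0,1]$. For $\alpha>0$ we use the Riesz/Weyl fractional derivative
\[
  F(\lambda)=\frac{1}{\Gamma(\alpha+1)}\int_0^\infty (R-\lambda)_+^{\alpha}\,(-\partial_R)^{\alpha+1}F(R)\,dR
  =\frac{1}{\Gamma(\alpha+1)}\int_0^\infty \Bigl(1-\frac{\lambda}{R}\Bigr)_+^{\alpha} R^{\alpha}\,(-\partial_R)^{\alpha+1}F(R)\,dR .
\]
This holds for $F$ smooth with compact support, with $(-\partial_R)^{\alpha+1}$ defined as the Weyl fractional integral of order $N-\alpha-1$ of $(-1)^N F^{(N)}$. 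Applying the spectral calculus with $\lambda=t\Lapd$ and writing $R=S^2/t$ gives
\[
  F(t\Lapd)=\frac{1}{\Gamma(\alpha+1)}\int_0^\infty \sigma^{\alpha}_{\sqrt{R/t}}(\Lapd)\,R^{\alpha}(-\partial_R)^{\alpha+1}F(R)\,dR .
\]
Taking $L^p$ norms under the integral and using Theorem~\ref{thm:BRR} with $\delta=\alpha$, we obtain
\[
  \sup_{t>0}\|F(t\Lapd)\|_{p\to p}\lesssim \int_0^\infty R^{\alpha}\bigl|(-\partial_R)^{\alpha+1}F(R)\bigr|\,dR ,
\]
provided $\alpha>\max(D|1/p-1/2|-1/2,0)$ and $\alpha\ge |d_1-d_2||1/p-1/2|$. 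Note that the bound in Theorem~\ref{thm:BRR} is uniform in the radius, so the dilation $t$ causes no loss.

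It remains to control the weighted $L^1$ norm by $\|F\|_{W^{s,1}}$ with $s=\alpha+1+\epsilon$. Since $\supp F\subset[0,1]$, the Weyl derivative $(-\partial_R)^{\alpha+1}F$ vanishes for $R>1$. Hence $R^\alpha\le 1$ on the relevant range, which reduces matters to $\|(-\partial)^{\alpha+1}F\|_{L^1(0,1)}$.

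The main technical point is bounding this one-sided fractional derivative in $L^1$ by a Bessel-potential norm $W^{s,1}$ with $s>\alpha+1$. Here $L^1$ endpoint issues mean we need a small $\epsilon$ loss, which is exactly why the hypothesis uses strict inequality. I would do this via a Littlewood--Paley decomposition $F=\sum_k \Delta_k F$. Each piece $\Delta_k F$ is frequency localized at $2^k$, and for such pieces the Weyl operator $(-\partial)^{\alpha+1}$ acts as convolution with an $L^1$ kernel of norm $\lesssim 2^{k(\alpha+1)}$. The kernel's tail at $-\infty$ is harmless, because the one-sided operator's symbol $(-i\xi)^{\alpha+1}$ is smooth away from the origin. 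This gives
\[
\|(-\partial)^{\alpha+1}F\|_{L^1}\lesssim\sum_k 2^{k(\alpha+1)}\|\Delta_kF\|_1\lesssim\|F\|_{W^{s,1}}
\]
for $s>\alpha+1$. The low-frequency part is handled using the compact support together with the factor $R^\alpha$. One further point: the localized pieces are no longer supported in $[0,1]$, so the restriction $R\le 1$ must be replaced by an estimate with the weight $R^\alpha$ on all of $(0,\infty)$. I would handle this by splitting $R\le 2$ from $R>2$, and on $R>2$ using the rapid decay of $(-\partial)^{\alpha+1}F$ away from $\supp F$, which is of order $R^{-\alpha-2}$.

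Choosing $\alpha$ slightly above the two thresholds, the Bochner--Riesz conditions become $s>D|1/p-1/2|+1/2$ and $s>|d_1-d_2||1/p-1/2|+1$. The $\max(\cdot,0)$ constraint gives $s>1$, which is implied. This yields \eqref{hor}. Finally, a density argument extends the result from smooth $F$ to general bounded Borel $F\in W^{s,1}$, using that $W^{s,1}\subset C$ for $s>1$.
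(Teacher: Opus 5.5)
Your proposal is correct and follows the paper's proof. You subordinate $F(t\Lapd)$ to the Bochner--Riesz means via the right-sided Weyl derivative of order $\alpha+1$, apply the uniform bound of Theorem~\ref{thm:BRR} with $\alpha$ chosen strictly between the Bochner--Riesz thresholds and $s-1$, and control $\int_0^\infty R^\alpha|(-\partial_R)^{\alpha+1}F(R)|\,dR$ by $\|F\|_{W^{s,1}}$, using a Littlewood--Paley sketch for the one-dimensional Sobolev estimate that the paper only asserts. The splitting $R\le 2$ versus $R>2$ is unnecessary: the Weyl derivative of $F$ itself vanishes identically on $(1,\infty)$, so it suffices to bound $\|(-\partial)^{\alpha+1}F\|_{L^1(\mathbb R)}\le\sum_k\|(-\partial)^{\alpha+1}\Delta_kF\|_{L^1(\mathbb R)}$.
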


\begin{proof}
For \(a\in\mathbb C\), let
\[
    \chi_-^a(x)
    :=
    \frac{x_-^a}{\Gamma(a+1)},
    \qquad
    x_-^a
    :=
    \begin{cases}
        |x|^a, & x\leq 0,\\
        0,     & x>0.
    \end{cases}
\]
Initially this defines a distribution for \(\Re a>-1\). The identity
\[
    \frac{d}{dx}\chi_-^a=-\chi_-^{a-1}
\]
extends \(a\mapsto\chi_-^a\) to an entire distribution-valued family.
Moreover,
\[
    \chi_-^w*\chi_-^z=\chi_-^{w+z+1},
    \qquad w,z\in\mathbb C;
\]
see \cite[(3.4.10)]{Hormander}. Define $\delta_-^\nu:=\chi_-^{-\nu-1}$
and, for \(F\) supported in \([0,\infty)\), define its Weyl fractional
derivative of order \(\nu\) by
\[
    F^{(\nu)}:=F*\delta_-^\nu.
\]
The convolution identity gives
\[
    F^{(\nu)}*\delta_-^{-\nu}=F.
\]
Consequently, for every \(\delta>-1\),
\[
\begin{aligned}
    F(\Lapd)
    &=
    \frac{1}{\Gamma(\delta+1)}
    \int_0^\infty
        F^{(\delta+1)}(r)
        (r-\Lapd)_+^\delta\,dr \\
    &=
    \frac{1}{\Gamma(\delta+1)}
    \int_0^\infty
        F^{(\delta+1)}(r)\,
        r^\delta
        \sigma_r^\delta(\Lapd)\,dr .
\end{aligned}
\]

Choose \(\delta\) so that
\[
    \max\left\{
        D\left|\frac1p-\frac12\right|-\frac12,\,
        |d_1-d_2|
        \left|\frac1p-\frac12\right|
    \right\}
    <\delta<s-1.
\]
The Bochner--Riesz theorem then yields
\[
    \sup_{r>0}
    \|\sigma_r^\delta(\Lapd)\|_{L^p\to L^p}
    <\infty.
\]
It follows that
\[
    \|F(\Lapd)\|_{L^p\to L^p}
    \lesssim
    \int_0^\infty
        |F^{(\delta+1)}(r)|\,r^\delta\,dr.
\]
Since \(\supp F\subset[0,1]\) and \(\delta+1<s\), the standard
one-dimensional Sobolev estimate gives
\[
    \int_0^\infty
        |F^{(\delta+1)}(r)|\,r^\delta\,dr
    \lesssim
    \|F\|_{W^{s,1}(\mathbb R)}.
\]
For dilation \(F_t(\lambda)=F(t\lambda)\) we get
$$
\int_0^\infty
\left|F_t^{(\delta+1)}(r)\right|r^\delta\,dr
=
\int_0^\infty
\left|F^{(\delta+1)}(u)\right|u^\delta\,du,
$$
which implies  the estimate uniformly for \(t>0\).
\end{proof}

\section{Open problems}\label{oppr}
We conclude by recording several directions suggested by the present model for future research.
\medskip 

{Q1. Analysis of the low-dimensional regime.}
In this paper, we restrict attention to the case \(d_i>2\). When \(d_i\leq 2\), the small-\(\lambda\) asymptotics of the special functions \(k_d(i\lambda)\) and their derivatives differ substantially from those in the range \(d_i>2\). Consequently, the low-energy analysis developed here must be modified. Treating all possible pairs of end dimensions would require several distinct versions of the argument. We therefore defer the cases \(1\leq d_i\leq 2\) to a subsequent paper.

{Q2. From the one-dimensional model to manifolds with ends.}
The operator \(\Delta_{d_1,d_2}\) provides a robust one-dimensional model for analysis on manifolds with ends. Our results suggest that, on genuine manifolds with ends of different volume-growth dimensions, Bochner--Riesz summability should be governed not only by the largest end dimension, but also by an additional obstruction arising from the interaction between unequal ends.

More precisely, it is natural to ask whether analogues of the necessary condition \eqref{asym} hold for connected sums or more general manifolds with ends in the sense of \cite{GS}. Our counterexamples suggest that such negative results should persist in the geometric setting, since they depend on the separation between ends and the mismatch in volume growth, rather than on specifically one-dimensional features.

The more difficult question is sufficiency. One would like to identify geometric and analytic hypotheses under which the conditions in Theorem~\ref{main_BR} remain sufficient for the \(L^p\)-boundedness of Bochner--Riesz means on manifolds with ends. This problem remains open. A radially symmetric model of \(\mathbb{R}^2\#\mathbb{R}^2\) may provide a more tractable first case, following the ideas developed in \cite{ST}.

{Q3.  Inverse-square potentials.}
Herz also considers variants involving an inverse-square potential, which can be reduced to operators of the form
\[
\Delta^{(0,\infty)}_{\mathrm{Neu},d} + c\,r^{-2}.
\]
It would be interesting to develop an analogue of our Bochner--Riesz theory for
$\Delta^{(0,\infty)}_{\mathrm{Neu},d} + c\,r^{-2}$ and for the corresponding two-end operators with such a potential on one or both ends. We plan to investigate the inverse-square potential case in a separate paper.

{Q4. H\"ormander-type spectral multipliers.}
We ask for a sharp version of Corollary~\ref{cor:Hormander-multiplier}. One expects a result formulated in terms of the Sobolev norm
\(\|F\|_{W^{s,2}}\), with the required differentiability order reduced by
\(1/2\); see \cite{CarberyGasperTrebels1984}. For an illuminating discussion
of the analogous problem for the harmonic oscillator, see \cite{Th89,PCWHAS}.

\bigskip

\noindent
{\bf Acknowledgements:}  PC was supported  by National Key R$\&$D Program of China 2022YFA1005700 and
by  NNSF of China No. 12171489.
AS and DH were 
 supported by Australian Research Council  Discovery Grant  DP200101065. 
LY was  supported  by National Key R$\&$D Program of China 2022YFA1005700  and
by  NNSF of China No. 12571111.

%\bibliographystyle{abbrv}
%\bibliography{name}

\begin{thebibliography}{10}
	
	
	\bibitem{AS}
	M.~Abramowitz and I.~A. Stegun.
	\newblock {\em Handbook of mathematical functions with formulas, graphs, and
		mathematical tables}, volume~55.
	\newblock US Government printing office, 1948.
	
	\bibitem{Bochner1936}
	S.~Bochner.
	\newblock Summation of multiple {F}ourier series by spherical means.
	\newblock {\em Trans. Am. Math. Soc.}, 40(2):175--207, 1936.
	
	\bibitem{Bourgain}
	J.~Bourgain.
	\newblock Besicovitch type maximal operators and applications to fourier
	analysis.
	\newblock {\em Geom. Funct. Anal.}, 1:147--187, 1991.
	
	\bibitem{CarberyGasperTrebels1984}
	A.~Carbery, G.~Gasper, and W.~Trebels.
	\newblock Radial {F}ourier multipliers of {$L\sp{p}({\bf R}\sp{2})$}.
	\newblock {\em Proc. Nat. Acad. Sci. U.S.A.}, 81(10):3254--3255, 1984.
	
	\bibitem{CaS}
	L.~Carleson and P.~Sj\"olin.
	\newblock Oscillatory integrals and a multiplier problem for the disc.
	\newblock {\em Studia Math.}, 44:287--299. (errata insert), 1972.
	
	\bibitem{PCWHAS}
	P.~Chen, W.~Hebisch, and A.~Sikora.
	\newblock Bochner-{Riesz} profile of anharmonic oscillator {{\(\mathcal{L} =
			-\frac{d^2}{dx^2} + |x|\)}}.
	\newblock {\em J. Funct. Anal.}, 271(11):3186--3241, 2016.
	
	\bibitem{COSY}
	P.~Chen, E.~M. Ouhabaz, A.~Sikora, and L.~Yan.
	\newblock Spectral multipliers without semigroup framework and application to
	random walks.
	\newblock {\em J. Math. Pures Appl. (9)}, 143:162--191, 2020.
	
	\bibitem{CSY}
	P.~Chen, A.~Sikora, and L.~Yan.
	\newblock Spectral multipliers via resolvent type estimates on non-homogeneous
	metric measure spaces.
	\newblock {\em Math. Z.}, 294(1-2):555--570, 2020.
	
	\bibitem{MiCh}
	M.~Christ.
	\newblock On almost everywhere convergence of {B}ochner-{R}iesz means in higher
	dimensions.
	\newblock {\em Proc. Amer. Math. Soc.}, 95(1):16--20, 1985.
	
	\bibitem{cordoba}
	A.~C\'ordoba.
	\newblock The {K}akeya maximal function and the spherical summation
	multipliers.
	\newblock {\em Amer. J. Math.}, 99(1):1--22, 1977.
	
	\bibitem{cord}
	A.~C\'ordoba.
	\newblock The disc multiplier.
	\newblock {\em Duke Math. J.}, 58(1):21--29, 1989.
	
	\bibitem{Er1956}
	A.~Erd\'elyi.
	\newblock {\em Asymptotic expansions}.
	\newblock Dover Publications, Inc., New York, 1956.
	
	\bibitem{Fefferman}
	C.~Fefferman.
	\newblock The multiplier problem for the ball.
	\newblock {\em Ann. of Math. (2)}, 94:330--336, 1971.
	
	\bibitem{Grad-Ry}
	I.~S. Gradshteyn and I.~M. Ryzhik.
	\newblock {\em Table of integrals, series, and products}.
	\newblock Elsevier/Academic Press, Amsterdam, seventh edition, 2007.
	\newblock Translated from the Russian, Translation edited and with a preface by
	Alan Jeffrey and Daniel Zwillinger, With one CD-ROM (Windows, Macintosh and
	UNIX).
	
	\bibitem{GIS}
	A.~Grigor'yan, S.~Ishiwata, and L.~Saloff-Coste.
	\newblock Geometric analysis on manifolds with ends.
	\newblock In {\em Analysis and partial differential equations on manifolds,
		fractals and graphs. Contributions of the conference, Tianjin, China,
		September 2019}, pages 325--343. Berlin: De Gruyter, 2021.
	
	\bibitem{GS}
	A.~Grigor'yan and L.~Saloff-Coste.
	\newblock Heat kernel on manifolds with ends.
	\newblock {\em Ann. Inst. Fourier}, 59(5):1917--1997, 2009.
	
	\bibitem{HLP}
	G.~H. Hardy, J.~E. Littlewood, and G.~P\'{o}lya.
	\newblock {\em Inequalities}.
	\newblock Cambridge Mathematical Library. Cambridge University Press,
	Cambridge, 1988.
	\newblock Reprint of the 1952 edition.
	
	\bibitem{HNS}
	A.~Hassell, D.~Nix, and A.~Sikora.
	\newblock Riesz transforms on a class of non-doubling manifolds {II}.
	\newblock {\em Indiana Univ. Math. J.}, 73(3):955--1003, 2024.
	
	\bibitem{HS1D}
	A.~Hassell and A.~Sikora.
	\newblock Riesz transforms in one dimension.
	\newblock {\em Indiana Univ. Math. J.}, 58(2):823--852, 2009.
	
	\bibitem{HS}
	A.~Hassell and A.~Sikora.
	\newblock Riesz transforms on a class of non-doubling manifolds.
	\newblock {\em Comm. Partial Differential Equations}, 44(11):1072--1099, 2019.
	
	\bibitem{HE_phd}
	D.~He.
	\newblock Analysis of the laplacian on a class of nondoubling connected sums
	and manifolds with quadratically decaying ricci curvature.
	\newblock {\em Bull. Aust. Math. Soc.}, 113(1):173–174, 2026.
	
	\bibitem{Herz}
	C.~S. Herz.
	\newblock On the mean inversion of {F}ourier and {H}ankel transforms.
	\newblock {\em Proc. Nat. Acad. Sci. U.S.A.}, 40:996--999, 1954.
	
	\bibitem{Hormander}
	L.~H\"ormander.
	\newblock {\em The analysis of linear partial differential operators. {I}}.
	\newblock Springer Study Edition. Springer-Verlag, Berlin, second edition,
	1990.
	\newblock Distribution theory and Fourier analysis.
	
	\bibitem{IoLi2014}
	A.~Iosevich and E.~Liflyand.
	\newblock {\em Decay of the {F}ourier transform}.
	\newblock Birkh\"auser/Springer, Basel, 2014.
	\newblock Analytic and geometric aspects.
	
	\bibitem{LeeS}
	S.~Lee and J.~Ryu.
	\newblock Bochner-{R}iesz means for the {H}ermite and special {H}ermite
	expansions.
	\newblock {\em Adv. Math.}, 400:Paper No. 108260, 52, 2022.
	
	\bibitem{MinicozziSogge}
	W.~P. Minicozzi and C.~D. Sogge.
	\newblock Negative results for nikodym maximal functions and related
	oscillatory integrals in curved space.
	\newblock {\em Math. Res. Lett.}, 4:221--237, 1997.
	
	\bibitem{Muck1}
	B.~Muckenhoupt.
	\newblock Mean convergence of {H}ermite and {L}aguerre series. {I}.
	\newblock {\em Trans. Amer. Math. Soc.}, 147:419--431;, 1970.
	
	\bibitem{Muck2}
	B.~Muckenhoupt.
	\newblock Mean convergence of {H}ermite and {L}aguerre series. {II}.
	\newblock {\em Trans. Amer. Math. Soc.}, 147:433--460;, 1970.
	
	\bibitem{Muckenhoupt}
	B.~Muckenhoupt.
	\newblock Weighted norm inequalities for the {H}ardy maximal function.
	\newblock {\em Trans. Amer. Math. Soc.}, 165:207--226, 1972.
	
	\bibitem{Nix}
	D.~Nix.
	\newblock The resolvent and riesz transform on connected sums of manifolds with
	different asymptotic dimensions.
	\newblock {\em Bull. Aust. Math. Soc.}, 104(2):344--345, 2021.
	
	\bibitem{SharmaSikoraThangavelu}
	H.~Sharma, A.~Sikora, and S.~Thangavelu.
	\newblock Bochner--riesz means for {L}aguerre expansions.
	\newblock Work in preparation, 2026.
	
	\bibitem{ST}
	A.~Sikora and T.~Tao.
	\newblock Bochner-{R}iesz summability for analytic functions on the
	{$m$}-complex unit sphere and for cylindrically symmetric functions on {$\Bbb
		R^{n-1}\times\Bbb R$}.
	\newblock {\em Comm. Anal. Geom.}, 12(1-2):43--57, 2004.
	
	\bibitem{chSo}
	C.~D. Sogge.
	\newblock Oscillatory integrals and spherical harmonics.
	\newblock {\em Duke Math. J.}, 53(1):43--65, 1986.
	
	\bibitem{Sogge}
	C.~D. Sogge.
	\newblock {\em Fourier integrals in classical analysis}, volume 105 of {\em
		Cambridge Tracts in Mathematics}.
	\newblock Cambridge University Press, Cambridge, 1993.
	
	\bibitem{Stein2}
	Stein.
	\newblock {\em Some problems in harmonic analysis, Proc. Symposia in Pure
		Math.}, volume~1.
	\newblock AMS publications, 1979.
	
	\bibitem{Stein}
	E.~M. Stein.
	\newblock {\em Harmonic analysis: real-variable methods, orthogonality, and
		oscillatory integrals}, volume~43 of {\em Princeton Mathematical Series}.
	\newblock Princeton University Press, Princeton, NJ, 1993.
	\newblock With the assistance of Timothy S. Murphy, Monographs in Harmonic
	Analysis, III.
	
	\bibitem{TT}
	T.~Tao.
	\newblock The {B}ochner-{R}iesz conjecture implies the restriction conjecture.
	\newblock {\em Duke Math. J.}, 96(2):363--375, 1999.
	
	\bibitem{Th89}
	S.~Thangavelu.
	\newblock Summability of {H}ermite expansions. {I}, {II}.
	\newblock {\em Trans. Amer. Math. Soc.}, 314(1):119--142, 143--170, 1989.
	
	\bibitem{wang2025}
	H.~Wang and J.~Zahl.
	\newblock Volume estimates for unions of convex sets, and the kakeya set
	conjecture in three dimensions, 2025.
		
	
\end{thebibliography}

\end{document}